\documentclass[11pt]{article}

\usepackage[a4paper]{geometry}
\usepackage{adjustbox}
\usepackage{amssymb}
\usepackage{amsmath,amsfonts,amsthm,amsopn}
\usepackage{latexsym}
\usepackage{mathrsfs}
\usepackage{xcolor}
\usepackage{algorithm}
\usepackage{algorithmic}
\usepackage{subcaption}
\usepackage{dsfont}
\usepackage{multirow}
\usepackage[overload]{empheq}
\usepackage{enumitem}
\usepackage{setspace}
\usepackage{url}
\usepackage{booktabs}
\usepackage{bm}
\usepackage{graphicx}
\usepackage{epstopdf}
\usepackage{ifpdf}
\usepackage[numbers]{natbib}
\usepackage{makecell}
\usepackage{placeins}
\usepackage{soul}
\usepackage{tikz}
\usetikzlibrary{decorations.pathreplacing,calligraphy,calc}
\usetikzlibrary{positioning}
\usetikzlibrary{shapes.geometric,arrows.meta}
\usepackage[colorlinks=true,
            linkcolor=blue,
            citecolor=blue,
            urlcolor=blue]{hyperref}
\usepackage{cleveref}

\AtBeginDocument{\graphicspath{{./}{../}}}
\ifpdf
  \DeclareGraphicsExtensions{.pdf,.png,.jpg,.eps}
\else
  \DeclareGraphicsExtensions{.eps}
\fi

\hypersetup{
  pdftitle={On the Benign optimization landscape of Quadratic Programs with Orthogonality Constraints and Its Application to Heteroscedastic Probabilistic PCA},
  pdfauthor={Peng Wang, Po Chen, Rujun Jiang, and Laura Balzano}
}

\newcommand{\beqa}{\begin{eqnarray}}
\newcommand{\eeqa}{\end{eqnarray}}
\newcommand{\beqas}{\begin{eqnarray*}}
\newcommand{\eeqas}{\end{eqnarray*}}
\newcommand{\ba}{\begin{array}}
\newcommand{\ea}{\end{array}}
\newcommand{\bi}{\begin{itemize}}
\newcommand{\ei}{\end{itemize}}

\newcommand{\RN}[1]{%
  \textup{\uppercase\expandafter{\romannumeral#1}}%
}

\newcommand{\dist}{\mathrm{dist}}

\theoremstyle{plain}
\newtheorem{theorem}{Theorem}
\newtheorem{lemma}{Lemma}
\newtheorem{corollary}{Corollary}
\newtheorem{proposition}{Proposition}
\newtheorem{fact}{Fact}

\theoremstyle{definition}
\newtheorem{definition}{Definition}

\theoremstyle{remark}

\crefname{theorem}{Theorem}{Theorems}
\crefname{lemma}{Lemma}{Lemmas}
\crefname{corollary}{Corollary}{Corollaries}
\crefname{proposition}{Proposition}{Propositions}
\crefname{definition}{Definition}{Definitions}
\crefname{assumption}{Assumption}{Assumptions}
\crefname{fact}{Fact}{Facts}
\crefname{figure}{Figure}{Figures}
\crefname{table}{Table}{Tables}

\newcommand{\email}[1]{\protect\href{mailto:#1}{#1}}

\def\bLambda{\bm{\Lambda}}

\def\bW{\bm{W}}

\def\E{\mathbb{E}}

\def\R{\mathbb{R}}
\def\S{\mathbb{S}}

\newcommand{\blk}{\mathrm{BlkD}}
\begin{document}

\title{Benign Landscape of Quadratic Programs with Orthogonality Constraints and Its Application to Heteroscedastic Probabilistic PCA\thanks{The first and second authors equally contribute to this work.}
}
\author{ 
Peng Wang\thanks{Department of Computer and Information Science, University of Macau, Macao SAR, China (\email{pengw@um.edu.mo}).}
\and Po Chen\thanks{School of Data Science, Fudan University, Shanghai, China (\email{chenp24@m.fudan.edu.cn}).}
\and Rujun Jiang\thanks{School of Data Science, Fudan University, Shanghai, China (\email{rjjiang@fudan.edu.cn}).}
\and Laura Balzano\thanks{Department of Electrical Engineering and Computer Science, University of Michigan, Ann Arbor, USA (\email{girasole@umich.edu}).}
}
\date{\today}
\maketitle
\begin{abstract}
  In this work, we study the optimization landscape of homogeneous quadratic programs with orthogonality constraints (QPOC) and apply the resulting theory to heteroscedastic probabilistic PCA (HePPCA). For QPOC, we establish a complete characterization of the benign optimization landscape by showing that every critical point is either a global maximizer or a strict saddle point. Our analysis builds on a closed-form characterization of the critical point set, from which we derive a necessary and sufficient condition for global optimality and show that every non-optimal critical point has a direction of positive curvature. As an application, we show that the population version of HePPCA is a special instance of QPOC and therefore has a benign optimization landscape; moreover, it satisfies local geodesic strong concavity near every global maximizer. We furthermore prove that, when the sample size is sufficiently large, the sample version of HePPCA inherits these favorable properties with high probability. Together with existing theory on avoiding strict saddle points, our results provide a theoretical justification for the observed local linear convergence of retraction-based optimization methods to global solutions for both QPOC and HePPCA. Finally, we present numerical experiments to corroborate our theoretical results.
\end{abstract}



\vspace{-0.1in}
\section{Introduction}\label{sec:intro}

In this work, we study the following homogeneous quadratic program with orthogonality constraints (QPOC):
\begin{align}\label{eq:QPOC}
    \max_{\bm Q \in \R^{d \times K}} F(\bm Q) := \mathrm{Tr}\left(\bm Q^T\bm A\bm Q \bm B \right)\qquad \mathrm{s.t.}\ \bm Q \in \mathrm{St}(d,K). 
\end{align}
Here, $\mathrm{St}(d,K) = \left\{\bm Q \in \R^{d\times K}: \bm Q^T\bm Q = \bm I_K \right\}$ denotes the Stiefel manifold with $d \ge K$, $\bm I_K$ is the $K \times K$ identity matrix, and $\bm A \in \R^{d\times d}$ and $\bm B \in \R^{K\times K}$ are symmetric matrices. This is a classical and important class of matrix optimization problems, which has been widely studied in applied mathematics, machine learning, and signal processing. It includes several well-known formulations as special cases, such as eigenvalue problems \cite{golub2013matrix}, generalized eigenvalue problems \cite{horn2012matrix}, and orthogonal relaxations of quadratic assignment problems \cite{lawler1963quadratic}.

Over the past few decades, Problem~\eqref{eq:QPOC} has been extensively studied, with particular focus on algorithm design and the analysis of its optimization properties. A variety of algorithms have been proposed for solving this problem by exploiting its structure, including Riemannian gradient descent \cite{boumal2023optimization,smith2014optimization}, projected gradient descent \cite{wen2013feasible}, and exact penalty function methods \cite{xiao2021solving,xiao2022class}, to name a few. These methods leverage the geometry of the Stiefel manifold or incorporate penalty terms to enforce the constraints, providing both theoretical convergence guarantees and strong empirical performance. An influential work \cite{liu2019quadratic} demonstrated that the \L{}ojasiewicz exponent at any critical point of Problem~\eqref{eq:QPOC} is $1/2$. This result implies that a broad class of first-order retraction-based methods for solving this problem enjoys local linear convergence to a critical point. Interestingly, empirical observations demonstrate that these methods often converge to a global maximizer of Problem~\eqref{eq:QPOC}, rather than merely a generic critical point (see \Cref{fig:1}). This naturally raises the question of whether the optimization landscape of Problem~\eqref{eq:QPOC} is benign, in the sense that each critical point is either a global maximizer or a strict saddle point (see \Cref{def:crit}). 

Our first contribution is to provide an affirmative answer to the above question by showing that Problem~\eqref{eq:QPOC} indeed has a benign optimization landscape. To this end, we first review the closed-form characterization of the critical point set in \cite{liu2019quadratic}. Building on this characterization, we identify a sufficient condition for global optimality (see \Cref{prop:global}). We then show that any critical point violating this condition is a strict saddle point by constructing a direction of positive curvature (see \Cref{prop:saddle}). Consequently, this condition is also necessary. These results yield a complete characterization of the optimization landscape (see \Cref{thm:land}). Combined with strict-saddle avoidance \cite{lee2019first} and the \L{}ojasiewicz exponent in \cite{liu2019quadratic}, this explains why retraction-based methods usually converge to a global maximizer at a locally linear rate.

Our second contribution is the application of the above results to the HePPCA problem \cite{gilman2025semidefinite,hong2021heppcat,wang2023estimation}. The HePPCA model extends the classical PCA model by allowing samples from different groups to have different noise variances, enabling more accurate modeling of data with heterogeneous variability across groups (see \eqref{eq:HPCA}). We study the maximum likelihood estimation problem of the HePPCA model, as formulated in Problem~\eqref{eq:QPOC1}. Under the HePPCA model assumptions, we show that the population version of Problem~\eqref{eq:QPOC1} is a special instance of Problem~\eqref{eq:QPOC} and therefore admits a benign optimization landscape. Moreover, we prove that it is locally geodesically strongly concave on the Stiefel manifold near every global maximizer. Building on this, we conduct a perturbation analysis and show that, when the sample size is sufficiently large, Problem~\eqref{eq:QPOC1} admits the same properties as its population counterpart with high probability (see \Cref{thm:theorem5}). This result is significant, as it shows that retraction-based optimization methods converge to global solutions of the HePPCA problem at a locally linear rate. 

\subsection{Related Work}\label{sec:related} 


In non-convex optimization, a common approach to understanding the behavior of optimization algorithms is through the analysis of the optimization landscape 
\cite{achour2024loss,chen2025complete,chi2019nonconvex,li2019symmetry,sun2018geometric,pmlr-v235-wang24as}. By characterizing the landscape, one can identify different types of critical points, including local minimizers, global minimizers, strict saddle points, and non-strict saddle points (see \Cref{def:crit}). 
In particular, a non-convex problem is said to admit a {\em benign optimization landscape} if every critical point is either a strict saddle point or a global minimizer.\footnote{This definition is stated for minimization problems. In our setting, we consider a maximization problem, and the definition is adapted accordingly.} Typical examples of such problems include neural collapse problems \cite{yaras2022neural,zhu2021geometric}, low-rank matrix recovery \cite{Bhojanapalli2016,ge2017no}, and group synchronization \cite{mcrae2025nonconvex,mcrae2024benign}. Moreover, the foundational work \cite{lee2019first} showed that first-order methods, including gradient descent, manifold gradient descent, block coordinate descent, and their variants, avoid strict saddle points for almost all initializations. Consequently, for a problem with a benign landscape, randomly initialized first-order methods almost always converge to globally optimal points.

We next review recent advances in the analysis of optimization landscapes for smooth problems over manifolds. McRae and Boumal \cite{mcrae2024benign} studied the Burer-Monteiro relaxation of the orthogonal group synchronization problem, which maximizes a quadratic problem over the Stiefel manifold. They proved that it has a benign optimization landscape when the relaxation rank is larger than a threshold. Ling \cite{ling2023solving,ling2025local} studied the same relaxation problem and showed that it possesses a unique local minimizer, which is also global under mild conditions. Recently, McRae et al. \cite{mcrae2025nonconvex} derived a sufficient condition under which the Burer-Monteiro relaxation of the $Z_2$ synchronization problem, which lies on the oblique manifold, has a benign optimization landscape under mild conditions on the measurement graph and noise. Yaras et al. \cite{yaras2022neural} studied the cross-entropy loss for optimizing two-layer linear networks over the oblique manifold and showed that this problem has a benign optimization landscape. 

Now we turn to literature on the heteroscedastic PCA problem. HePPCA extends classical PCA by incorporating heterogeneous probabilistic priors into the dimensionality reduction framework. Hong et al. \cite{hong2021heppcat} developed a HePPCA model that accounts for heteroscedastic noise across samples and derived the corresponding maximum likelihood estimation formulation. Based on this, they proposed efficient alternating maximization algorithms to jointly estimate both underlying factors and noise variances. Gilman et al. \cite{gilman2025semidefinite} considered the setting where the noise variances for each data group are known and formulated  HePPCA as a sum of heterogeneous quadratic functions on the Stiefel manifold. They studied a semidefinite relaxation of this problem and established a certificate for global optimality. Recently, Wang et al. \cite{wang2023estimation} proposed a generalized power method for solving the same formulation and provided estimation guarantees based on an error-bound property. 

Beyond the aforementioned line of research, many other works have also explored different aspects and formulations of HePPCA. Zhang et al. \cite{zhang2022heteroskedastic} proposed a HeteroPCA algorithm for principal component analysis under heteroscedastic noise across features (as opposed to heteroscedasticity across samples) and proved its optimality under a generalized spiked covariance model via robust subspace perturbation analysis. Building on this work, Yan et al. \cite{yan2024inference} developed non-asymptotic distributional guarantees for HeteroPCA and demonstrated that it enables accurate inference for the principal subspace, even in the presence of missing data. For heteroscedasticity across samples, further extensions and algorithmic developments include the ALPCAH method, which addresses sample-wise heteroscedastic PCA with robust subspace estimation \cite{cavazos2023alpcah}; streaming HePPCA, which is designed for online or sequential data with heteroscedastic noise \cite{gilman2023streaming}; and the HeMPPCAT mixture model, which generalizes HePPCA to mixtures of probabilistic PCA components with heterogeneous noise across components \cite{xu2023hemppcat}.

\vspace{-0.05in}
\subsection{Notation and Organization} We use $\S^n$ to denote the set of all $n\times n$ symmetric matrices. For a positive integer $n$, let $[n] := \{1,\dots,n\}$. For a finite set $\mathcal S$, we use $|\mathcal S|$ to denote its cardinality. Given scalars $a_1,\dots,a_n$, we define $a_{\max} := \max\{a_i: i \in [n]\}$ and $a_{\min} := \min\{a_i: i \in [n]\}$. Given a vector $\bm{a}$, let $\|\bm{a}\|$ denote its Euclidean norm, $a_i$ its $i$-th entry, and $\mathrm{diag}(\bm{a})$ the diagonal matrix with $\bm{a}$ on its diagonal. For a matrix $\bm{A}$, we define $\|\bm{A}\|$ as its spectral norm, $\|\bm{A}\|_F$ as its Frobenius norm, $a_{ij}$ or $\bm A(i,j)$ as its $(i,j)$-th entry, and $\sigma_i(\bm{A})$ as its $i$-th largest singular value. For an integer $n$, let $\mathcal{P}^{n}$ denote the set of all $n \times n$ permutation matrices. 
We associate each $\bm\Pi \in \mathcal{P}^{n}$ with a permutation $\pi : [n] \rightarrow [n]$ through the convention that 
$\bm\Pi(i,j)=1$ if $j=\pi(i)$ and $\bm\Pi(i,j)=0$ otherwise for all $i,j\in[n]$. Under this convention, \(\bm \Pi \bm e_i = \bm e_{\pi^{-1}(i)}\), \(\bm \Pi^T \bm e_i = \bm e_{\pi(i)}\), and, for any diagonal matrix \(\bm D\), \((\bm \Pi^T \bm D\bm \Pi)(i,i)=\bm D(\pi^{-1}(i),\pi^{-1}(i))\).
We use $\mathcal{O}^{n}$ to denote the set of $n \times n$ orthogonal matrices. The symbol $\bm a \circ \bm b$ denotes the Hadamard product of two vectors $\bm a, \bm b \in \R^n$, i.e., $\bm a \circ \bm b=(a_1b_1,\dots,a_nb_n)$. We denote by $\mathrm{BlkD}(\bm{X}_1, \dots, \bm{X}_n)$ the block diagonal matrix with diagonal blocks $\bm{X}_1, \dots, \bm{X}_n$. Blocks with zero rows or zero columns are allowed and are simply omitted from displayed block matrices. In particular, $\bm X_i\in\mathbb R^{m_i\times 0}$ contributes only $m_i$ rows and $\bm X_i\in\mathbb R^{0\times n_i}$ contributes only $n_i$ columns. 

Given $\bm Q \in \mathrm{St}(d,K)$, the tangent space to $\mathrm{St}(d,K)$ at $\bm Q$ is $$\mathrm{T}(\bm Q) = \left\{\bm X \in \R^{d\times K}: \mathrm{sym}(\bm Q^T\bm X) = \bm 0  \right\},$$
where $\mathrm{sym}(\bm C):= ({\bm C+\bm C^T})/{2}$. According to \cite[page 162]{boumal2023optimization}, the projection onto the tangent space of $\mathrm{St}(d,K)$ is $\mathrm{Proj}_{\mathrm{T}(\bm Q)}(\bm C)=\bm C-\bm Q\mathrm{sym}(\bm Q^T\bm C)$. Given a smooth function $F:\R^{d \times K} \to \R$, its Riemannian gradient on the Stiefel manifold at $\bm Q$ can be calculated as
\begin{align*}
    \mathrm{grad}\ F(\bm Q) &=  \mathrm{Proj}_{\mathrm{T}(\bm Q)}(\nabla F(\bm Q)). 
\end{align*}
The Riemannian Hessian at $\bm Q$ along the direction $\bm D\in \mathrm{T}(\bm Q)$ can be calculated as 
\begin{align}\label{eq:Re He}
    \mathrm{Hess}\ F(\bm Q)[\bm D] &=\mathrm{Proj}_{\mathrm{T}(\bm Q)}\left(\nabla^2 F(\bm Q)[\bm D] - \bm D \mathrm{sym}(\bm Q^T \nabla F(\bm Q))\right)
\end{align}
We also write $\mathrm{Hess}\ F(\bm Q)[\bm D,\bm D]
:= \left\langle \bm D, \mathrm{Hess}\ F(\bm Q)[\bm D]\right\rangle$.

Now, we formally introduce the definitions of global maximizers, local maximizers, critical points, and strict saddle points for the problem of maximizing a smooth function $H(\bm Q)$ over the Stiefel manifold: 
\begin{align}\label{eq:mani}
    \max_{\bm Q \in \R^{d\times K}} H(\bm Q)\qquad \mathrm{s.t.}\ \bm Q\in \mathrm{St}(d,K). 
\end{align} 
\begin{definition}\label{def:crit}
    For Problem \eqref{eq:mani}, we say that \\
(i) $\bm Q^* \in \mathrm{St}(d,K)$ is a global maximizer if $H(\bm Q^*) \ge H(\bm Q)$ for all $\bm Q \in \mathrm{St}(d,K)$. \\
(ii) $\bm Q^*$ is a local maximizer if there exists a neighborhood $\cal U$ of $\bm Q^*$ such that $H(\bm Q^*) \ge H(\bm Q)$ for all $\bm Q \in \mathcal{U}\cap \mathrm{St}(d,K)$. \\
(iii) $\bm Q^* \in \mathrm{St}(d,K)$ is a critical point if $\mathrm{grad}\ H(\bm Q^*) = \bm 0$. \\
(iv) $\bm Q^* \in \mathrm{St}(d,K)$ is a strict saddle point\footnote{For maximization problems, a direction of positive curvature is an escaping direction \cite{lee2019first}. Note that this definition of strict saddle points therefore includes minima.} if it is a critical point and \\
$\mathrm{Hess}\ H(\bm Q^*)[\bm D,\bm D] > 0$ for a nonzero direction $\bm D\in \mathrm{T}(\bm Q^*)$.  
\end{definition}

The rest of this paper is organized as follows. In \Cref{sec:pre}, we review existing closed-form characterizations of the critical point set of Problem~\eqref{eq:QPOC}. In \Cref{sec:QPOC}, we present the optimization landscape result for Problem~\eqref{eq:QPOC}. In \Cref{sec:PPCA}, we extend these results to study the optimization landscape of the HePPCA problem. Finally, \Cref{sec:exp} presents numerical experiments, and \Cref{sec:con} concludes the paper.

\section{Preliminaries}\label{sec:pre}
 
In this section, we present the problem setup and review a key result from \cite{liu2019quadratic} that gives a closed-form characterization of the critical point set of Problem~\eqref{eq:QPOC} when $\bm B$ is nonsingular. We then extend it to the singular case. 

To proceed, let $\bm A = \bm U_A \bm \Lambda_A \bm U_A^T $ and $\bm B = \bm U_B \bm \Lambda_B \bm U_B^T $ be spectral decompositions of $\bm A \in \S^d$ and $\bm B \in \S^{K}$, respectively, where $\bm U_A \in \mathcal{O}^d$ and $\bm U_B \in \mathcal{O}^K$. It is straightforward to verify that $ \mathrm{Tr}\left( \bm Q^T \bm A \bm Q \bm B \right) = \mathrm{Tr} \left( \bar{\bm Q}^T \bm \Lambda_A \bar{\bm Q} \bm \Lambda_B \right) $, where $ \bar{\bm Q} = \bm U_A^T \bm Q \bm U_B \in \mathrm{St}(d,K)$. The map $\bm Q\mapsto \bm U_A^T\bm Q\bm U_B$ is a bijective isometry of the Stiefel manifold under the embedded Euclidean metric. Therefore, we assume without loss of generality that
\begin{equation}\label{eq:simpleAB}
\bm A = \mathrm{diag}(a_1, \ldots, a_d),\quad \bm B = \mathrm{diag}(b_1, \ldots, b_K),
\end{equation}
where $ a_1 \geq a_2 \geq \cdots \geq a_d$ and $ b_1 \geq b_2 \geq \cdots \geq b_K$. 
For ease of exposition, we denote the critical point set of Problem~\eqref{eq:QPOC} by
$\mathcal{Q} := \left\{ \bm Q \in \mathrm{St}(d,K): \mathrm{grad} F(\bm Q) = \bm 0\right\}.$
By \cite[Proposition 1]{liu2019quadratic}, this set can be written as
\begin{align}\label{eq:crit set}
    \mathcal{Q} = \left\{ \bm Q \in \mathrm{St}(d,K): \bm A\bm Q \bm B - \bm Q \bm B\bm Q^T\bm A\bm Q = \bm 0\right\}.
\end{align} 

\paragraph{Nonsingular $\bm B$}
 We first consider the case where $\bm B$ is nonsingular, i.e., $ b_i \neq 0 $ for each $i \in [K]$. In this case, Liu et al. \cite{liu2019quadratic} derived a closed-form expression for $\mathcal{Q}$. Specifically, let $n_A$ and $ n_B $ be the numbers of distinct eigenvalues of $\bm A$ and $\bm B$, respectively. We assume that $ n_A \geq 2 $, as the objective function is a constant when $n_A = 1$ and all feasible points are trivially optimal.  
Then, there exist indices $ s_0, s_1, \ldots, s_{n_A} $ and $ t_0, t_1, \ldots, t_{n_B} $ that partition the eigenvalues into groups of equal value, such that $ 0 = s_0 < s_1 < \cdots < s_{n_A} = d $, $ 0 = t_0 < t_1 < \cdots < t_{n_B} = K $, and 
 \begin{align*}
&a_{s_0+1} = \cdots = a_{s_1} > a_{s_1+1} = \cdots = a_{s_2} > \cdots > a_{s_{n_A-1}+1} = \cdots = a_{s_{n_A}}, \\ 
&b_{t_0+1} = \cdots = b_{t_1} > b_{t_1+1} = \cdots = b_{t_2} > \cdots > b_{t_{n_B -1}+1}=\cdots=b_{t_{n_B}}.
\end{align*}   
This, together with \eqref{eq:simpleAB}, allows us to express $ \bm A \in \S^d $ and $ \bm B \in \S^K$ as 
\begin{align}
&\bm A = \operatorname{BlkD}\left(a_{s_1} \bm I_{s_1-s_0}, \ldots, a_{s_{n_A}} \bm I_{s_{n_A}-s_{n_A-1}}\right) \in \S^d,\label{eq:block a}\\
&\bm B= \operatorname{BlkD}\left(b_{t_1} \bm I_{t_1-t_0}, \ldots, b_{t_{n_B}} \bm I_{t_{n_B}-t_{n_B-1}}\right) \in \S^K, \label{eq:block b}
\end{align}
respectively. We work with this form as opposed to \eqref{eq:simpleAB} in order to handle rotational invariance that arises when eigenvalues have multiplicity greater than 1. Define
\begin{equation*}
\mathcal{H} := \left\{ (h_1, \ldots, h_{n_A}): \sum_{i=1}^{n_A} h_i = K, \; h_i \in \{0, 1, \ldots, s_i - s_{i-1}\},\ i \in [n_A] \right\}.
\end{equation*}
In addition, given any $ \bm h = (h_1, \ldots, h_{n_A}) \in \mathcal{H} $, define
\begin{align*}
\bm E_i(\bm h) &:= \left[\bm e_{s_{i-1}+1} \cdots \bm e_{s_{i-1}+h_i}\right] \in \mathbb{R}^{d \times h_i} \quad \text{for } i = 1, \ldots, n_A, \nonumber \\
\bm E(\bm h) &:= \left[\bm E_1(\bm h), \cdots, \bm E_{n_A}(\bm h) \right] \in \mathbb{R}^{d \times K},
\end{align*}
where $ \{\bm e_i\}_{i=1}^d $ is the standard basis of $ \mathbb{R}^d $.\footnote{By convention, $h_i = 0$ means that $\bm E_i(\bm h)$ is not included in $\bm E(\bm h)$ for each $i \in [n_A]$.} Intuitively, \(\bm h\) records how many basis vectors are selected from each
eigenspace of \(\bm A\). The matrix \(\bm E(\bm h)\) acts as a selector: for any
compatible matrix \(\bm W\), the product \(\bm W\bm E(\bm h)\) keeps the columns of
\(\bm W\) indexed by these selected basis vectors. 
According to \cite{liu2019quadratic}, the critical point set of Problem~\eqref{eq:QPOC} admits the following closed-form expression.  
\begin{fact}(\cite[Proposition 3]{liu2019quadratic})\label{fact:1}
    Suppose that $\bm A \in \S^d$ and $\bm B \in \S^K$ are given by \eqref{eq:block a} and \eqref{eq:block b}, respectively, and that $b_i \neq 0$ for each $i \in [K]$. Then the critical point set $\mathcal{Q}$ can be expressed as
    \begin{align*}
    \mathcal{Q} = \bigcup_{\bm h \in \mathcal{H}, \bm \Pi \in \mathcal{P}^K} \mathcal{Q}_{\bm h,\bm \Pi},
    \end{align*}
    where every $\bm Q \in \mathcal{Q}_{\bm h,\bm \Pi}$ can be written as
    \begin{equation*}
    \bm Q = \mathrm{BlkD}(\bm U_1, \ldots,\bm U_{n_A})  \bm E(\bm h)  \bm\Pi  \mathrm{BlkD}\left(\bm V_1^T, \ldots, \bm V_{n_B}^T\right)
    \end{equation*}
    for some $\bm U_i \in \mathcal{O}^{s_i - s_{i-1}}$ for each $i \in [n_A]$ and $\bm V_j \in \mathcal{O}^{t_j - t_{j-1}}$ for each $j \in [n_B]$.
\end{fact}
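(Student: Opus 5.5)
We start from the algebraic description \eqref{eq:crit set}: $\bm Q\in\mathcal Q$ if and only if $\bm A\bm Q\bm B=\bm Q\bm B\bm Q^T\bm A\bm Q$. The proof has two directions.

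\emph{Sufficiency.} Let $\bm Q$ have the displayed form. Since $\bm U:=\mathrm{BlkD}(\bm U_1,\ldots,\bm U_{n_A})$ commutes with $\bm A$ and $\bm V:=\mathrm{BlkD}(\bm V_1,\ldots,\bm V_{n_B})$ commutes with $\bm B$, conjugating reduces the criticality equation to the case $\bm Q=\bm E(\bm h)\bm\Pi$. For this matrix,
\begin{align*}
\bm A\bm E(\bm h)=\bm E(\bm h)\bm D,
\end{align*}
where $\bm D$ is a diagonal $K\times K$ matrix. Hence $\bm Q^T\bm A\bm Q=\bm\Pi^T\bm D\bm\Pi$ is diagonal. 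The equation then becomes $\bm E(\bm h)\bm D\bm\Pi\bm B=\bm E(\bm h)\bm\Pi\bm B\bm\Pi^T\bm D\bm\Pi$. This holds because diagonal matrices commute after conjugation by $\bm\Pi$.

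\emph{Necessity.} This is the main work. Let $\bm Q\in\mathcal Q$ and set $\bm M:=\bm Q^T\bm A\bm Q\in\S^K$.

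\textbf{Step 1.} Multiply the criticality equation on the left by $\bm Q^T$ to get $\bm M\bm B=\bm B\bm M$. Thus $\bm M$ is block diagonal with respect to the eigenspace partition $\{t_j\}$ of $\bm B$. Diagonalize each block, $\bm M_j=\bm V_j\bm\Lambda_j\bm V_j^T$, and replace $\bm Q$ by $\bm Q\bm V$ with $\bm V=\mathrm{BlkD}(\bm V_1,\ldots,\bm V_{n_B})$. Since $\bm V$ commutes with $\bm B$, we may assume $\bm M=\bm\Lambda$ is diagonal.

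\textbf{Step 2.} The equation now reads $\bm A\bm Q\bm B=\bm Q\bm B\bm\Lambda=\bm Q\bm\Lambda\bm B$. Here we use that $\bm B$ is nonsingular: right-multiplying by $\bm B^{-1}$ gives $\bm A\bm Q=\bm Q\bm\Lambda$. So each column $\bm q_k$ of $\bm Q$ is an eigenvector of $\bm A$, with eigenvalue $\lambda_k\in\{a_{s_1},\ldots,a_{s_{n_A}}\}$.

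\textbf{Step 3.} Group the columns by eigenvalue. Let $h_i$ be the number of columns with eigenvalue $a_{s_i}$. These columns are orthonormal vectors in the $i$-th eigenspace of $\bm A$, which is spanned by $\bm e_{s_{i-1}+1},\ldots,\bm e_{s_i}$. Therefore $h_i\le s_i-s_{i-1}$, $\sum_i h_i=K$, and so $\bm h\in\mathcal H$. Moreover, the orthonormal columns in block $i$ can be completed to an orthogonal $\bm U_i\in\mathcal O^{s_i-s_{i-1}}$ whose first $h_i$ columns are exactly those vectors, written in local coordinates. Choose $\bm\Pi\in\mathcal P^K$ to reorder the columns of $\bm Q$ so that those lying in eigenspace $1$ come first, then eigenspace $2$, and so on. This gives $\bm Q=\mathrm{BlkD}(\bm U_1,\ldots,\bm U_{n_A})\bm E(\bm h)\bm\Pi$, up to the direction convention for $\bm\Pi$, which is fixed by the stated rule $\bm\Pi^T\bm e_i=\bm e_{\pi(i)}$. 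Undoing the rotation from Step 1 multiplies on the right by $\bm V^T$, which yields exactly the claimed form.

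The main obstacle is Step 2, which needs invertibility of $\bm B$. When some $b_j=0$, the corresponding columns of $\bm Q$ are not forced to be eigenvectors of $\bm A$, and the closed form fails. This is why the singular case is treated separately in the paper. The remaining difficulty is bookkeeping: checking the permutation convention and choosing the eigen-rotations $\bm V_j$ consistently across repeated eigenvalues of $\bm B$.
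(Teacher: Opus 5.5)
Your proof is correct and complete. Sufficiency reduces to $\bm Q=\bm E(\bm h)\bm\Pi$ via $\bm U^T\bm A\bm U=\bm A$ and $\bm V^T\bm B\bm V=\bm B$, and necessity follows the chain $\bm M\bm B=\bm B\bm M$, then blockwise diagonalization, then $\bm A\bm Q=\bm Q\bm\Lambda$ via $\bm B^{-1}$, then grouping columns by eigenspaces of $\bm A$; each step goes through. The worry about the permutation convention is moot, since the union ranges over all of $\mathcal P^K$.

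The paper does not prove this Fact itself; it imports it from Liu et al. Your argument is therefore a valid self-contained derivation rather than a match to an in-paper proof. You also correctly isolate where nonsingularity of $\bm B$ is used, which is why the paper treats singular $\bm B$ separately in \Cref{thm:singular-case} by applying this Fact to the nonzero block of $\bm B$.
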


\paragraph{Singular \(\bm B\)}
We now extend the above result to the case where \(\bm B\) is singular but nonzero, and derive a closed-form expression for \(\mathcal Q\). With a slight abuse of notation, let $n_B$ denote the number of distinct nonzero eigenvalues of $\bm B$. After reordering the diagonal blocks, we write the zero block last and set $t_{n_B}=\operatorname{rank}(\bm B)$. Similar to the nonsingular case, there exist indices $t_0,t_1,\ldots,t_{n_B},t_{n_B+1}$ and $p \in \{0,1,\dots,n_B\}$ such  that $0=t_0<t_1<\cdots <t_{n_B}<t_{n_B+1}=K$ and 
\begin{equation*}
\begin{aligned}
    & b_{t_0+1}=\cdots=b_{t_1}>\cdots>b_{t_{p-1}+1}=\cdots=b_{t_p}>0,\\
    & 0 > b_{t_p+1}=\cdots=b_{t_{p+1}}>\cdots>b_{t_{n_B-1}+1}=\cdots=b_{t_{n_B}},\\
    &\ b_{t_{n_B}+1}=\cdots=b_{t_{n_B+1}}=0.
\end{aligned}
\end{equation*}
In particular, $p=0$ (resp., $p=n_B$) indicates that there does not exist an index $i$ satisfying $b_i > 0$ (resp., $b_i < 0$). This partition differs from the decreasing-order partition only in that all zero entries are placed in the last block; either sign part may be empty.
Therefore, we have 
\begin{align}\label{eq:block b1}
    \bm B = \mathrm{BlkD}(\widehat{\bm B},\bm 0_{K-t_{n_B}}),\ \ \widehat{\bm B} = 
    \mathrm{BlkD}\left( b_{t_1} \bm I_{t_1-t_0},\dots,b_{t_{n_B}} \bm I_{t_{n_B}-t_{n_B-1}} \right).
    \end{align}
Substituting this form into \eqref{eq:crit set}, together with the argument in \cite[Section 3.1.4]{liu2019quadratic}, yields that the critical point set $\mathcal{Q}$ of Problem~\eqref{eq:QPOC} is equivalent to 
\begin{align}
\label{eq:crit 1}
    \mathcal{Q}
    &=
    \left\{\bm Q=[\bm Q_1,\bm Q_2]\in\mathrm{St}(d,K):
    \bm Q_1\in\mathcal{Q}_1\right\},\\
    \mathcal{Q}_1
    &:=
    \left\{\bm Y\in\mathrm{St}(d,t_{n_B}):
    \bm A\bm Y\widehat{\bm B}
    -\bm Y\widehat{\bm B}\bm Y^T\bm A\bm Y=\bm0
    \right\}.
    \notag
\end{align}
Now, we derive an explicit expression for $\cal Q$ based on \Cref{fact:1}. Define
\begin{align*}
\widetilde{\mathcal{H}} := \left\{ (\bm h_1,\bm h_2):
\begin{array}{l}
\bm h_1=(h_{1,1},h_{1,2},\ldots,h_{1,n_A}),\ 
\bm h_2 = (h_{2,1},h_{2,2},\ldots,h_{2,n_A}), \\
h_{1,i}\in \{0,1,\ldots,s_i-s_{i-1}\},\ i \in [n_A],\
\sum_{i=1}^{n_A} h_{1,i}=t_{n_B}, \\
h_{2,i} = s_i-s_{i-1}-h_{1,i},\ i \in [n_A]
\end{array}
\right\}.
\end{align*}
In addition, given any $\bm h =(\bm h_1,\bm h_2)\in \widetilde{\mathcal{H}}$, define
\begin{align}
&\bar{\bm E}_i(\bm h_1) 
:= \left[\bm e_{s_{i-1}+1} \ \cdots\ \bm e_{s_{i-1}+h_{1,i}}\right] 
\in \mathbb{R}^{d \times h_{1,i}}\ \text{for each}\ i \in [n_A] \notag \\
&\widehat{\bm E}_i(\bm h_2) 
:= \left[\bm e_{s_{i-1}+h_{1,i}+1} \ \cdots\ \bm e_{s_{i}}\right] 
\in \mathbb{R}^{d \times h_{2,i}}\ \text{for each}\ i \in [n_A], \notag\\
& \bar{\bm E}(\bm h_1) := \left[\bar{\bm E}_1(\bm h_1)\ \cdots\ \bar{\bm E}_{n_A}(\bm h_1)\right] \in \R^{d \times t_{n_B}},\label{eq:Eh1}\\
& \widehat{\bm E}(\bm h_2) := \left[\widehat{\bm E}_1(\bm h_2) \ \cdots\ \widehat{\bm E}_{n_A}(\bm h_2)\right] \in \R^{d \times (d-t_{n_B})}, \notag\\
& \bm E(\bm h) 
:= \left[\bar{\bm E}(\bm h_1)\ \widehat{\bm E}(\bm h_2)\right]
\in \mathbb{R}^{d \times d}.
\label{eq:basedef}
\end{align}
To illustrate this construction, we provide a schematic illustration of the matrix $\bm E(\bm h)$ in Figure \ref{fig:blockplot}.

\begin{figure}[t]
    \centering
    \makebox[\textwidth][c]{
    \begin{tikzpicture}[
        scale=0.85, transform shape,
        every node/.style={scale=0.85},
        brace/.style={decorate, decoration={calligraphic brace, amplitude=4pt}, thick},
        braceMirror/.style={decorate, decoration={calligraphic brace, amplitude=4pt, mirror}, thick},
        smallbrace/.style={decorate, decoration={calligraphic brace, amplitude=2.5pt}, thick},
        smallbraceMirror/.style={decorate, decoration={calligraphic brace, amplitude=2.5pt, mirror}, thick},
        guideline/.style={densely dotted, gray, thin}
    ]

    \pgfmathsetmacro{\TotalW}{14.0}
    \pgfmathsetmacro{\SplitX}{7.5}

    \pgfmathsetmacro{\TotalH}{5.60}
    \pgfmathsetmacro{\yTop}{5.60}
    \pgfmathsetmacro{\yOne}{4.45}
    \pgfmathsetmacro{\yTwo}{3.55}
    \pgfmathsetmacro{\yThree}{2.40}
    \pgfmathsetmacro{\yFour}{1.15}
    \pgfmathsetmacro{\yBot}{0.0}
    \pgfmathsetmacro{\LabelYOffset}{0.22}

    \pgfmathsetmacro{\wBOne}{2.0}
    \pgfmathsetmacro{\gapBOne}{0.8}
    \pgfmathsetmacro{\wBMid}{2.2}
    \pgfmathsetmacro{\gapBTwo}{0.8}

    \pgfmathsetmacro{\xBEndOne}{\wBOne}
    \pgfmathsetmacro{\xBStartMid}{\xBEndOne + \gapBOne}
    \pgfmathsetmacro{\xBEndMid}{\xBStartMid + \wBMid}
    \pgfmathsetmacro{\xBStartBot}{\xBEndMid + \gapBTwo}
    \pgfmathsetmacro{\xBEndBot}{\SplitX}

    \pgfmathsetmacro{\wOOne}{1.5}
    \pgfmathsetmacro{\gapOOne}{0.8}
    \pgfmathsetmacro{\wOMid}{2.0}
    \pgfmathsetmacro{\gapOTwo}{0.8}

    \pgfmathsetmacro{\xOStartOne}{\SplitX}
    \pgfmathsetmacro{\xOEndOne}{\SplitX + \wOOne}
    \pgfmathsetmacro{\xOStartMid}{\xOEndOne + \gapOOne}
    \pgfmathsetmacro{\xOEndMid}{\xOStartMid + \wOMid}
    \pgfmathsetmacro{\xOStartBot}{\xOEndMid + \gapOTwo}
    \pgfmathsetmacro{\xOEndBot}{\TotalW}


    \draw[fill=cyan!20] (0, \yOne) rectangle (\xBEndOne, \yTop);
    \node at (0.5*\xBEndOne, 0.5*\yOne + 0.5*\yTop + \LabelYOffset)
        {$\bm{I}_{h_{1,1}}$};
    \node at (0.5*\xBEndOne, 0.5*\yOne + 0.5*\yTop - \LabelYOffset)
        {$\bm{0}$};

    \draw[fill=orange!20] (\xOStartOne, \yOne) rectangle (\xOEndOne, \yTop);
    \node at (0.5*\xOStartOne + 0.5*\xOEndOne, 0.5*\yOne + 0.5*\yTop + \LabelYOffset)
        {$\bm{0}$};
    \node at (0.5*\xOStartOne + 0.5*\xOEndOne, 0.5*\yOne + 0.5*\yTop - \LabelYOffset)
        {$\bm{I}_{h_{2,1}}$};

    \draw[fill=cyan!20] (\xBEndOne, \yTwo) rectangle (\xBStartMid, \yOne);
    \draw[fill=orange!20] (\xOEndOne, \yTwo) rectangle (\xOStartMid, \yOne);

    \pgfmathsetmacro{\GapOneX}{(\xBEndOne + \xBStartMid)/2}
    \pgfmathsetmacro{\GapOneY}{(\yOne + \yTwo)/2}
    \node at (\GapOneX, \GapOneY) {\Large $\ddots$};

    \pgfmathsetmacro{\GapOneOX}{(\xOEndOne + \xOStartMid)/2}
    \node at (\GapOneOX, \GapOneY) {\Large $\ddots$};

    \draw[fill=cyan!20] (\xBStartMid, \yThree) rectangle (\xBEndMid, \yTwo);
    \node at (0.5*\xBStartMid + 0.5*\xBEndMid, 0.5*\yThree + 0.5*\yTwo + \LabelYOffset)
        {$\bm{I}_{h_{1,i}}$};
    \node at (0.5*\xBStartMid + 0.5*\xBEndMid, 0.5*\yThree + 0.5*\yTwo - \LabelYOffset)
        {$\bm{0}$};

    \draw[fill=orange!20] (\xOStartMid, \yThree) rectangle (\xOEndMid, \yTwo);
    \node at (0.5*\xOStartMid + 0.5*\xOEndMid, 0.5*\yThree + 0.5*\yTwo + \LabelYOffset)
        {$\bm{0}$};
    \node at (0.5*\xOStartMid + 0.5*\xOEndMid, 0.5*\yThree + 0.5*\yTwo - \LabelYOffset)
        {$\bm{I}_{h_{2,i}}$};

    \draw[smallbraceMirror] (\xBStartMid, \yThree-0.07) -- (\xBEndMid, \yThree-0.07)
        node[midway, below=1pt] {\scriptsize $h_{1,i}$};
    \draw[smallbraceMirror] (\xOStartMid, \yThree-0.07) -- (\xOEndMid, \yThree-0.07)
        node[midway, below=1pt] {\scriptsize $h_{2,i}$};

    \draw[fill=cyan!20] (\xBEndMid, \yFour) rectangle (\xBStartBot, \yThree);
    \draw[fill=orange!20] (\xOEndMid, \yFour) rectangle (\xOStartBot, \yThree);

    \pgfmathsetmacro{\GapTwoX}{(\xBEndMid + \xBStartBot)/2}
    \pgfmathsetmacro{\GapTwoY}{(\yThree + \yFour)/2}
    \node at (\GapTwoX, \GapTwoY) {\Large $\ddots$};

    \pgfmathsetmacro{\GapTwoOX}{(\xOEndMid + \xOStartBot)/2}
    \node at (\GapTwoOX, \GapTwoY) {\Large $\ddots$};

    \draw[fill=cyan!20] (\xBStartBot, \yBot) rectangle (\xBEndBot, \yFour);
    \node at (0.5*\xBStartBot + 0.5*\xBEndBot, 0.5*\yBot + 0.5*\yFour + \LabelYOffset)
        {$\bm{I}_{h_{1,n_A}}$};
    \node at (0.5*\xBStartBot + 0.5*\xBEndBot, 0.5*\yBot + 0.5*\yFour - \LabelYOffset)
        {$\bm{0}$};

    \draw[fill=orange!20] (\xOStartBot, \yBot) rectangle (\xOEndBot, \yFour);
    \node at (0.5*\xOStartBot + 0.5*\xOEndBot, 0.5*\yBot + 0.5*\yFour + \LabelYOffset)
        {$\bm{0}$};
    \node at (0.5*\xOStartBot + 0.5*\xOEndBot, 0.5*\yBot + 0.5*\yFour - \LabelYOffset)
        {$\bm{I}_{h_{2,n_A}}$};

    \draw[guideline] (\xBEndOne, \TotalH) -- (\xBEndOne, 0);
    \draw[guideline] (\xBStartMid, \TotalH) -- (\xBStartMid, 0);
    \draw[guideline] (\xBEndMid, \TotalH) -- (\xBEndMid, 0);
    \draw[guideline] (\xBStartBot, \TotalH) -- (\xBStartBot, 0);

    \draw[guideline] (\xOEndOne, \TotalH) -- (\xOEndOne, 0);
    \draw[guideline] (\xOStartMid, \TotalH) -- (\xOStartMid, 0);
    \draw[guideline] (\xOEndMid, \TotalH) -- (\xOEndMid, 0);
    \draw[guideline] (\xOStartBot, \TotalH) -- (\xOStartBot, 0);

    \draw[thick] (0, 0) rectangle (\TotalW, \TotalH);
    \draw[thick, dashed] (\SplitX, 0) -- (\SplitX, \TotalH);

    \draw[dotted] (0, \yOne) -- (\TotalW, \yOne);
    \draw[dotted] (0, \yTwo) -- (\TotalW, \yTwo);
    \draw[dotted] (0, \yThree) -- (\TotalW, \yThree);
    \draw[dotted] (0, \yFour) -- (\TotalW, \yFour);

    \pgfmathsetmacro{\tickL}{0.07}

    \draw[thick] (0, \yTop) -- ++(-\tickL, 0) node[left, inner sep=2pt] {$1$};
    \draw[thick] (0, \yOne) -- ++(-\tickL, 0) node[left, inner sep=2pt] {$s_1$};
    \node at (-0.32, \GapOneY) {$\vdots$};
    \draw[thick] (0, \yTwo) -- ++(-\tickL, 0) node[left, inner sep=2pt] {$s_{i-1}$};
    \draw[thick] (0, \yThree) -- ++(-\tickL, 0) node[left, inner sep=2pt] {$s_i$};
    \node at (-0.32, \GapTwoY) {$\vdots$};
    \draw[thick] (0, \yFour) -- ++(-\tickL, 0) node[left, inner sep=2pt] {$s_{n_A-1}$};
    \draw[thick] (0, 0) -- ++(-\tickL, 0) node[left, inner sep=2pt] {$d$};

    \draw[thick] (0, \TotalH) -- ++(0, \tickL)
        node[above=1pt, inner sep=1pt] {$1$};

    \draw[thick] (\xBEndOne, \TotalH) -- ++(0, \tickL)
        node[above=1pt, inner sep=1pt, font=\scriptsize] {$h_{1,1}$};

    \draw[thick] (\xBStartMid, \TotalH) -- ++(0, \tickL);
    \draw[thick] (\xBEndMid, \TotalH) -- ++(0, \tickL);
    \draw[thick] (\xBStartBot, \TotalH) -- ++(0, \tickL);

    \draw[thick] (\SplitX, \TotalH) -- ++(0, \tickL)
        node[above=1pt, inner sep=1pt] {$t_{n_B}$};

    \draw[thick] (\xOEndOne, \TotalH) -- ++(0, \tickL)
        node[above=1pt, inner sep=1pt, font=\scriptsize] {$t_{n_B}+h_{2,1}$};

    \draw[thick] (\xOStartMid, \TotalH) -- ++(0, \tickL);
    \draw[thick] (\xOEndMid, \TotalH) -- ++(0, \tickL);
    \draw[thick] (\xOStartBot, \TotalH) -- ++(0, \tickL);

    \draw[thick] (\TotalW, \TotalH) -- ++(0, \tickL)
        node[above=1pt, inner sep=1pt] {$d$};

    \pgfmathsetmacro{\yBraceTop}{\TotalH + 0.42}

    \draw[brace] (0, \yBraceTop) -- (\SplitX, \yBraceTop)
        node[midway, above=3pt] {$\bar{\bm E}(\bm h_1)$};

    \draw[brace] (\SplitX, \yBraceTop) -- (\TotalW, \yBraceTop)
        node[midway, above=3pt] {$\widehat{\bm E}(\bm h_2)$};

    \draw[braceMirror] (0, -0.08) -- (\SplitX, -0.08)
        node[midway, below=1pt] {\scriptsize $t_{n_B}$ columns};
    \draw[braceMirror] (\SplitX, -0.08) -- (\TotalW, -0.08)
        node[midway, below=1pt] {\scriptsize $d-t_{n_B}$ columns};

    \end{tikzpicture}
    }
    \caption{Visualization of the block structure of matrix $\bm E(\bm h)$ defined in \eqref{eq:basedef}. All entries in the unshaded regions are zero.}
    \label{fig:blockplot}
\end{figure}
\begin{theorem}\label{thm:singular-case}
Suppose that \(\bm A\in\S^d\) is in the form \eqref{eq:block a} and
\(\bm B\in\S^K\) is singular but nonzero, and is in the form
\eqref{eq:block b1}.
Then the set $\mathcal{Q}$ can be expressed as 
\begin{align*}
    \mathcal{Q} = \bigcup_{\bm h\in \widetilde{\mathcal{H}},\ \bm \Pi\in \mathcal{P}^{t_{n_B}}} \mathcal{Q}_{\bm h,\bm \Pi},
\end{align*}
where every $\bm Q \in \mathcal{Q}_{\bm h,\bm \Pi}$ takes the form 
\begin{align}
    \bm Q = \mathrm{BlkD}(\bm U_1,\ldots,\bm U_{n_A})  \left[\bar{\bm E}(\bm h_1) \bm \Pi\ \widehat{\bm E}(\bm h_2)\bm V_{n_B+1}\right]  \mathrm{BlkD}\left(\bm V_1^T,\ldots,\bm V_{n_B}^T,\bm I_{K-t_{n_B}} \right)
    \label{eq:exp_singular}
\end{align}
for some $\bm U_i\in \mathcal{O}^{s_i-s_{i-1}}$ for each $i\in [n_A]$, $\bm V_j \in \mathcal{O}^{t_j -t_{j-1}}$ for each $j\in[n_B]$, and $\bm V_{n_B+1} \in \mathrm{St}(d-t_{n_B}, K-t_{n_B})$. 
\end{theorem}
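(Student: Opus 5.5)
The plan is to reduce everything to \Cref{fact:1} through the decomposition \eqref{eq:crit 1}. By \eqref{eq:crit 1}, $\bm Q=[\bm Q_1,\bm Q_2]\in\mathcal Q$ if and only if $\bm Q\in\mathrm{St}(d,K)$ and $\bm Q_1\in\mathcal Q_1$. Here $\mathcal Q_1$ is exactly the critical point set of the QPOC instance $\max\{\mathrm{Tr}(\bm Y^T\bm A\bm Y\widehat{\bm B}):\bm Y\in\mathrm{St}(d,t_{n_B})\}$, whose matrix $\widehat{\bm B}$ is nonsingular. The only caveat is that the diagonal blocks of $\widehat{\bm B}$ are ordered with positive values decreasing and then negative values decreasing. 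Since all nonzero eigenvalues are distinct across blocks and the ordering is still decreasing, this is already the form \eqref{eq:block b}. (If one worries about block ordering, a block permutation can be absorbed into $\bm\Pi$.)

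\textbf{Step 1: apply \Cref{fact:1} to $\mathcal Q_1$.} This gives
\[
\bm Q_1=\mathrm{BlkD}(\bm U_1,\ldots,\bm U_{n_A})\,\bar{\bm E}(\bm h_1)\,\bm\Pi\,\mathrm{BlkD}(\bm V_1^T,\ldots,\bm V_{n_B}^T),
\]
with $\bm h_1$ ranging over the set $\mathcal H$ associated with $K$ replaced by $t_{n_B}$, and $\bm\Pi\in\mathcal P^{t_{n_B}}$. Note that $\bar{\bm E}(\bm h_1)$ coincides with $\bm E(\bm h_1)$ from the nonsingular section.

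\textbf{Step 2: characterize the admissible $\bm Q_2$.} The requirement is exactly $\bm Q_2\in\mathrm{St}(d,K-t_{n_B})$ with $\bm Q_1^T\bm Q_2=\bm 0$, i.e.\ the columns of $\bm Q_2$ form an orthonormal set in $\mathrm{range}(\bm Q_1)^\perp$. Write $\bm U:=\mathrm{BlkD}(\bm U_1,\ldots,\bm U_{n_A})$, which is orthogonal. Then
\[
\mathrm{range}(\bm Q_1)=\mathrm{range}(\bm U\bar{\bm E}(\bm h_1)),
\]
because $\bm\Pi$ and the $\bm V_j$ are orthogonal. Since $\bm E(\bm h)=[\bar{\bm E}(\bm h_1)\ \widehat{\bm E}(\bm h_2)]$ is a permutation matrix, hence orthogonal, the orthogonal complement is $\mathrm{range}(\bm U\widehat{\bm E}(\bm h_2))$. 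Therefore $\bm Q_2=\bm U\widehat{\bm E}(\bm h_2)\bm V_{n_B+1}$ for a unique $\bm V_{n_B+1}=\widehat{\bm E}(\bm h_2)^T\bm U^T\bm Q_2$, which has orthonormal columns, i.e.\ lies in $\mathrm{St}(d-t_{n_B},K-t_{n_B})$. Conversely, any such $\bm V_{n_B+1}$ yields a valid $\bm Q_2$.

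\textbf{Step 3: assemble.} Factoring out $\bm U$ on the left and $\mathrm{BlkD}(\bm V_1^T,\ldots,\bm V_{n_B}^T,\bm I_{K-t_{n_B}})$ on the right gives exactly \eqref{eq:exp_singular}, with $\bm h=(\bm h_1,\bm h_2)\in\widetilde{\mathcal H}$, since $\bm h_2$ is determined by $\bm h_1$. For the reverse inclusion, every matrix of the form \eqref{eq:exp_singular} has orthonormal columns, because it is an orthogonal matrix times a matrix with orthonormal columns times an orthogonal matrix. Its first block lies in $\mathcal Q_1$ by \Cref{fact:1}, so by \eqref{eq:crit 1} it belongs to $\mathcal Q$.

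The main obstacle is justifying \eqref{eq:crit 1} itself; the paper attributes it to \cite[Section 3.1.4]{liu2019quadratic}. If I had to verify it, I would substitute $\bm B=\mathrm{BlkD}(\widehat{\bm B},\bm 0)$ into \eqref{eq:crit set}. The second column block becomes $\bm 0=\bm 0$. The first column block reads $\bm A\bm Q_1\widehat{\bm B}-\bm Q_1\widehat{\bm B}\bm Q_1^T\bm A\bm Q_1=\bm 0$, because
\[
\bm Q\bm B\bm Q^T=\bm Q_1\widehat{\bm B}\bm Q_1^T
\]
and the first $t_{n_B}$ columns of $\bm Q^T\bm A\bm Q\bm B$ pick out $\bm Q^T\bm A\bm Q_1\widehat{\bm B}$. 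Hence the condition depends only on $\bm Q_1$, and the remainder of the argument is a routine bookkeeping check of the block orderings.
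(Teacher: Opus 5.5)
Your Steps 1--3 are correct and are essentially the paper's proof. You apply \Cref{fact:1} to $\mathcal Q_1$, whose matrix $\widehat{\bm B}$ is nonsingular and already in decreasing order. You then use $\bm Q_1^T\bm Q_2=\bm 0$, together with the fact that $\bm U\bm E(\bm h)$ is orthogonal, to write $\bm Q_2=\bm U\widehat{\bm E}(\bm h_2)\bm V_{n_B+1}$. For the reverse inclusion you go through \Cref{fact:1} and \eqref{eq:crit 1}, whereas the paper substitutes \eqref{eq:exp_singular} directly into \eqref{eq:crit set}. That difference is inessential.

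Your aside verifying \eqref{eq:crit 1} contains an error: the second column block is not $\bm 0=\bm 0$. Since $\bm Q\bm B\bm Q^T=\bm Q_1\widehat{\bm B}\bm Q_1^T$, substitution gives
\[
\bm A\bm Q\bm B-\bm Q\bm B\bm Q^T\bm A\bm Q=\bigl[\,\bm A\bm Q_1\widehat{\bm B}-\bm Q_1\widehat{\bm B}\bm Q_1^T\bm A\bm Q_1,\ \ -\bm Q_1\widehat{\bm B}\bm Q_1^T\bm A\bm Q_2\,\bigr].
\]
So \eqref{eq:crit set} also requires $\bm Q_1^T\bm A\bm Q_2=\bm 0$. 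A zero second block would hold for $\bm Q\bm Q^T\bm A\bm Q\bm B$, but not for $\bm Q\bm B\bm Q^T\bm A\bm Q$.

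The extra condition does follow from the first block, but it needs an argument. Multiply the first-block equation on the right by $\widehat{\bm B}^{-1}$. This gives $\bm A\bm Q_1=\bm Q_1\bm M$ with $\bm M:=\widehat{\bm B}\bm Q_1^T\bm A\bm Q_1\widehat{\bm B}^{-1}$, so $\mathrm{col}(\bm Q_1)$ is $\bm A$-invariant. Then $\bm Q_1^T\bm A\bm Q_2=(\bm A\bm Q_1)^T\bm Q_2=\bm M^T\bm Q_1^T\bm Q_2=\bm 0$.

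Thus \eqref{eq:crit 1} is true, and your main argument stands. However, the reduction to a condition on $\bm Q_1$ alone relies on three facts: the Stiefel constraint $\bm Q_1^T\bm Q_2=\bm 0$, the invertibility of $\widehat{\bm B}$, and the symmetry of $\bm A$. It is not an identity that holds trivially.
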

\begin{proof}
   Using $\bar{\bm E}_i(\bm h_1)^T\widehat{\bm E}_i(\bm h_2) = \bm 0$ for each $i\in [n_A]$, it is straightforward to verify that every $\bm Q \in \mathcal{Q}_{\bm h,\bm \Pi}$ of the form~\eqref{eq:exp_singular} satisfies $\bm A\bm Q \bm B - \bm Q \bm B\bm Q^T\bm A\bm Q = \bm 0$, and thus $\mathrm{grad} F(\bm Q) = \bm 0$.
    
Using the equivalence between \eqref{eq:crit set} and \eqref{eq:crit 1}, it suffices to take an arbitrary \(\bm Q=[\bm Q_1,\bm Q_2]\in\mathcal Q\) as in \eqref{eq:crit 1} and show that it admits the representation \eqref{eq:exp_singular}. 
We note that $\mathcal{Q}_1$ in \eqref{eq:crit 1} coincides with the critical point set of the following QPOC problem 
    \[
    \max_{\bm{Q} \in \mathrm{St}(d, t_{n_B})} \mathrm{Tr}\bigl( \bm{Q}^T \bm{A} \bm{Q} \widehat{\bm{B}} \bigr).
    \]
    By \Cref{fact:1}, every $\bm{Q}_1 \in \mathcal{Q}_1$ admits the representation
    \begin{equation}
        \bm{Q}_1 = \mathrm{BlkD}(\bm{U}_1, \ldots, \bm{U}_{n_A}) \bar{\bm{E}}(\bm{h}_1) \, \bm{\Pi}  \mathrm{BlkD}(\bm{V}_1^T, \ldots, \bm{V}_{n_B}^T),\notag
    \end{equation}
    where $\bm{U}_i \in \mathcal{O}^{s_i - s_{i-1}}$ for all $i \in [n_A]$, $\bm{V}_j \in \mathcal{O}^{t_j - t_{j-1}}$ for all $j \in [n_B]$, and $\bar{\bm{E}}(\bm{h}_1)$ is defined in~\eqref{eq:Eh1}. Then  $\bm U := \mathrm{BlkD}(\bm U_1, \ldots, \bm U_{n_A})$ is an orthogonal matrix whose columns form an orthonormal basis for $\mathbb{R}^d$ and $\bar{\bm E}(\bm h_1)$ selects $t_{n_B}$ columns of $\bm U$. 
    
    Moreover, right multiplication by $\bm \Pi$ and $\mathrm{BlkD}(\bm V_1^T,\ldots,\bm V_{n_B}^T)$ does not change the column space of $\bm Q_1$. Hence, with $\mathrm{col}(\cdot)$ denoting the column space, $\mathrm{col}(\bm Q_1)
= \mathrm{col}\bigl(\bm U\bar{\bm E}(\bm h_1)\bigr).$ Since $\bm Q=[\bm Q_1,\bm Q_2]\in \mathrm{St}(d,K)$, we have $\bm Q_1^T\bm Q_2=\bm 0$. 
Therefore,
\(\mathrm{col}(\bm Q_2)\subseteq \mathrm{col}(\bm Q_1)^\perp
=
\mathrm{col}(\bm U\widehat{\bm E}(\bm h_2))\).
Hence, there exists \(\bm V_{n_B+1}\in\mathrm{St}(d-t_{n_B},K-t_{n_B})\) such that  $\bm Q_2 = \bm U\widehat{\bm E}(\bm h_2)\bm V_{n_B+1}.$ Consequently, \(\bm Q\) is in the form \eqref{eq:exp_singular}.
\end{proof}

\section{Optimization Landscape Analysis of QPOC}\label{sec:QPOC}

In this section, we study the optimization landscape of Problem~\eqref{eq:QPOC}. Without loss of generality, we focus on the case where $\bm B$ is singular, since the nonsingular case corresponds to the special case $t_{n_B}=K$, where the zero block is absent. Moreover, for any scalar $\alpha$, replacing $\bm A$ in \eqref{eq:block a} by $\bm A+\alpha\bm I$ changes the objective value on $\mathrm{St}(d,K)$ only by the additive constant $\alpha\mathrm{Tr}(\bm B)$. Therefore, we assume without loss of generality that $\bm A \succ \bm 0$, i.e., $a_i > 0$ for all $i \in [d]$. 
To begin, we identify a condition, expressed in terms of the permutation and the signs of the nonzero diagonal entries of $\bm B$, under which a critical point is a global maximizer of Problem~\eqref{eq:QPOC}.

Before proceeding with the analysis, we introduce some notation. By \Cref{thm:singular-case}, any $\bm Q \in \mathcal{Q}_{\bm h,\bm \Pi}$ takes the form
\begin{align}\label{eq1:prop saddle}
    \bm Q = \mathrm{BlkD}(\bm U_1,\ldots,\bm U_{n_A}) \left[\bar{\bm E}(\bm h_1)\bm \Pi\ \ \widehat{\bm E}(\bm h_2)\bm V_{n_B+1}\right] \mathrm{BlkD}\left(\bm V_1^T,\ldots,\bm V_{n_B}^T,\bm I\right)
\end{align}
for some $\bm U_i\in \mathcal{O}^{s_i-s_{i-1}}$ for each $i\in[n_A]$, $\bm V_j \in \mathcal{O}^{t_j -t_{j-1}}$ for each $j\in[n_B]$, and $\bm V_{n_B+1} \in \mathrm{St}(d-t_{n_B}, K - t_{n_B})$. For ease of exposition, define
\begin{align}
\label{eq:defuv}
    \bm U:= \mathrm{BlkD}(\bm U_1,\ldots,\bm U_{n_A}),\quad
    \bm V := \mathrm{BlkD}\left(\bm V_1^T,\ldots,\bm V_{n_B}^T,\bm I_{K-t_{n_B}}\right).
\end{align}
Moreover, define 
\begin{align}\label{eq:defPsi}
\bm \Psi := \mathrm{diag}\left(a_1,\ldots,a_{t_p}, a_{d+t_p-t_{n_B}+1}, \ldots, a_d\right) \in \S^{t_{n_B}},\ \forall p \in \{0,1,\dots,n_B\},  
\end{align}
where the diagonal entries of $\bm \Psi$ are sorted in descending order. In particular, $p = 0$ (resp., $p = n_B$) indicates $\bm \Psi := \mathrm{diag}(a_{d+1-t_{n_B}},\ldots,a_d)$ (resp., $ \bm \Psi := \mathrm{diag}(a_1,\ldots,a_{t_p})$). Further define
\begin{equation}
\label{eq:deftildeA}
 \widetilde{\bm A} := \bm \Pi^T \bar{\bm{E}}(\bm h_1)^T \bm{A} \bar{\bm{E}}(\bm h_1)\bm \Pi \in \R^{t_{n_B} \times t_{n_B}}.
\end{equation}
Since $\bm A$ is diagonal and $\bar{\bm E}(\bm h_1)$ is a coordinate-selection matrix, $\widetilde{\bm A}$ is diagonal and admits the block partition
\[
\widetilde{\bm A} = \mathrm{BlkD}\!\left(\mathrm{diag}(\tilde{\bm a}_1),\dots,\mathrm{diag}(\tilde{\bm a}_{n_B})\right),\ \text{where}\ \tilde{\bm a}_i \in \mathbb{R}^{t_i-t_{i-1}},\ \forall i\in[n_B].
\]
For each $i\in[n_B]$, choose $\bar{\bm \Pi}_i \in \mathcal{P}^{t_i-t_{i-1}}$ such that
\begin{align*}
\bar{\bm \Pi}_i^T\mathrm{diag}(\tilde{\bm a}_i)\bar{\bm \Pi}_i = \mathrm{diag}\left(\bar{\bm a}_{i} \right),
\end{align*}
where $\bar{\bm a}_i=(\bar a_{i,1},\ldots,\bar a_{i,t_i-t_{i-1}})$ is sorted in decreasing order $\bar a_{i,1}\ge \bar a_{i,2}\ge \cdots\ge \bar a_{i,t_i-t_{i-1}}$ for each $i \in [n_B]$. 
Let $\bar{\bm\Pi}:=\mathrm{BlkD}(\bar{\bm\Pi}_1,\ldots,\bar{\bm\Pi}_{n_B})$ and
\begin{align}\label{eq:Omega_def_rewrite}
\bm\Omega :=\mathrm{BlkD}(\mathrm{diag}\left(\bar{\bm a}_1),\ldots,\mathrm{diag}(\bar{\bm a}_{n_B})\right)= \bar{\bm \Pi}^T\widetilde{\bm A}\bar{\bm \Pi}.
\end{align}
By construction, $\bm\Omega$ is a blockwise sorted version of
$\widetilde{\bm A}$, with the blocks corresponding to the equal-eigenvalue blocks of $\widehat{\bm B}$ defined in \eqref{eq:block b1}. In contrast, $\bm\Psi$ represents the optimal spectral
assignment dictated by the signs of the nonzero eigenvalues of $\bm B$: the
positive entries of $\widehat{\bm B}$ are paired with the largest diagonal
entries of $\bm A$, whereas the negative entries are paired with the
smallest ones. The following proposition shows that if this pattern is realized, then the corresponding critical point is globally optimal.
\begin{proposition}\label{prop:global}
Fix $\bm h\in \widetilde{\mathcal{H}}$ and
$\bm \Pi\in \mathcal{P}^{t_{n_B}}$, and let
$\bm Q \in \mathcal{Q}_{\bm h,\bm \Pi}$. Let $\bm\Psi$ and $\bm\Omega$
be defined as in \eqref{eq:defPsi} and \eqref{eq:Omega_def_rewrite}, respectively.
Suppose that $\bm\Psi=\bm\Omega$. Then $\bm Q$ is a global maximizer.
\end{proposition}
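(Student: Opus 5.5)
The plan is to compute $F(\bm Q)$ in closed form at the critical point and compare it with a universal upper bound on $F$ over $\mathrm{St}(d,K)$; the bound and the value should coincide exactly when $\bm\Psi=\bm\Omega$.

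\textbf{Value at the critical point.} Write $\bm Q=\bm U[\bar{\bm E}(\bm h_1)\bm\Pi\ \widehat{\bm E}(\bm h_2)\bm V_{n_B+1}]\bm V$ as in \eqref{eq1:prop saddle}. Since $\bm U$ commutes with $\bm A$ (it is block diagonal along the equal-eigenvalue blocks of $\bm A$) and $\bm V$ commutes with $\bm B$, we get
\[
F(\bm Q)=\mathrm{Tr}\bigl(\bm\Pi^T\bar{\bm E}(\bm h_1)^T\bm A\bar{\bm E}(\bm h_1)\bm\Pi\,\widehat{\bm B}\bigr)=\mathrm{Tr}(\widetilde{\bm A}\widehat{\bm B}).
\]
Here the zero block of $\bm B$ kills the $\bm Q_2$ part. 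Because $\bar{\bm\Pi}$ is block diagonal along the blocks of $\widehat{\bm B}$, it commutes with $\widehat{\bm B}$, so $F(\bm Q)=\mathrm{Tr}(\bm\Omega\widehat{\bm B})$. Under the hypothesis this equals
\[
\mathrm{Tr}(\bm\Psi\widehat{\bm B})=\sum_{j\le t_p} a_j b_j+\sum_{j=t_p+1}^{t_{n_B}} a_{d-t_{n_B}+j}\, b_j .
\]

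\textbf{Upper bound.} For any $\bm Q\in\mathrm{St}(d,K)$, let $\bm Q_1$ be its first $t_{n_B}$ columns. Then $F(\bm Q)=\mathrm{Tr}(\bm Q_1^T\bm A\bm Q_1\widehat{\bm B})$. The matrix $\bm M:=\bm Q_1^T\bm A\bm Q_1$ is symmetric, and by Cauchy interlacing its eigenvalues $\mu_1\ge\dots\ge\mu_{t_{n_B}}$ satisfy
\[
a_{d-t_{n_B}+j}\le \mu_j\le a_j .
\]
By von Neumann's trace inequality, with $\widehat{\bm B}$ having diagonal entries $b_1\ge\dots\ge b_{t_{n_B}}$ sorted decreasingly, we get $\mathrm{Tr}(\bm M\widehat{\bm B})\le\sum_j\mu_j b_j$. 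Then:
\begin{itemize}
\item for $j\le t_p$ we have $b_j>0$, so $\mu_jb_j\le a_jb_j$;
\item for $j>t_p$ we have $b_j<0$, so $\mu_jb_j\le a_{d-t_{n_B}+j}b_j$.
\end{itemize}
Summing gives $F(\bm Q)\le\mathrm{Tr}(\bm\Psi\widehat{\bm B})$.

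One bookkeeping point: the blocks in \eqref{eq:block b1} are ordered positive (decreasing), then negative (decreasing), then zero. So $\widehat{\bm B}$ is sorted decreasingly, which is exactly what the von Neumann step requires.

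\textbf{Conclusion and main obstacle.} Combining the two steps, $F(\bm Q)=\mathrm{Tr}(\bm\Psi\widehat{\bm B})$ attains the universal upper bound, so $\bm Q$ is a global maximizer. The main care point is the index bookkeeping: $\bm\Psi$ in \eqref{eq:defPsi} must line up entrywise with the interlacing bounds, which requires $a_{d+t_p-t_{n_B}+i}=a_{d-t_{n_B}+j}$ for $j=t_p+i$. With that checked, the argument is routine.
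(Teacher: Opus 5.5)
Your proof is correct, but it obtains the upper bound by a different route from the paper. The value computation is the same in both: $F(\bm Q)=\mathrm{Tr}(\widetilde{\bm A}\widehat{\bm B})=\mathrm{Tr}(\bm\Omega\widehat{\bm B})$, using that $\bm U$ commutes with $\bm A$ and that $\bm V$ and $\bar{\bm\Pi}$ commute with $\bm B$ and $\widehat{\bm B}$.

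For the bound, the paper compares only the values $\mathrm{Tr}(\bm\Pi^T\mathrm{diag}(\bm\alpha)\bm\Pi\widehat{\bm B})$ over all choices of $\bm h_1$ and $\bm\Pi$, that is, over the critical points. It applies the rearrangement inequality to the selected diagonal entries of $\bm A$. Turning this into global optimality tacitly uses two facts: $F$ attains its maximum on the compact manifold at a critical point, and every critical point has the form given in \Cref{thm:singular-case}.

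Your argument instead bounds $F$ on all of $\mathrm{St}(d,K)$, via Poincar\'e separation (Cauchy interlacing for $\bm Q_1^T\bm A\bm Q_1$) and the eigenvalue form of von Neumann's trace inequality. As a result it needs only the representation of the given $\bm Q$, not the completeness of the critical-point characterization. It also does not use the normalization $\bm A\succ\bm 0$. The price is two classical spectral inequalities in place of an elementary rearrangement argument.

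One point to state explicitly: since $\widehat{\bm B}$ is indefinite, you need the signed-eigenvalue version $\mathrm{Tr}(\bm M\widehat{\bm B})\le\sum_j\mu_j b_j$, not the singular-value version. It follows from the singular-value version by shifting both matrices by multiples of the identity to make them positive semidefinite.
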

\begin{proof}
By \Cref{thm:singular-case}, any $\bm Q \in \mathcal{Q}_{\bm h,\bm \Pi}$ takes the form \eqref{eq1:prop saddle}. Define \(\bm \alpha:=\mathrm{diag}\left(\bar{\bm E}(\bm h_1)^T\bm A\bar{\bm E}(\bm h_1)\right)\in \mathbb{R}^{t_{n_B}}\). Since $\bm A$ is diagonal and $\bar{\bm E}(\bm h_1)$ is a coordinate-selection matrix, the entries of $\bm\alpha$ are precisely the $t_{n_B}$ diagonal entries of $\bm A$ selected by $\bm h_1$. Substituting \eqref{eq1:prop saddle} into the objective of Problem~\eqref{eq:QPOC} gives
\begin{align}
    F(\bm Q)
    &=
    \mathrm{Tr}\left(
    \bm \Pi^T\bar{\bm E}(\bm h_1)^T
    \bm A
    \bar{\bm E}(\bm h_1)\bm \Pi
    \widehat{\bm B}
    \right) =
    \mathrm{Tr}\left(
    \bm \Pi^T\mathrm{diag}(\bm\alpha)\bm \Pi
    \widehat{\bm B}
    \right).
    \label{eq:plusFX}
\end{align}
Thus, the value of $F(\bm Q)$ depends only on the selected diagonal entries of $\bm A$, determined by $\bm h_1$, and on their ordering, determined by $\bm\Pi$.

We next identify the largest possible value of \eqref{eq:plusFX}. By the rearrangement inequality and the fact that all diagonal entries of $\bm A$ are positive, this inner product is maximized by pairing the positive entries of $\widehat{\bm B}$ with the largest $t_p$ diagonal entries of $\bm A$ and the negative entries with the smallest $t_{n_B}-t_p$ diagonal entries. 
Hence, for any feasible $\bm h_1$ and any permutation $\bm\Pi$, 
\begin{align}
    F(\bm Q)
    \le
    \sum_{i=1}^{t_p} a_i b_i
    +
    \sum_{i=t_p+1}^{t_{n_B}} a_{d-t_{n_B}+i} b_i .
    \label{eq:global_upper_bound}
\end{align}

It remains to show that $\bm Q$ attains this upper bound. Since
$\bar{\bm\Pi}$ only permutes coordinates within the equal-eigenvalue blocks
of $\widehat{\bm B}$, it commutes with $\widehat{\bm B}$. Therefore, by
cyclicity of the trace, (\ref{eq:defPsi}--\ref{eq:Omega_def_rewrite}), and $\bm\Psi=\bm\Omega$, we obtain
\begin{align}\label{eq:FQ}
F(\bm Q)
&=
\mathrm{Tr}\left(
\bm \Pi^T\bar{\bm E}(\bm h_1)^T
\bm A
\bar{\bm E}(\bm h_1)\bm \Pi
\widehat{\bm B}
\right)  =
\mathrm{Tr}\left(
\bar{\bm\Pi}^T
\bm \Pi^T\bar{\bm E}(\bm h_1)^T
\bm A
\bar{\bm E}(\bm h_1)\bm \Pi
\bar{\bm\Pi}
\widehat{\bm B}
\right) \notag\\
&=
\mathrm{Tr}\left(\bm \Psi \widehat{\bm B}\right) =
\sum_{i=1}^{t_p} a_i b_i
+
\sum_{i=t_p+1}^{t_{n_B}} a_{d-t_{n_B}+i} b_i .
\end{align}
Thus $\bm Q$ attains the upper bound in \eqref{eq:global_upper_bound} and is therefore a global maximizer.
\end{proof}

We next show that any critical point $\bm Q \in \mathcal{Q}_{\bm h,\bm\Pi}$
violating $\bm\Psi=\bm\Omega$ is a strict saddle point. The proof is to construct
an ascent direction along which the Riemannian Hessian has
positive curvature. This relies on Givens rotations \cite{givens1958computation}.

\begin{definition}[\bf Givens rotations]\label{def:rota}
Given a positive integer $n$, for any indices $1 \le i \neq  j \le n$ and angle $\theta \in \mathbb{R}$, the Givens rotation matrix $\bm G(i,j,\theta)\in\mathbb{R}^{n\times n}$
is defined as the identity matrix except for the $2\times2$ block indexed by
rows and columns $(i,j)$, where \(g_{ii}=\cos\theta,\ g_{ij}=\sin\theta,\ g_{ji}=-\sin\theta,\ g_{jj}=\cos\theta\).
\end{definition}
The next lemma provides an elementary identity that can be verified by a direct expansion of the affected $(i,j)$ block.
\begin{lemma} \label{lem:givens_effect}
Let $\bm C=\mathrm{diag}(c_1,\dots,c_n)$ and $\bm D=\mathrm{diag}(d_1,\dots,d_n)$ be diagonal matrices. 
Then it holds that 
\[
\mathrm{Tr}\left(\bm G(i,j,\theta)^{T}\bm D\bm G(i,j,\theta)\,\bm C\right)-\mathrm{Tr}(\bm D\bm C)
=(c_j-c_i)(d_i-d_j)\sin^2\theta.
\]
\end{lemma}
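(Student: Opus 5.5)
Since $\bm C$ and $\bm D$ are diagonal and $\bm G:=\bm G(i,j,\theta)$ differs from the identity only in rows and columns $i,j$, the plan is to localize the computation to the $2\times 2$ principal block indexed by $\{i,j\}$. I will write $\mathrm{Tr}(\bm G^T\bm D\bm G\bm C)=\sum_{k}c_k(\bm G^T\bm D\bm G)_{kk}$. The right multiplication by $\bm C$ is diagonal, so only the diagonal entries of $\bm G^T\bm D\bm G$ matter.

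First, for $k\notin\{i,j\}$ the $k$-th column of $\bm G$ is $\bm e_k$. Hence $(\bm G^T\bm D\bm G)_{kk}=d_k$, and these terms cancel against the corresponding terms of $\mathrm{Tr}(\bm D\bm C)$.

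Second, I will compute the two remaining diagonal entries from the columns of $\bm G$. Column $i$ has entries $\cos\theta$ in row $i$ and $-\sin\theta$ in row $j$. Column $j$ has entries $\sin\theta$ in row $i$ and $\cos\theta$ in row $j$. This gives
$(\bm G^T\bm D\bm G)_{ii}=d_i\cos^2\theta+d_j\sin^2\theta$ and $(\bm G^T\bm D\bm G)_{jj}=d_i\sin^2\theta+d_j\cos^2\theta$.

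Third, I subtract $c_id_i+c_jd_j$ and use $\cos^2\theta-1=-\sin^2\theta$. The difference becomes
\[
c_i(d_j-d_i)\sin^2\theta+c_j(d_i-d_j)\sin^2\theta=(c_j-c_i)(d_i-d_j)\sin^2\theta,
\]
which is the claimed identity.

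There is no real obstacle. The only point needing care is the sign convention for the off-diagonal entries of $\bm G$ in Definition~\ref{def:rota}, and since these entries enter squared, the sign convention does not affect the result.
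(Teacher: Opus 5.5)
Your proposal is correct and follows the same route as the paper, which only remarks that the identity follows from a direct expansion of the affected $(i,j)$ block. Your computation of the two diagonal entries $d_i\cos^2\theta+d_j\sin^2\theta$ and $d_i\sin^2\theta+d_j\cos^2\theta$, with all other diagonal entries unchanged, carries out exactly that expansion.
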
   
We next present a counting lemma that identifies a useful mismatched coordinate for the subsequent construction of an improving rotation direction. 
\begin{lemma}\label{lem:first_mismatch_counting}
Suppose that $\bm\Omega\neq\bm\Psi$, where $\bm\Psi$ and $\bm\Omega$ are
defined in \eqref{eq:defPsi} and \eqref{eq:Omega_def_rewrite}, respectively.
Write $\omega_i:=\bm\Omega(i,i)$ and $\psi_i:=\bm\Psi(i,i)$ for
$i\in[t_{n_B}]$. If $\omega_i\neq\psi_i$ for some $i\le t_p$, let
$i^*:=\min\{i\le t_p:\omega_i\neq\psi_i\}$; otherwise, let
$i^*:=\max\{i>t_p:\omega_i\neq\psi_i\}$. Then, we have 
\[
    i^*\le t_p \Rightarrow \omega_{i^*}<\psi_{i^*},
    \qquad
    i^*>t_p \Rightarrow \omega_{i^*}>\psi_{i^*}.
\]
Moreover, let $k\in[n_A]$ be such that
$\psi_{i^*}=a_{s_{k-1}+1}=\cdots=a_{s_k}$. If $\omega_j\neq \psi_{i^*}$ for all $j>i^*$
when $i^*\le t_p$, or $\omega_j\neq \psi_{i^*}$ for all $j<i^*$ when $i^*>t_p$,
then $h_{2,k}>0$.
\end{lemma}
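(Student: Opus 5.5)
The argument is a multiplicity count. It rests on one observation: the diagonal of $\bm\Omega$ is, as a multiset, exactly the multiset of the $t_{n_B}$ diagonal entries of $\bm A$ selected by $\bar{\bm E}(\bm h_1)$. This holds because $\bm\Omega=\bar{\bm\Pi}^T\bm\Pi^T\bar{\bm E}(\bm h_1)^T\bm A\bar{\bm E}(\bm h_1)\bm\Pi\bar{\bm\Pi}$ and permutations only reorder diagonal entries. Consequently, for every value $v$ the number of indices $j$ with $\omega_j=v$ equals $h_{1,k}$, where $k$ is the eigenvalue block of $\bm A$ containing $v$. In particular this number is at most the multiplicity $s_k-s_{k-1}$ of $v$ in $\bm A$. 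Recall also that $\psi_j=a_j$ for $j\le t_p$ and $\psi_j=a_{d-t_{n_B}+j}$ for $j>t_p$.

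\emph{Sign claim, case $i^*\le t_p$.} By minimality, $\omega_j=\psi_j=a_j$ for all $j<i^*$. Suppose for contradiction that $v:=\omega_{i^*}>\psi_{i^*}=a_{i^*}$. Since the $a_\ell$ are sorted decreasingly, every index $\ell$ with $a_\ell=v$ satisfies $\ell<i^*$. Hence the multiplicity of $v$ in $\bm A$ equals $\#\{j<i^*: a_j=v\}=\#\{j<i^*:\omega_j=v\}$. But $\omega_{i^*}=v$ as well, so $v$ occurs among the $\omega$'s strictly more often than in $\bm A$. This contradicts the multiset observation, so $\omega_{i^*}<\psi_{i^*}$.

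\emph{Sign claim, case $i^*>t_p$.} By maximality, $\omega_j=\psi_j=a_{d-t_{n_B}+j}$ for all $j>i^*$. If $v:=\omega_{i^*}<\psi_{i^*}$, then every copy of $v$ in $\bm A$ sits at an index $\ell>d-t_{n_B}+i^*$. All of these copies are already accounted for by the entries $\omega_j$ with $j>i^*$, which again gives one copy too many. Hence $\omega_{i^*}>\psi_{i^*}$.

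\emph{Claim $h_{2,k}>0$, case $i^*\le t_p$.} Let $v=\psi_{i^*}=a_{i^*}$, so that $s_{k-1}+1\le i^*\le s_k$. By the sign claim $\omega_{i^*}\ne v$, and by hypothesis $\omega_j\ne v$ for all $j>i^*$. Therefore every $j$ with $\omega_j=v$ satisfies $j<i^*$, where $\omega_j=a_j$. This gives
\[
h_{1,k}=\#\{j<i^*: a_j=v\}=i^*-1-s_{k-1}<s_k-s_{k-1}.
\]
Hence $h_{2,k}=s_k-s_{k-1}-h_{1,k}>0$.

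\emph{Claim $h_{2,k}>0$, case $i^*>t_p$.} Set $m:=d-t_{n_B}+i^*$, so that $s_{k-1}+1\le m\le s_k$ and $v=a_m$. Now the only indices with $\omega_j=v$ are those with $j>i^*$, where $\omega_j=a_{d-t_{n_B}+j}$. Their number is at most $s_k-m<s_k-s_{k-1}$, and again $h_{2,k}>0$.

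The only delicate point is the bookkeeping: identifying $h_{1,k}$ with the multiplicity of $v$ on the diagonal of $\bm\Omega$, and using the monotonicity of the $a_\ell$ to locate all copies of $v$ relative to $i^*$. Everything else is counting.
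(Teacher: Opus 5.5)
Your proof is correct and uses essentially the same multiplicity-counting argument as the paper: the diagonal of $\bm\Omega$ is the multiset of entries of $\bm A$ selected by $\bar{\bm E}(\bm h_1)$, and the monotonicity of the $a_\ell$ locates every copy of a given value relative to $i^*$. The differences are cosmetic. You count copies of the specific value $\omega_{i^*}$ rather than entries exceeding $\psi_{i^*}$, you get $h_{2,k}>0$ directly from the explicit count $h_{1,k}=i^*-1-s_{k-1}$ instead of by contradiction, and you write out the $i^*>t_p$ cases that the paper only calls analogous.
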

\begin{proof}We first consider the case $i^*\le t_p$. By the
definition of $i^*$, $\omega_i=\psi_i$ for all $i<i^*$. Since
$\psi_1,\ldots,\psi_{t_p}$ are the largest $t_p$ diagonal entries of
$\bm A$ in nonincreasing order, all diagonal entries of $\bm A$ that are
strictly larger than $\psi_{i^*}$ appear among $\psi_1,\ldots,\psi_{i^*-1}$.
Hence, if $\omega_{i^*}>\psi_{i^*}$, then the number of entries strictly larger
than $\psi_{i^*}$ in $\omega_1,\ldots,\omega_{i^*}$ would exceed the corresponding
multiplicity in the spectrum of $\bm A$, a contradiction.
Since $\omega_{i^*}\neq \psi_{i^*}$, we obtain
$\omega_{i^*}<\psi_{i^*}$.
The case $i^*>t_p$ is analogous.

It remains to prove the last assertion. We argue by contradiction. Set
$m_k:=s_k-s_{k-1}$. Suppose, to the contrary, that $h_{2,k}=0$. Since
$h_{1,k}+h_{2,k}=m_k$, all $m_k$ copies of the eigenvalue $\psi_{i^*}$ are selected
by $\bar{\bm E}(\bm h_1)$. Hence $\bm\Omega$, which is obtained by sorting
the selected diagonal entries blockwise, must contain exactly $m_k$
diagonal entries equal to $\psi_{i^*}$.

Consider the case $i^*\le t_p$. Since $\omega_i=\psi_i$ for all $i<i^*$ and
$\psi_{i^*}=\psi_{i^*}$, we have $   |\{i<i^*:\omega_i=\psi_{i^*}\}|
    =
    |\{i<i^*:\psi_i=\psi_{i^*}\}|
    \le m_k-1.$ Moreover, $\omega_{i^*}\neq \psi_{i^*}$. Hence, if no $j>i^*$ satisfies $\omega_j=\psi_{i^*}$, then the diagonal entries of
$\bm\Omega$ contain at most $m_k-1$ copies of $\psi_{i^*}$, contradicting $h_{2,k}=0$. 
Therefore $h_{2,k}>0$.
The case $i^*>t_p$ is analogous, with the order reversed.
\end{proof}

Based on the above setup, we now construct an ascent direction for each critical point that does not satisfy $\bm\Psi=\bm\Omega$ in the following proposition.
\begin{proposition}\label{prop:saddle}
Let $\bm h\in \widetilde{\mathcal{H}}$, $\bm \Pi\in \mathcal{P}^{t_{n_B}}$,
and $\bm Q \in \mathcal{Q}_{\bm h,\bm \Pi}$ be arbitrary.
Let $\bm\Psi$ and $\bm\Omega$ be defined as in \eqref{eq:defPsi} and
\eqref{eq:Omega_def_rewrite}, respectively. Suppose that $\bm\Psi\ne\bm\Omega$. Then $\bm Q$ is a strict saddle point.
\end{proposition}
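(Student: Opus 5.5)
The approach is to exhibit an explicit curve on $\mathrm{St}(d,K)$ through $\bm Q$ along which $F$ strictly increases to second order. Since $\bm Q$ is critical, the second derivative of $F$ along any smooth curve $\gamma(\theta)$ with $\gamma(0)=\bm Q$ equals $\mathrm{Hess}\,F(\bm Q)[\dot\gamma(0),\dot\gamma(0)]$. The acceleration term drops out because $\mathrm{grad}\,F(\bm Q)=\bm 0$. So it suffices to find $\gamma$ with $\dot\gamma(0)\neq\bm 0$ and $F(\gamma(\theta))-F(\bm Q)=c\sin^2\theta$ for some $c>0$.

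The curves will be Givens rotations composed with the factorization \eqref{eq1:prop saddle}. Absorbing $\bar{\bm\Pi}$ and using that it commutes with $\widehat{\bm B}$, we may reorder columns so that the first $t_{n_B}$ columns of $\bm U^T\bm Q\bm V^{-1}$ (suitably permuted) are standard basis vectors $\bm e_{\sigma(1)},\dots,\bm e_{\sigma(t_{n_B})}$ with $a_{\sigma(i)}=\omega_i$. Take $i^*$ and $k$ from \Cref{lem:first_mismatch_counting}; by symmetry we treat $i^*\le t_p$, so $b_{i^*}>0$ and $\omega_{i^*}<\psi_{i^*}=a_{s_k}$. We split into two cases according to the last assertion of the lemma.

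\emph{Case 1: some $j>i^*$ has $\omega_j=\psi_{i^*}$.} Here we rotate within the $\bm Q_1$ block. Set $\gamma(\theta)=\bm U[\bar{\bm E}(\bm h_1)\bm\Pi\bar{\bm\Pi}\,\bm G(i^*,j,\theta)\ \ \widehat{\bm E}(\bm h_2)\bm V_{n_B+1}]\bm V'$, where $\bm V'$ is the right factor adjusted for $\bar{\bm\Pi}$. This stays on the Stiefel manifold, and by \Cref{lem:givens_effect} with $\bm D=\bm\Omega$ and $\bm C=\widehat{\bm B}$,
\[
F(\gamma(\theta))-F(\bm Q)=(b_j-b_{i^*})(\omega_{i^*}-\omega_j)\sin^2\theta .
\]
Since $\omega_{i^*}<\psi_{i^*}=\omega_j$, the second factor is negative. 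The sorted-ness of $\bm\Omega$ within $\widehat{\bm B}$-blocks forces $j$ to lie in a later $\widehat{\bm B}$-block, since otherwise $\omega_j\le\omega_{i^*}$. Hence $b_j<b_{i^*}$, and $c>0$ in this case.

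\emph{Case 2: no such $j$.} The lemma gives $h_{2,k}>0$, so an unselected basis vector $\bm e_m$ with $a_m=\psi_{i^*}$ exists among the columns of $\widehat{\bm E}(\bm h_2)$. We rotate the column $\bm e_{\sigma(i^*)}$ toward $\bm e_m$ in $\R^d$, setting $\gamma(\theta)=\bm U\bm G(\sigma(i^*),m,\theta)\,\bm U^T\bm Q$ (a left Givens rotation, which preserves orthonormality). If $\bm e_m$ is used by $\bm V_{n_B+1}$, then $\bm Q_2$ also moves, but this is harmless because $\bm Q_2$ multiplies the zero block of $\bm B$. The objective change is computed by \Cref{lem:givens_effect} applied to $\bm D=\bm A$ and $\bm C=\mathrm{diag}$ of the $\bm B$-weights attached to $\bm e_{\sigma(i^*)}$ and $\bm e_m$, namely $b_{i^*}$ and $0$:
\[
F(\gamma(\theta))-F(\bm Q)=b_{i^*}(\psi_{i^*}-\omega_{i^*})\sin^2\theta>0 .
\]
The case $i^*>t_p$ is symmetric, with $b_{i^*}<0$, $\omega_{i^*}>\psi_{i^*}$, and $j<i^*$.

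The main obstacle is Case 2. Rigorously, the left rotation may mix $\bm e_m$ with a column of $\bm Q_2$, so we must check that $\mathrm{Tr}(\gamma^T\bm A\gamma\bm B)$ only sees the $\bm Q_1$ part. We must also check that $\dot\gamma(0)\neq\bm 0$; this holds because $\bm e_{\sigma(i^*)}$ is a column of $\bm U^T\bm Q$ and $\bm e_m$ is orthogonal to all of $\bm U^T\bm Q_1$. A further bookkeeping issue is verifying, in Case 1, that equal $\omega$ within one $\widehat{\bm B}$-block cannot occur, using the sorting convention of $\bm\Omega$.
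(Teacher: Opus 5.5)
Your proposal is correct and follows essentially the paper's proof. It uses the same choice of $i^*$ and case split from \Cref{lem:first_mismatch_counting}, and the same right Givens rotation inside the $\bm Q_1$ block in Case~1; you index it after absorbing $\bar{\bm\Pi}$, whereas the paper uses $i'=\bar\pi^{-1}(i^*)$, $j'=\bar\pi^{-1}(j)$ and $\widetilde{\bm A}$. In Case~2, your left rotation $\bm U\bm G(\sigma(i^*),m,\theta)\bm U^T\bm Q$ is exactly the paper's simultaneous rotation of the selected column $\bm u$ and the unselected column $\bm v=\widehat{\bm E}(\bm h_2)\bm e_{g_k}$.

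One minor remark: $\dot\gamma(0)\neq\bm 0$ needs no separate check, since $(F\circ\gamma)''(0)=2c>0$ together with $\mathrm{grad}\,F(\bm Q)=\bm 0$ already forces it. Your stated justification is also slightly off, because $\bm e_{\sigma(i^*)}$ lies in the column space of $\bm U^T\bm Q_1$ but need not be a column of $\bm U^T\bm Q$ itself.
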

\begin{proof}

Let $\pi$ and $\bar{\pi}$ denote the permutations induced by $\bm\Pi$ and
$\bar{\bm\Pi}$, respectively. By assumption, $\bm\Omega\neq\bm\Psi$.
Write $\omega_i:=\bm\Omega(i,i)$ and $\psi_i:=\bm\Psi(i,i)$ for
$i\in[t_{n_B}]$. Choose $i^*$ as in Lemma~\ref{lem:first_mismatch_counting}: if a mismatch
occurs among the first $t_p$ entries, set $ i^*:=\min\{i\le t_p:\omega_i\neq\psi_i\};$ otherwise, set $i^*:=\max\{i>t_p:\omega_i\neq\psi_i\}.$ We distinguish two cases according to whether the value $\psi_{i^*}$ appears on the opposite side of $i^*$ in
$\bm\Omega$.

{\bf Case 1:} Either $i^* \le t_p$ and there exists $j>i^*$ such that
$\omega_j=\psi_{i^*}$, or $i^*>t_p$ and there exists $j<i^*$ such that
$\omega_j=\psi_{i^*}$. If $i^*\le t_p$, then by Lemma~\ref{lem:first_mismatch_counting},
$\omega_{i^*}<\psi_{i^*}=\omega_j$. Since $j>i^*$ and the diagonal entries of $\widehat{\bm B}$ are
nonincreasing, it suffices to rule out the possibility that
$b_{i^*}=b_j$. If $b_{i^*}=b_j$, then $i^*$ and $j$ belong to the same
equal-eigenvalue block of $\widehat{\bm B}$. This contradicts
$\omega_{i^*}<\omega_j$, since $\bm\Omega$ is sorted in nonincreasing order
within each block. Therefore, $b_{i^*}>b_j$.
If $i^*>t_p$, then by Lemma~\ref{lem:first_mismatch_counting},
$\omega_{i^*}>\psi_{i^*}=\omega_j$. A similar argument gives
$b_j>b_{i^*}$. Therefore, in both situations, we have
\begin{equation}\label{eq3:prop_saddle}
    \left( b_j - b_{i^*}\right)\left(\omega_{i^*}-\omega_j\right) > 0 .
\end{equation}


Set $i':=\bar\pi^{-1}(i^*)$ and $j':=\bar\pi^{-1}(j)$. Since
$\bar{\bm\Pi}$ is block diagonal with respect to the equal-eigenvalue
blocks of $\widehat{\bm B}$, we have $b_{i'}=b_{i^*}$ and $b_{j'}=b_j$.
Moreover, by \eqref{eq:Omega_def_rewrite},
\begin{equation}\label{eq2:prop_saddle}
\widetilde{\bm A}(i',i')=\omega_{i^*},
\quad
\widetilde{\bm A}(j',j')=\omega_j .
\end{equation}
Moreover, we define $\bm G(\theta):=\bm G(i',j',\theta)\in\mathbb R^{t_{n_B}\times t_{n_B}}$ and the Stiefel trajectory
\[
\bm Q(\theta) := \bm U\begin{bmatrix}\bar{\bm E}(\bm h_1)\bm \Pi \bm G(\theta)&\widehat{\bm E}(\bm h_2)\bm V_{n_B+1}\end{bmatrix}\bm V.
\]
Obviously, $\bm Q(0) = \bm Q$ due to $\bm G(0) = \bm I$, \eqref{eq1:prop saddle}, and \eqref{eq:defuv}, and $\bm Q(\theta) \in \mathrm{St}(d,K)$. 
Moreover, for a sufficiently small $\theta\neq0$, we obtain
\begin{align*}
F(\bm Q(\theta))-F(\bm Q)
&= \mathrm{Tr}\left(\bm{Q}(\theta)^T{\bm A}\bm{Q}(\theta){\bm B}\right) - \mathrm{Tr}(\bm{Q}^T{\bm A}\bm{Q}{\bm B})\notag \\
& = \mathrm{Tr}\left(\bm{G}(\theta)^T\widetilde{\bm{A}}
\bm{G}(\theta)
\widehat{\bm{B}}\right) - \mathrm{Tr}(\widetilde{\bm{A}}\widehat{\bm{B}}) \notag \\
& =(b_{j'}-b_{i'})\left(\widetilde{\bm A}(i',i')-\widetilde{\bm A}(j',j')\right)\sin^2\theta \notag \\
&= \left( b_j - b_{i^*}\right)\left(\omega_{i^*}-\omega_j\right)\sin^2{\theta} > 0,
\end{align*}
where the second equality uses \eqref{eq:defuv}, \eqref{eq:block b1}, and \eqref{eq:deftildeA}, the third equality uses Lemma~\ref{lem:givens_effect}  with $\bm D=\widetilde{\bm A}$ and $\bm C=\widehat{\bm B}$, the last equality uses \eqref{eq2:prop_saddle}, and the inequality follows from \eqref{eq3:prop_saddle}.
Let $\bm D:=\bm Q'(0)\in \mathrm{T}(\bm Q)$. Since
$\sin^2\theta=\theta^2+o(\theta^2)$ and $\bm Q$ is critical, the standard
second-derivative formula on the Stiefel manifold
\cite[(5.26), p.~107]{boumal2023optimization} yields
$
    \langle \bm D,\mathrm{Hess}\,F(\bm Q)[\bm D]\rangle>0 .
$
Hence $\bm Q$ is a strict saddle point.

{\bf Case 2:} Case~1 does not occur. Choose
$k\in[n_A]$ such that $\psi_{i^*}=a_{s_{k-1}+1}=\cdots=a_{s_k}$. Then there is no
$j>i^*$ with $\omega_j=\psi_{i^*}$ if $i^*\le t_p$, and no $j<i^*$ with
$\omega_j=\psi_{i^*}$ if $i^*>t_p$.  Hence Lemma~\ref{lem:first_mismatch_counting} gives
$h_{2,k}>0$. Choose the last column of $\widehat{\bm E}_k(\bm h_2)$, i.e., let $g_k:=\sum_{i=1}^{k}h_{2,i}$. Then $\widehat{\bm E}(\bm h_2)\bm e_{g_k}$ lies in the $k$-th eigenspace of $\bm A$.


Let $i':=\bar{\pi}^{-1}(i^*)$, so $i'$ and $i^*$ lie in the same block of \eqref{eq:Omega_def_rewrite} and hence $b_{i^*}=b_{i'}$. By \Cref{lem:first_mismatch_counting}, if $i^*\le t_p$ then $b_{i'}>0$ and $\psi_{i^*}>\omega_{i^*}$, while if $i^*>t_p$ then $b_{i'}<0$ and $\psi_{i^*}<\omega_{i^*}$. Then we have
\begin{align}\label{eq:case2_key_sign_rewrite}
b_{i'}\big(\psi_{i^*}-\omega_{i^*}\big)>0.
\end{align}
Set $\bm u:=\bar{\bm E}(\bm h_1)\bm e_{\pi^{-1}(i')}$ and
$\bm v:=\widehat{\bm E}(\bm h_2)\bm e_{g_k}$. Then
\[
\bm u^T\bm A\bm u=\widetilde{\bm A}(i',i')=\omega_{i^*},
\quad
\bm v^T\bm A\bm v=a_{s_k}=\psi_{i^*},
\]
where the identities for $\bm u$ follow from \eqref{eq:deftildeA},
\eqref{eq:Omega_def_rewrite}, and the choice $i'=\bar\pi^{-1}(i^*)$, while
the identity for $\bm v$ follows from the choice of $g_k$.
For $\theta\in\mathbb R$, define $\bar{\bm E}(\bm h_1,\theta)$ and
$\widehat{\bm E}(\bm h_2,\theta)$ by changing only the
$\pi^{-1}(i')$-th column of $\bar{\bm E}(\bm h_1)$ and the $g_k$-th
column of $\widehat{\bm E}(\bm h_2)$ as follows:
\[
\begin{aligned}
\bar{\bm E}(\bm h_1,\theta)_{:,\pi^{-1}(i')}
=
\bm u\cos\theta
+
\bm v\sin\theta,\quad \widehat{\bm E}(\bm h_2,\theta)_{:,g_k} =
-\bm u\sin\theta
+
\bm v\cos\theta,
\end{aligned}
\]
with all other columns unchanged.
Let
\[
\bm Q(\theta) :=
\bm U\begin{bmatrix}
    \bar{\bm E}(\bm h_1,\theta)\bm \Pi & \widehat{\bm E}(\bm h_2,\theta)\bm V_{n_B+1}
\end{bmatrix}\bm V.
\]
By \eqref{eq1:prop saddle} and \eqref{eq:defuv}, $\bm Q(0)=\bm Q$.
Since the two changed columns are obtained by an orthogonal rotation of the
orthonormal pair $\bm u,\bm v$, we also have
$\bm Q(\theta)\in\mathrm{St}(d,K)$. Using \eqref{eq:block b1}, \eqref{eq1:prop saddle}, \eqref{eq:defuv}, 
and the fact that the zero block of $\bm B$ contributes nothing, the objective
function can be written as
\[
F(\bm Q(\theta))
=
\sum_{l=1}^{t_{n_B}} b_l
\left[
\big(\bar{\bm E}(\bm h_1,\theta)\bm e_{\pi^{-1}(l)}\big)^T
\bm A
\big(\bar{\bm E}(\bm h_1,\theta)\bm e_{\pi^{-1}(l)}\big)
\right].
\]
Since only the $\pi^{-1}(i')$-th column of $\bar{\bm E}(\bm h_1)$ is
changed among the selected columns, all terms with $l\ne i'$ cancel in
$F(\bm Q(\theta))-F(\bm Q)$. Hence, for sufficiently small $\theta\ne0$,
\begin{align*}
F(\bm Q(\theta))-F(\bm Q)
&=
b_{i'}
\left[
(\bm u\cos\theta+\bm v\sin\theta)^T
\bm A
(\bm u\cos\theta+\bm v\sin\theta)
-
\bm u^T\bm A\bm u
\right]  \\
&=
b_{i'}
\left[
\bm u^T\bm A\bm u(\cos^2\theta-1)
+
\bm v^T\bm A\bm v\sin^2\theta
\right]  \\
&=
b_{i'}(\psi_{i^*}-\omega_{i^*})\sin^2\theta
>0,
\end{align*}
where the second equality uses $\bm u^T\bm A\bm v=0$, and the
last inequality follows from \eqref{eq:case2_key_sign_rewrite}.
As in Case~1, setting $\bm D:=\bm Q'(0)\in\mathrm{T}(\bm Q)$ and using
$\sin^2\theta=\theta^2+o(\theta^2)$, together with the criticality of
$\bm Q$, yields
$    \langle \bm D,\mathrm{Hess}\,F(\bm Q)[\bm D]\rangle>0.$ Thus $\bm Q$ is a strict saddle point.
\end{proof}

Building on \Cref{lem:givens_effect,lem:first_mismatch_counting,prop:global,prop:saddle}, we obtain the following result directly.

\begin{theorem}[\bf Benign optimization landscape of QPOC]\label{thm:land}
 Let $\bm h\in \widetilde{\mathcal{H}}$, $\bm \Pi\in \mathcal{P}^{t_{n_B}}$, and $\bm Q \in \mathcal{Q}_{\bm h,\bm \Pi}$ be arbitrary. 
Let $\bm\Psi$ and $\bm\Omega$ be defined as in \eqref{eq:defPsi} and
\eqref{eq:Omega_def_rewrite}, respectively. Suppose that $\bm\Psi=\bm\Omega$. Then $\bm Q$ is a global maximizer. 
Otherwise, $\bm Q$ is a strict saddle point. 
\end{theorem}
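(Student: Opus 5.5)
The plan is to assemble \Cref{thm:land} directly from the two propositions just proved, after first checking that the case split is exhaustive and covers every critical point. Any critical point $\bm Q$ of Problem~\eqref{eq:QPOC} belongs to $\mathcal{Q}$. By \Cref{thm:singular-case} in the singular case, or by \Cref{fact:1} in the nonsingular case (viewed as $t_{n_B}=K$ with the zero block absent), it lies in some $\mathcal{Q}_{\bm h,\bm\Pi}$ with $\bm h\in\widetilde{\mathcal H}$ and $\bm\Pi\in\mathcal P^{t_{n_B}}$. For that pair $(\bm h,\bm\Pi)$, the matrices $\bm\Psi$ in \eqref{eq:defPsi} and $\bm\Omega$ in \eqref{eq:Omega_def_rewrite} are well defined. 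The only subtlety is that $\bm\Omega$ involves a choice of $\bar{\bm\Pi}$. However, the blockwise sorted diagonal is unique, so $\bm\Omega$, and hence the dichotomy $\bm\Psi=\bm\Omega$ versus $\bm\Psi\neq\bm\Omega$, is independent of that choice.

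If $\bm\Psi=\bm\Omega$, I invoke \Cref{prop:global}. It shows that $F(\bm Q)$ attains the rearrangement upper bound \eqref{eq:global_upper_bound}, which is valid for every critical point. Since a global maximizer exists by compactness of $\mathrm{St}(d,K)$ and is itself critical, that bound equals the global maximum. Hence $\bm Q$ is a global maximizer.

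If $\bm\Psi\neq\bm\Omega$, I invoke \Cref{prop:saddle}. It produces a smooth curve $\bm Q(\theta)\subset\mathrm{St}(d,K)$ with $F(\bm Q(\theta))-F(\bm Q)=c\sin^2\theta$ for some $c>0$. Since $\bm Q$ is critical, the second derivative of $F\circ\bm Q$ at $\theta=0$ equals $\mathrm{Hess}\,F(\bm Q)[\bm D,\bm D]$ with $\bm D=\bm Q'(0)$, and this derivative is $2c>0$. A positive value forces $\bm D\neq\bm 0$, so $\bm Q$ is a strict saddle in the sense of \Cref{def:crit}(iv).

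The reductions made at the start of \Cref{sec:QPOC} must also be legitimate. The change of variables $\bm Q\mapsto\bm U_A^T\bm Q\bm U_B$ is an isometry of the Stiefel manifold, so it preserves critical points, global maximality, and the sign of the Riemannian Hessian quadratic form. Likewise, the shift $\bm A\mapsto\bm A+\alpha\bm I$ changes $F$ only by a constant on $\mathrm{St}(d,K)$, so it alters neither the gradient nor the Hessian there.

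The step most likely to need care is the Hessian identification, namely making the correspondence between the curve's second derivative and $\mathrm{Hess}\,F(\bm Q)[\bm D,\bm D]$ rigorous. In general, $\frac{d^2}{d\theta^2}F(\bm Q(\theta))$ equals the Hessian term plus $\langle\mathrm{grad}\,F(\bm Q),\bm Q''(0)\rangle$, and the second term vanishes here because $\mathrm{grad}\,F(\bm Q)=\bm 0$. This is exactly \cite[(5.26)]{boumal2023optimization}, so I would cite it. The degenerate cases $\bm B=\bm 0$ and $n_A=1$ make $F$ constant on $\mathrm{St}(d,K)$, so every point is trivially a global maximizer and they can be excluded at the outset.
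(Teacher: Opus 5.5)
Your proposal is correct and follows the paper's route. The paper obtains \Cref{thm:land} directly by invoking \Cref{prop:global} when $\bm\Psi=\bm\Omega$ and \Cref{prop:saddle} when $\bm\Psi\neq\bm\Omega$, which is exactly your argument. Your extra bookkeeping makes explicit several steps the paper leaves implicit: that $\bm\Omega$ does not depend on the choice of $\bar{\bm\Pi}$, the compactness argument showing the critical-point bound \eqref{eq:global_upper_bound} is the true global maximum, and the identification of the curve's second derivative with $\mathrm{Hess}\,F(\bm Q)[\bm D,\bm D]$ via \cite[(5.26)]{boumal2023optimization}.
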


We close this section with two comments. First, \Cref{thm:land} gives an
exact classification of all critical points of Problem~\eqref{eq:QPOC}:
the condition $\bm\Psi=\bm\Omega$ characterizes global maximizers, while every other critical point is a strict saddle point. Hence Problem~\eqref{eq:QPOC}
has a benign optimization landscape. Second, combined with strict-saddle avoidance
for randomly initialized first-order methods \cite{lee2019first} and the
\L{}ojasiewicz exponent $1/2$ for Problem~\eqref{eq:QPOC}
\cite{liu2019quadratic}, this result provides a theoretical explanation
for the observed convergence of retraction-based gradient methods to global
maximizers at a local linear rate. The same conclusion extends to the
general symmetric case by orthogonally diagonalizing $\bm B$, grouping its
nonzero eigenvalues by sign, and applying the shift
$\bm A\mapsto \bm A+\alpha\bm I$ for some scalar $\alpha$. If \(\bm B = \bm 0\) or $\bm A=\alpha\bm I_d$ for some scalar $\alpha$, the objective is constant on
\(\operatorname{St}(d,K)\), and every feasible point is a global maximizer.


\section{Application to Heteroscedastic Probabilistic PCA}\label{sec:PPCA} 

In this section, we apply the optimization landscape characterization of Problem~\eqref{eq:QPOC} to HePPCA. After introducing the model in Section \ref{subsec:setup}, we use the QPOC formulation of the population HePPCA problem from \cite{wang2023estimation} to establish its benign landscape in Section \ref{subsec:popu}. We then show in Section \ref{subsec:empi} that this benign landscape persists for the empirical problem under mild conditions.


\subsection{Problem Setup}\label{subsec:setup}

Consider the HePPCA problem with $L$ data groups, where the $l$-th group contains $n_l$ samples for each $l \in [L]$. For ease of exposition, let $n := \sum_{l=1}^L n_l$ and denote the $i$-th sample in the $l$-th group by $\bm y_{l,i} \in \R^d$. The samples $\{\bm y_{l,i}\} \subseteq \R^d$ are generated according to the model
\begin{align}\label{eq:HPCA}
\bm y_{l,i} = \bm Q^\star \mathrm{diag}(\sqrt{\bm \lambda}) \bm z_{l,i} + \bm \delta_{l,i},\quad \forall l \in [L],\ i \in [n_l],
\end{align}
where $\bm Q^\star \in \mathrm{St}(d,K)$ denotes the ground-truth subspace to be estimated, $\bm \lambda = (\lambda_1,\dots,\lambda_K)$ with $\lambda_1 > \cdots > \lambda_K > 0$ denotes the signal strengths, $\bm z_{l,i} \overset{i.i.d.}{\sim} \mathcal{N}(\bm 0, \bm I_K)$ are latent variables, and $\bm \delta_{l,i} \sim \mathcal{N}(\bm 0, v_l\bm I_d)$ are additive heteroscedastic noise terms with $v_l > 0$. We assume that the noise terms are independent across all samples and independent of all latent variables. In this work, we assume that the signal and noise strengths are known.  We note that \cite{hong2021heppcat} provides a practical procedure for estimating these parameters from observed data.

According to \cite{gilman2025semidefinite,hong2021heppcat}, a maximum likelihood estimator of the ground-truth subspace $\bm Q^\star$ can be obtained by solving the following non-convex problem:
\begin{align}\label{eq:QPOC1}
    \max_{\bm Q \in \R^{d\times K}} F(\bm Q) := \sum_{k=1}^K \bm q_k^T \bm A_k \bm q_k
    \qquad \mathrm{s.t.}\quad \bm Q \in \mathrm{St}(d,K),
\end{align}
where $\bm q_k$ denotes the $k$-th column of $\bm Q$, $\bm Y_l = [\bm y_{l,1},\dots,\bm y_{l,n_l}] \in \R^{d\times n_l}$ for all $l \in [L]$, and
\begin{align}\label{eq:Ak}
    w_{l,k} := \frac{\lambda_k}{\lambda_k+v_l},\quad
    \bm A_k := \sum_{l=1}^L \frac{w_{l,k}}{n v_l} \bm Y_l \bm Y_l^T.   
\end{align}
Using the data model above, the expectation of $\bm A_k$ is given as follows, whose proof is provided in \cite[Lemma 1]{wang2023estimation}.
\begin{lemma}\label{lem:EA}
For each $k \in [K]$, it holds that
\begin{align}\label{eq:EA}
       \E[\bm A_k] =  \left( \sum_{l=1}^L \frac{w_{l,k} n_l}{n v_l} \right) \bm Q^\star \mathrm{diag}(\bm \lambda) \bm Q^{\star T}  +  \left(\sum_{l=1}^L \frac{w_{l,k} n_l}{n} \right) \bm I_d. 
\end{align}
\end{lemma}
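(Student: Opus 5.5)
We compute $\E[\bm Y_l\bm Y_l^T]$ directly from the generative model \eqref{eq:HPCA} and then use linearity of expectation in the definition \eqref{eq:Ak} of $\bm A_k$. Since $\bm Y_l\bm Y_l^T=\sum_{i=1}^{n_l}\bm y_{l,i}\bm y_{l,i}^T$ and the samples in group $l$ are identically distributed, we have $\E[\bm Y_l\bm Y_l^T]=n_l\,\E[\bm y_{l,1}\bm y_{l,1}^T]$. The whole argument therefore reduces to computing the second-moment matrix of a single sample.

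For one sample, we expand
\[
\bm y_{l,i}\bm y_{l,i}^T=\bm Q^\star\mathrm{diag}(\sqrt{\bm\lambda})\bm z_{l,i}\bm z_{l,i}^T\mathrm{diag}(\sqrt{\bm\lambda})\bm Q^{\star T}+\bm Q^\star\mathrm{diag}(\sqrt{\bm\lambda})\bm z_{l,i}\bm\delta_{l,i}^T+\bm\delta_{l,i}\bm z_{l,i}^T\mathrm{diag}(\sqrt{\bm\lambda})\bm Q^{\star T}+\bm\delta_{l,i}\bm\delta_{l,i}^T.
\]
We then take expectations term by term.
\begin{itemize}
\item Since $\E[\bm z_{l,i}\bm z_{l,i}^T]=\bm I_K$, the first term has expectation $\bm Q^\star\mathrm{diag}(\bm\lambda)\bm Q^{\star T}$.
\item Because $\bm z_{l,i}$ and $\bm\delta_{l,i}$ are independent and both have zero mean, the two cross terms have expectation zero.
\item Since $\bm\delta_{l,i}\sim\mathcal N(\bm 0,v_l\bm I_d)$, the last term has expectation $v_l\bm I_d$.
\end{itemize}
Summing over the group gives
\[
\E[\bm Y_l\bm Y_l^T]=n_l\left(\bm Q^\star\mathrm{diag}(\bm\lambda)\bm Q^{\star T}+v_l\bm I_d\right).
\]

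Substituting this into $\bm A_k=\sum_l \frac{w_{l,k}}{nv_l}\bm Y_l\bm Y_l^T$ yields
\[
\E[\bm A_k]=\sum_{l=1}^L\frac{w_{l,k}n_l}{nv_l}\bm Q^\star\mathrm{diag}(\bm\lambda)\bm Q^{\star T}+\sum_{l=1}^L\frac{w_{l,k}n_l}{n}\bm I_d.
\]
Here $v_l$ cancels in the identity term. Grouping the scalar coefficients gives exactly \eqref{eq:EA}.

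There is no real obstacle; every step is a routine moment computation. The only point needing care is the vanishing of the cross terms. This uses the assumed independence of the noise from the latent variables, together with the zero means, and not merely their uncorrelatedness within a block.
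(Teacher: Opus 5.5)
Your computation is correct, and it is the standard argument. The paper does not reprove this lemma; it cites \cite[Lemma 1]{wang2023estimation}, which rests on the same identity $\E[\bm y_{l,i}\bm y_{l,i}^T]=\bm Q^\star\mathrm{diag}(\bm\lambda)\bm Q^{\star T}+v_l\bm I_d$ (the paper recalls this later) followed by linearity of expectation in \eqref{eq:Ak}. One small quibble with your closing remark: the cross terms vanish as soon as $\E[\bm z_{l,i}\bm\delta_{l,i}^T]=\bm 0$, so cross-uncorrelatedness of $\bm z_{l,i}$ and $\bm\delta_{l,i}$ is enough, and full independence is sufficient but not needed.
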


\subsection{Benign Optimization Landscape of Population HePPCA}\label{subsec:popu}
Replacing each $\bm A_k$ in Problem \eqref{eq:QPOC1} with its expectation $\E[\bm A_k]$ and dropping the resulting additive constant, define $\bm w := (w_1,\dots,w_K)$ with $w_k := \sum_{l=1}^L \frac{n_l w_{l,k}}{n v_l}$, and let
\[
    \bW:=\mathrm{diag}(\bm w),\quad \bLambda:=\mathrm{diag}(\bm\lambda).
\]
Then we obtain the following population objective:
\begin{align}\label{eq:QPOC2}
 \max_{\bm Q \in \mathbb{R}^{d\times K}} 
 G(\bm Q) := \mathrm{Tr}\left( \bm Q^T \bm Q^\star{\bLambda}\bm Q^{\star T} \bm Q \bW  \right)
 \qquad \mathrm{s.t.}\quad \bm Q \in \mathrm{St}(d,K).
\end{align}
It follows from $\lambda_1 > \cdots > \lambda_K$ and \eqref{eq:Ak} that $w_1 > \cdots > w_K$. We denote the critical point set of Problem \eqref{eq:QPOC2} by
\begin{align*}
    \mathcal{Q}_G := \left\{ \bm Q \in \mathrm{St}(d,K): \mathrm{grad}\ G(\bm Q) = \bm 0 \right\}. 
\end{align*} 
Thus, Problem \eqref{eq:QPOC2} is a special case of Problem~\eqref{eq:QPOC}. 
Moreover, let \(\bar{\bm Q}^\star\in\mathbb R^{d\times(d-K)}\) be any matrix
such that \([\bm Q^\star\ \bar{\bm Q}^\star]\in\mathcal O^d\). Then
\(\bm Q^\star \bLambda\bm Q^{\star T}\) admits the
eigen-decomposition
\[
    \bm Q^\star \bLambda\bm Q^{\star T}
    =
    \begin{bmatrix}
        \bm Q^\star & \bar{\bm Q}^\star
    \end{bmatrix}
    \begin{bmatrix}
        \bLambda & \bm 0 \\
        \bm 0 & \bm 0
    \end{bmatrix}
    \begin{bmatrix}
        \bm Q^\star & \bar{\bm Q}^\star
    \end{bmatrix}^T .
\]
Using this decomposition, $\lambda_1 > \cdots > \lambda_K > 0$, and \Cref{fact:1}, one can verify that the critical point set of Problem \eqref{eq:QPOC2} can be expressed as follows. Define
\begin{equation*}
    \mathcal{H}_G\! :=\! \left\{\!(h_1,\dots,h_{K+1})\!:\!\! \sum_{i=1}^{K+1} h_i\! =\! K,\ h_i\! \in\! \{0,1\},\ \!\forall i \!\in\! [K],\ h_{K+1}\!\in\! \{0,1,\dots,d\!-\!K\}\! \right\}.
\end{equation*}
In addition, for any $\bm h = (h_1, \ldots, h_K, h_{K+1}) \in \mathcal{H}_G$, we define \(\bm E(\bm h)\in\mathbb R^{d\times K}\) by \(\bm E(\bm h):=[\bm E_1(\bm h)\ \cdots\ \bm E_K(\bm h)\ \bm E_{K+1}(\bm h)]\),
where for each $i \in [K]$,
\begin{align*}
    \bm E_i(\bm h) := 
    \begin{cases}
        \bm e_i, & \text{if } h_i = 1,\\
        \emptyset, & \text{if } h_i = 0,
    \end{cases} \quad 
    \bm E_{K+1}(\bm h) := 
    \begin{cases}
        \begin{bmatrix}
            \bm e_{K+1} & \cdots & \bm e_{K+h_{K+1}} 
        \end{bmatrix}, & \text{if } h_{K+1} \neq 0,\\
        \emptyset, & \text{if } h_{K+1} = 0.
    \end{cases}
\end{align*}
Here, $\emptyset$ denotes an empty matrix of size $d \times 0$, which is omitted from the concatenation, and $\{\bm e_i\}_{i=1}^d \subseteq \mathbb{R}^d$ is the standard basis of $\mathbb{R}^d$. Using \Cref{fact:1} and the setup above, one can verify the following closed-form expression for the critical point set of Problem~\eqref{eq:QPOC2}.
\begin{proposition}\label{prop:crit}
It holds that \(\mathcal{Q}_G = \bigcup_{\bm h \in \mathcal{H}_G,\ \bm \Pi \in \mathcal{P}^K} \mathcal{Q}_{\bm h,\bm \Pi}\),
where every $\bm Q \in \mathcal{Q}_{\bm h,\bm \Pi}$ takes the form
\begin{equation}\label{eq:expresshpi 1}
\bm Q = [\bm Q^\star\ \bar{\bm Q}^\star]  \mathrm{BlkD}\left(\mathrm{diag}(\bm s),\bm S_{K+1} \right)  \bm E(\bm h)  \cdot \bm\Pi \cdot \mathrm{diag}(\bm t)
\end{equation}
for some $\bm s, \bm t \in \R^K$ with $s_i, t_i \in \{1,-1\}$ for each $i\in [K]$ and $\bm S_{K+1} \in \mathcal{O}^{d-K}$. 
\end{proposition}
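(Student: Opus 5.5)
We reduce Problem~\eqref{eq:QPOC2} to the canonical diagonal form of \Cref{sec:pre} and then read off the critical set from \Cref{fact:1}. Set $\bm U_A:=[\bm Q^\star\ \bar{\bm Q}^\star]\in\mathcal O^d$. Then $\bm A:=\bm Q^\star\bLambda\bm Q^{\star T}=\bm U_A\,\mathrm{BlkD}(\bLambda,\bm 0_{d-K})\,\bm U_A^T$, and $\bm B:=\bW$ is already diagonal. By the change of variables $\bm Q\mapsto\bar{\bm Q}=\bm U_A^T\bm Q$, which is a bijective isometry of $\mathrm{St}(d,K)$ mapping critical points to critical points, $\bm Q\in\mathcal Q_G$ if and only if $\bar{\bm Q}$ is a critical point of $\mathrm{Tr}(\bar{\bm Q}^T\bm\Lambda_A\bar{\bm Q}\bW)$ with $\bm\Lambda_A=\mathrm{diag}(\lambda_1,\dots,\lambda_K,0,\dots,0)$.

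Next we verify the hypotheses of \Cref{fact:1}. Since $\lambda_1>\cdots>\lambda_K>0$, the matrix $\bm\Lambda_A$ already has the block form \eqref{eq:block a}. Its blocks are $K$ singleton blocks with values $\lambda_i$ and one block of size $d-K$ with value $0$, so $n_A=K+1$, $s_i=i$ for $i\le K$, and $s_{K+1}=d$. If $d=K$, the last block is absent; this degenerate case is handled by the same argument with $h_{K+1}=0$.

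The matrix $\bW$ is nonsingular because every $w_k>0$. Its entries are distinct, since $w_1>\cdots>w_K$, so $n_B=K$ and every $t_j-t_{j-1}=1$. Hence \Cref{fact:1} applies and yields
\[
\bar{\bm Q}=\mathrm{BlkD}(\bm U_1,\dots,\bm U_{K+1})\,\bm E(\bm h)\,\bm\Pi\,\mathrm{BlkD}(\bm V_1^T,\dots,\bm V_K^T).
\]
Because $\bm U_i\in\mathcal O^1=\{\pm1\}$ for $i\le K$ and $\bm V_j\in\mathcal O^1=\{\pm1\}$, the first factor is $\mathrm{BlkD}(\mathrm{diag}(\bm s),\bm S_{K+1})$ with $\bm S_{K+1}:=\bm U_{K+1}\in\mathcal O^{d-K}$, and the last factor is $\mathrm{diag}(\bm t)$.

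It remains to match the index sets. The set $\mathcal H$ from \Cref{fact:1} consists of all $(h_1,\dots,h_{K+1})$ with $h_i\in\{0,1\}$ for $i\le K$, $h_{K+1}\in\{0,\dots,d-K\}$, and $\sum_i h_i=K$. This is exactly $\mathcal H_G$. Likewise, $\bm E(\bm h)$ from \Cref{sec:pre} selects $\bm e_i$ when $h_i=1$ and selects $\bm e_{K+1},\dots,\bm e_{K+h_{K+1}}$ from the last block, which coincides with the definition given just before the proposition. Multiplying on the left by $\bm U_A$ to undo the change of variables gives \eqref{eq:expresshpi 1}. Since \Cref{fact:1} is an equality of sets, this argument yields both inclusions.

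The only delicate point is bookkeeping rather than analysis. We must make sure that the $0$ eigenvalue of $\bm A$ forms its own block even when it is the smallest eigenvalue, which holds because $\lambda_K>0$. We must also make sure that \Cref{fact:1} needs only $\bm B$ nonsingular, with no requirement on $\bm A$; this is so, because the singular-$\bm B$ issue handled in \Cref{thm:singular-case} does not arise here.
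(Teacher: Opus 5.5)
Your proposal is correct and follows the paper's proof: both rotate by $[\bm Q^\star\ \bar{\bm Q}^\star]$ to reduce to the diagonal QPOC with $\mathrm{BlkD}(\bLambda,\bm 0)$ and $\bW$, apply \Cref{fact:1} using $\lambda_1>\cdots>\lambda_K>0$ and $w_1>\cdots>w_K>0$, and identify the $1\times 1$ orthogonal factors with the sign vectors $\bm s,\bm t$. Your explicit matching of $\mathcal H$ with $\mathcal H_G$, and of the two definitions of $\bm E(\bm h)$, only spells out bookkeeping that the paper leaves implicit.
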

\begin{proof}
Let \(\bm U_\star:=[\bm Q^\star\ \bar{\bm Q}^\star]\in\mathcal O^d\) and
set \(\widetilde{\bm Q}:=\bm U_\star^T\bm Q\). The map \(\bm Q\mapsto \bm U_\star^T\bm Q\) is a bijection from
\(\mathrm{St}(d,K)\) onto itself. Hence \(\bm Q\) is a critical point of
the population problem if and only if \(\widetilde{\bm Q}\) is a critical
point of the diagonal QPOC
\begin{equation}\label{eq:diag_QPOC}
    \max_{\widetilde{\bm Q}\in\mathbb R^{d\times K}}
    \operatorname{Tr}\!\left(
        \widetilde{\bm Q}^T
        \mathrm{BlkD}(\bLambda,\bm 0)
        \widetilde{\bm Q}
        \bW
    \right)\qquad \mathrm{s.t.} \quad 
    \widetilde{\bm Q}\in\mathrm{St}(d,K). 
\end{equation}
Since \(\lambda_1>\cdots>\lambda_K>0\) and \(w_1>\cdots>w_K>0\),
applying \Cref{fact:1} to  \eqref{eq:diag_QPOC} gives
\[
    \widetilde{\bm Q}
    =
    \mathrm{BlkD}\left(\mathrm{diag}(\bm s),\bm S_{K+1}\right)
    \bm E(\bm h)\bm\Pi\mathrm{diag}(\bm t),
\]
where \(\bm h\in\mathcal H_G\), \(\bm\Pi\in\mathcal P^K\),
\(s_i,t_i\in\{1,-1\}\) for \(i\in[K]\), and
\(\bm S_{K+1}\in\mathcal O^{d-K}\).  Substituting this into
\(\bm Q=\bm U_\star\widetilde{\bm Q}\) gives \eqref{eq:expresshpi 1}. The converse inclusion follows
from the same characterization in \Cref{fact:1}. Hence the stated  
representation of \(\mathcal Q_G\) holds.
\end{proof}

We next show that, for fixed $\bm h \in \mathcal{H}_G$ and $\bm \Pi \in \mathcal{P}^K$, all points in $\mathcal{Q}_{\bm h,\bm \Pi}$ have the same type of criticality.
\begin{lemma}\label{lem:equallemma}
Let $\bm h \in \mathcal{H}_G$ and $\bm \Pi \in \mathcal{P}^K$ be arbitrary, and let $\bm Q, \widehat{\bm Q} \in \mathcal{Q}_{\bm h, \bm \Pi}$. Then $\bm Q$ is a local minimizer, local maximizer, global minimizer, global maximizer, or strict saddle point if and only if $\widehat{\bm Q}$ has the same classification.
\end{lemma}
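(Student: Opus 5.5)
The plan is to exhibit an isometry of the Stiefel manifold that leaves $G$ invariant and maps $\bm Q$ to $\widehat{\bm Q}$. The classifications in \Cref{def:crit} are then transported along it.

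By \Cref{prop:crit}, write
\[
\bm Q=\bm U_\star\,\mathrm{BlkD}(\mathrm{diag}(\bm s),\bm S_{K+1})\,\bm E(\bm h)\bm\Pi\,\mathrm{diag}(\bm t)
\]
and
\[
\widehat{\bm Q}=\bm U_\star\,\mathrm{BlkD}(\mathrm{diag}(\hat{\bm s}),\widehat{\bm S}_{K+1})\,\bm E(\bm h)\bm\Pi\,\mathrm{diag}(\hat{\bm t}),
\]
with $\bm U_\star=[\bm Q^\star\ \bar{\bm Q}^\star]$. Define the orthogonal matrices
\[
\bm L:=\bm U_\star\,\mathrm{BlkD}\big(\mathrm{diag}(\hat{\bm s}\circ\bm s),\widehat{\bm S}_{K+1}\bm S_{K+1}^T\big)\,\bm U_\star^T\in\mathcal O^d,
\qquad
\bm R:=\mathrm{diag}(\bm t\circ\hat{\bm t})\in\mathcal O^K,
\]
using $s_i^2=t_i^2=1$. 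Then $\widehat{\bm Q}=\bm L\bm Q\bm R$. This identity is checked by direct substitution: the factors $\mathrm{diag}(\bm s)$ and $\bm S_{K+1}$ cancel, and $\bm R$ multiplies $\mathrm{diag}(\bm t)$ to give $\mathrm{diag}(\hat{\bm t})$. Diagonal matrices commute, so no interaction with $\bm\Pi$ arises.

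Next consider the map $\Phi(\bm X):=\bm L\bm X\bm R$. It is a linear bijective isometry of $\R^{d\times K}$ (Frobenius norm) and maps $\mathrm{St}(d,K)$ onto itself. We check that $G\circ\Phi=G$.

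First, $\bm L^T\bm Q^\star\bLambda\bm Q^{\star T}\bm L=\bm Q^\star\bLambda\bm Q^{\star T}$. Indeed, $\bm U_\star^T\bm Q^\star\bLambda\bm Q^{\star T}\bm U_\star=\mathrm{BlkD}(\bLambda,\bm 0)$, and the block-diagonal factor of $\bm L$ commutes with $\mathrm{BlkD}(\bLambda,\bm 0)$ after conjugation. This holds because the diagonal sign matrix commutes with the diagonal $\bLambda$ and the lower block is zero.

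Second, $\bm R\bW\bm R^T=\bW$, since both matrices are diagonal.

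Hence
\[
G(\bm L\bm X\bm R)=\mathrm{Tr}(\bm R^T\bm X^T\bm L^T\bm M\bm L\bm X\bm R\bW)=\mathrm{Tr}(\bm X^T\bm M\bm X\bm R\bW\bm R^T)=G(\bm X),
\qquad \bm M:=\bm Q^\star\bLambda\bm Q^{\star T}.
\]

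Finally, transport each classification. Since $\Phi$ is a homeomorphism of $\mathrm{St}(d,K)$ preserving $G$, global and local extremality (both maximizer and minimizer) are preserved, because neighborhoods map to neighborhoods. For strict saddles, $\Phi$ is linear and hence equal to its own differential: $\bm D\mapsto\bm L\bm D\bm R$ maps $\mathrm{T}(\bm Q)$ onto $\mathrm{T}(\widehat{\bm Q})$, as $\mathrm{sym}(\widehat{\bm Q}^T\bm L\bm D\bm R)=\bm R^T\mathrm{sym}(\bm Q^T\bm D)\bm R$. Because $G\circ\Phi=G$ and $\Phi$ is an isometry, the Riemannian gradient and Hessian are equivariant, giving
\[
\mathrm{Hess}\,G(\widehat{\bm Q})[\bm L\bm D\bm R,\bm L\bm D\bm R]=\mathrm{Hess}\,G(\bm Q)[\bm D,\bm D].
\]
One can confirm this directly from \eqref{eq:Re He} using $\nabla G(\bm L\bm X\bm R)=\bm L\nabla G(\bm X)\bm R$, or more simply via a curve $\bm Q(\theta)$ and its image $\Phi(\bm Q(\theta))$, since second derivatives of $G$ along curves through critical points agree. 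Therefore a positive-curvature direction at $\bm Q$ maps to one at $\widehat{\bm Q}$. The reverse implications follow by symmetry, using $\Phi^{-1}$.

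The only delicate step is verifying the invariance $\bm L^T\bm M\bm L=\bm M$; everything else is routine.
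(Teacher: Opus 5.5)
Your proposal is correct and follows essentially the same route as the paper: a linear map $\bm X\mapsto \bm L\bm X\bm R$ with orthogonal factors that is an isometry of $\mathrm{St}(d,K)$ and leaves $G$ invariant, along which all the classifications transport. The only cosmetic difference is that the paper maps the canonical representative $[\bm Q^\star\ \bar{\bm Q}^\star]\bm E(\bm h)\bm\Pi$ to each of $\bm Q$ and $\widehat{\bm Q}$ separately, whereas your map is the composition of those two maps and sends $\bm Q$ to $\widehat{\bm Q}$ directly.
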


\begin{proof}
By \Cref{prop:crit}, $\bm Q$ can be written in the form \eqref{eq:expresshpi 1} for some $\bm s, \bm t \in \R^K$ with $s_i,t_i\in\{1,-1\}$ for each $i\in[K]$ and some $\bm S_{K+1}\in \mathcal{O}^{d-K}$. 
Set $ \bm Q' := [\bm Q^\star\ \bar{\bm Q}^\star]\bm E(\bm h)\bm\Pi .$ Define the linear mapping $\psi$ by
\[
    \psi(\bm Z)
    := [\bm Q^\star\ \bar{\bm Q}^\star]
    \mathrm{BlkD}\left(\mathrm{diag}(\bm s),\bm S_{K+1}\right)
    [\bm Q^\star\ \bar{\bm Q}^\star]^T
    \bm Z \mathrm{diag}(\bm t).
\]
Then $\psi(\bm Q')=\bm Q$. Moreover, since the matrices multiplying $\bm Z$ from the left and right are orthogonal, $\psi$ is an isometric bijection from $\mathrm{St}(d,K)$ onto itself. 

Now, we verify that $G(\psi(\bm Z)) = G(\bm Z)$ holds for all $\bm Z$: 
\begin{align*}
    G(\psi(\bm Z)) & = \mathrm{Tr}\left(\psi(\bm Z)^T\bm Q^\star  { \bLambda} \bm Q^{\star T} \psi(\bm Z)  { \bW}\right)\\
    &=\mathrm{Tr}\left(\psi(\bm Z)^T[\bm Q^\star\ {\bar{\bm Q}^\star}]\blk\left({ \bLambda},\bm 0\right) [\bm Q^\star\ {\bar{\bm Q}^\star}]^T\psi(\bm Z){ \bW}\right)\\
    &=\mathrm{Tr}\left(\mathrm{diag}(\bm t)  \bm Z^T [\bm Q^\star\ {\bar{\bm Q}^\star}] \mathrm{BlkD}\left(\mathrm{diag}(\bm s),\bm S_{K+1}^T\right)\blk\left({ \bLambda},\bm 0\right)\right.\\
    &\quad\quad\quad~~~\left. \mathrm{BlkD}\left(\mathrm{diag}(\bm s),\bm S_{K+1}\right)[\bm Q^\star\ {\bar{\bm Q}^\star}]^T \bm Z  \mathrm{diag}(\bm t){ \bW}\right)\\
    &=\mathrm{Tr}\left(\bm Z^T[\bm Q^\star\ {\bar{\bm Q}^\star}]\ \blk\left({ \bLambda},\bm 0\right)\ [\bm Q^\star\ {\bar{\bm Q}^\star}]^T \bm Z\ { \bW}\right)= G(\bm Z),
\end{align*}
where the fourth equality uses $s_i,t_i \in \{1,-1\}$ for each $i \in [K]$. 


Since \(\psi\) is an isometry of \(\mathrm{St}(d,K)\) and \(G\circ\psi=G\),
it preserves local and global optimality. Moreover, since \(\psi\) is a Riemannian isometry and \(G\circ\psi=G\),
the Riemannian Hessian quadratic form is invariant under \(\psi\). Indeed,
for any geodesic \(\gamma\) with \(\gamma(0)=\bm Z\) and
\(\gamma'(0)=\bm D\), the curve \(\psi\circ\gamma\) is a geodesic with
initial velocity \(\mathrm{D}\psi(\bm Z)[\bm D]\), and
\(G(\psi(\gamma(t)))=G(\gamma(t))\). Taking second derivatives at
\(t=0\) gives the claim.
Applying the same argument to $\widehat{\bm Q}$ gives the claim.  
\end{proof}

Using this lemma, to determine the type of each critical point in 
$\mathcal{Q}_{\bm h,\bm \Pi}$, it suffices to study the representative 
$\bm Q \in \mathcal{Q}_{\bm h,\bm \Pi}$ of the form
\begin{align}\label{eq:Q}
    \bm Q = \left[\bm Q^\star\ \bar{\bm Q}^\star\right] \cdot \bm E(\bm h) \cdot \bm\Pi.
\end{align}
Using \Cref{thm:land}, we directly obtain the optimization landscape of Problem \eqref{eq:QPOC2} and explicitly characterize its set of global maximizers as follows.
\begin{proposition}\label{prop:opt}
Each critical point of Problem \eqref{eq:QPOC2} is either a global maximizer or a strict saddle point. In particular, the set of global maximizers is 
\begin{align}\label{eq:opti}
    \mathcal{Q}_{\mathrm{opt}}: = \left\{ \bm Q^\star \mathrm{diag}(\bm s): s_i \in \{1,-1\},\ i \in [K]\right\}.  
\end{align}
\end{proposition}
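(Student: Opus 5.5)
The plan is to reduce Problem~\eqref{eq:QPOC2} to the diagonal QPOC \eqref{eq:diag_QPOC} via $\widetilde{\bm Q}=\bm U_\star^T\bm Q$ with $\bm U_\star=[\bm Q^\star\ \bar{\bm Q}^\star]$, and then apply \Cref{thm:land}. Since $\bm Q\mapsto\bm U_\star^T\bm Q$ is an isometric bijection of $\mathrm{St}(d,K)$ preserving the objective, it maps critical points, global maximizers and strict saddle points to the same classes. In \eqref{eq:diag_QPOC} the matrix $\bm B=\bW$ is nonsingular with $w_1>\cdots>w_K>0$. Hence $t_{n_B}=K$, there is no zero block, $p=n_B=K$, and every block of $\bm B$ has size one. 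On the other hand, $\bm A=\mathrm{BlkD}(\bLambda,\bm 0)$ has $K$ simple positive eigenvalues followed by the eigenvalue $0$ of multiplicity $d-K$. The shift $\bm A\mapsto\bm A+\alpha\bm I$ needed to make $\bm A\succ\bm 0$ only adds a constant and does not change the ordering or the block structure. The first claim, that every critical point is a global maximizer or a strict saddle point, then follows at once from \Cref{thm:land}, together with \Cref{prop:crit} and \Cref{fact:1}.

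For the description of the maximizers, I will compute $\bm\Psi$ and $\bm\Omega$ in this setting. Because $p=K=t_{n_B}$, definition \eqref{eq:defPsi} gives $\bm\Psi=\mathrm{diag}(\lambda_1,\dots,\lambda_K)$ up to the common shift. Because all blocks of $\bW$ are singletons, $\bar{\bm\Pi}=\bm I$ and $\bm\Omega=\widetilde{\bm A}=\bm\Pi^T\bm E(\bm h)^T\mathrm{BlkD}(\bLambda,\bm 0)\bm E(\bm h)\bm\Pi$. The condition $\bm\Psi=\bm\Omega$ therefore requires the selected diagonal entries to be exactly $\lambda_1,\dots,\lambda_K$, placed in this order. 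Since the $\lambda_i>0$ are distinct and strictly exceed the eigenvalue $0$, this forces $h_1=\cdots=h_K=1$ and $h_{K+1}=0$, so that $\bm E(\bm h)=[\bm e_1\ \cdots\ \bm e_K]$. It also forces $\bm\Pi=\bm I$, because $\mathrm{diag}(\bm\lambda)$ has distinct entries and only the identity permutation fixes it.

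Substituting into \eqref{eq:expresshpi 1}, the column block $\bar{\bm Q}^\star\bm S_{K+1}$ is not selected, since $h_{K+1}=0$. This gives $\bm Q=\bm Q^\star\mathrm{diag}(\bm s)\mathrm{diag}(\bm t)=\bm Q^\star\mathrm{diag}(\bm s\circ\bm t)$, which lies in $\mathcal Q_{\mathrm{opt}}$. Conversely, every $\bm Q^\star\mathrm{diag}(\bm s)$ is of this form with $\bm t=\bm 1$, so it is a critical point satisfying $\bm\Psi=\bm\Omega$ and hence a global maximizer. Every global maximizer is a critical point, and by the dichotomy it must satisfy $\bm\Psi=\bm\Omega$; otherwise it would be a strict saddle point, and a strict saddle cannot be a global maximizer because moving along the positive-curvature direction increases $G$. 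This yields \eqref{eq:opti}.

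The main obstacle is the bookkeeping when the critical points are transferred from the abstract form in \Cref{thm:land} to the form in \Cref{prop:crit}. In particular, the isometry $\widetilde{\bm Q}\mapsto\bm U_\star\widetilde{\bm Q}$ must be checked to preserve the strict-saddle property, which follows by the same geodesic argument used in \Cref{lem:equallemma}. It must also be checked that the sign freedoms $\bm s$ and $\bm t$, and the rotation $\bm S_{K+1}$, never affect whether $\bm\Psi=\bm\Omega$ holds.
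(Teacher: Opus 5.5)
Your proposal is correct. The first claim is handled exactly as in the paper: you transfer the problem to the diagonal QPOC \eqref{eq:diag_QPOC} and invoke \Cref{thm:land}.

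For the description of the maximizers you take a different route. The paper uses \Cref{lem:equallemma} to reduce to the representatives \eqref{eq:Q}. It evaluates $G$ there as $\mathrm{Tr}(\bm\Pi^T\bm E(\bm h)^T\mathrm{diag}(\bm\mu)\bm E(\bm h)\bm\Pi\bW)$ and reads off the maximizers from the rearrangement inequality and its equality case $\bm E(\bm h)\bm\Pi=[\bm I_K\ \bm 0]^T$. This second step needs nothing from the dichotomy beyond the fact that global maximizers are critical points, and it gives the optimal value $\sum_k\lambda_k w_k$ explicitly.

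You instead treat $\bm\Psi=\bm\Omega$ as an exact characterization. That is legitimate, since the proof of \Cref{prop:saddle} builds a curve along which the objective strictly increases, so no critical point with $\bm\Psi\neq\bm\Omega$ can be a maximizer. You then solve $\bm\Psi=\bm\Omega$ in the special case $p=n_B=K$ with singleton blocks, where it forces $\bm h=(1,\dots,1,0)$ and $\bm\Pi=\bm I$.

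Because $\bm\Psi$ and $\bm\Omega$ depend only on $(\bm h,\bm\Pi)$, you do not need \Cref{lem:equallemma}. The signs $\bm s,\bm t$ and the rotation $\bm S_{K+1}$ are then automatically irrelevant, which settles the bookkeeping concern you raise at the end.

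The price is reliance on the full machinery of \Cref{prop:global,prop:saddle}, including the shift making $\bm A\succ\bm 0$, which you handle correctly. The paper's direct computation is more elementary, and in your route the equality case of the rearrangement inequality is only used implicitly inside those propositions.
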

\begin{proof}
The first statement follows from \Cref{thm:land}. It remains to characterize the global maximizers. By \Cref{lem:equallemma}, it suffices to consider \eqref{eq:Q}. Let $\bm\mu := (\lambda_1,\ldots,\lambda_K,0,\ldots,0)\in\mathbb{R}^d$. For any $\bm h \in \mathcal{H}_G$ and $\bm \Pi \in \mathcal{P}^K$, substituting \eqref{eq:Q} into \eqref{eq:QPOC2} yields
\begin{align*}
    G(\bm Q)
    &=  \mathrm{Tr}\left(\bm \Pi^T \bm{E}^T(\bm h) 
    \mathrm{diag}(\bm\mu) \bm E(\bm h)\bm \Pi { \bW}\right).
\end{align*}
Since $\lambda_1>\cdots>\lambda_K>0$ and $w_1>\cdots>w_K>0$, the rearrangement inequality implies
$
    G(\bm Q) \le \sum_{k=1}^K \lambda_k w_k,
$
and equality holds for \eqref{eq:Q} if and only if $ \bm E(\bm h)\bm \Pi =
    \begin{bmatrix}
        \bm I_K&        \bm 0
    \end{bmatrix}^T.$
Substituting this condition into \eqref{eq:expresshpi 1}, we obtain $ \bm Q = \bm Q^\star \mathrm{diag}(\bm s)\mathrm{diag}(\bm t).$ Since $s_i,t_i\in\{1,-1\}$ for each $i\in[K]$, this is equivalent to the form in \eqref{eq:opti}. 
\end{proof}

Building on \Cref{thm:land} and \Cref{prop:opt}, we further show that the population objective has a negative-definite Riemannian Hessian at every global maximizer and a positive-curvature direction at every other critical point.
\begin{proposition}
\label{prop:opt_saddle_G}
The following hold for the objective \(G\) in Problem \eqref{eq:QPOC2}:
\begin{enumerate}
\item[(i)] There exists $\delta_1>0$ such that, for every $\bm Q \in \mathcal{Q}_{\mathrm{opt}}$ and every $\bm D \in \mathrm{T}(\bm Q)$,
\begin{align}\label{eq:opt}
    \left\langle\bm D, \mathrm{Hess}\ G(\bm Q)[\bm D]\right\rangle
    \le -\delta_1 \|\bm D\|_F^2.
\end{align}
\item[(ii)] 
There exists $\delta_2>0$ such that, for every $\bm Q \in \mathcal{Q}_G \setminus \mathcal{Q}_{\mathrm{opt}}$, there exists a nonzero $\bm D \in \mathrm{T}(\bm Q)$ satisfying
\begin{align}\label{eq:strict-saddle}
    \left\langle\bm D, \mathrm{Hess}\ G(\bm Q)[\bm D]\right\rangle 
    \ge \delta_2 \|\bm D\|_F^2. 
\end{align}
\end{enumerate}
\end{proposition}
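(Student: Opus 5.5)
By \Cref{lem:equallemma} and the invariance argument in its proof, the Riemannian Hessian quadratic form is transported isometrically by $\psi$. So for both parts it suffices to work with the representatives \eqref{eq:Q}, $\bm Q=[\bm Q^\star\ \bar{\bm Q}^\star]\bm E(\bm h)\bm\Pi$, and there are only finitely many of them ($\bm h\in\mathcal H_G$, $\bm\Pi\in\mathcal P^K$). After the rotation $\widetilde{\bm Q}=[\bm Q^\star\ \bar{\bm Q}^\star]^T\bm Q$, we work with the diagonal problem \eqref{eq:diag_QPOC}, where $\bm A=\mathrm{BlkD}(\bLambda,\bm 0)$ and $\bm B=\bW$.

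\emph{Part (i).} Take $\widetilde{\bm Q}=[\bm I_K;\bm 0]$. Here $\nabla G=2\bm A\widetilde{\bm Q}\bW$, and $\mathrm{sym}(\widetilde{\bm Q}^T\nabla G)=2\bLambda\bW$. A tangent vector has the form $\bm D=[\bm\Omega;\bm Z]$ with $\bm\Omega$ skew-symmetric and $\bm Z\in\R^{(d-K)\times K}$. Using \eqref{eq:Re He}, and noting that $\langle\bm D,\mathrm{Proj}(\cdot)\rangle=\langle\bm D,\cdot\rangle$, we get
\[
\langle\bm D,\mathrm{Hess}\,G[\bm D]\rangle=2\mathrm{Tr}(\bm D^T\bm A\bm D\bW)-2\mathrm{Tr}(\bm D^T\bm D\bLambda\bW).
\]
Expanding this in coordinates gives
\[
2\sum_{i\ne j}\omega_{ij}^2(\lambda_i w_j-\lambda_j w_j)\;-\;2\sum_{j}\|\bm z_j\|^2\lambda_j w_j .
\]
I would pair the terms $(i,j)$ and $(j,i)$ using $\omega_{ji}=-\omega_{ij}$. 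Each pair contributes $2\omega_{ij}^2\bigl(\lambda_iw_j-\lambda_jw_j+\lambda_jw_i-\lambda_iw_i\bigr)=-2\omega_{ij}^2(\lambda_i-\lambda_j)(w_i-w_j)$, which is strictly negative because both $\bm\lambda$ and $\bm w$ are strictly decreasing. We also have $\|\bm D\|_F^2=2\sum_{i<j}\omega_{ij}^2+\|\bm Z\|_F^2$. This gives
\[
\delta_1=\min\Bigl\{\min_{i<j}(\lambda_i-\lambda_j)(w_i-w_j),\;2\lambda_Kw_K\Bigr\}>0 .
\]
The constant is independent of the sign matrix $\mathrm{diag}(\bm s)$, by the isometry.

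\emph{Part (ii).} Each $\bm Q\in\mathcal Q_G\setminus\mathcal Q_{\mathrm{opt}}$ is a strict saddle by \Cref{prop:opt} and \Cref{thm:land}. By \Cref{lem:equallemma} together with the Hessian invariance, the curvature value at $\bm Q$ equals the curvature value at its representative \eqref{eq:Q}, with the direction mapped through $\mathrm{D}\psi$, which preserves $\|\bm D\|_F$. So I would set
\[
\delta_2:=\min_{(\bm h,\bm\Pi)\text{ non-optimal}}\ \max_{\bm D\in\mathrm{T}(\bm Q_{\bm h,\bm\Pi}),\ \|\bm D\|_F=1}\langle\bm D,\mathrm{Hess}\,G(\bm Q_{\bm h,\bm\Pi})[\bm D]\rangle .
\]
This is a minimum over finitely many pairs of strictly positive numbers, hence $\delta_2>0$. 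Because the form is homogeneous of degree two in $\bm D$, the bound extends to $\delta_2\|\bm D\|_F^2$.

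\emph{Main obstacle.} The delicate point is making rigorous that every point in $\mathcal Q_{\bm h,\bm\Pi}$, including the continuum generated by $\bm S_{K+1}$, inherits exactly the representative's curvature value with a norm-preserving direction. I would handle this by observing that $\mathrm{D}\psi(\bm Z)[\bm D]=\bm R\bm D\,\mathrm{diag}(\bm t)$ with $\bm R$ orthogonal, so $\mathrm{D}\psi$ is linear and preserves the Frobenius norm, and then applying the second-derivative argument along geodesics from \Cref{lem:equallemma}. Alternatively, the positive curvature can be made explicit using the Givens or column-rotation curves from \Cref{prop:saddle}, which give $F(\bm Q(\theta))-F(\bm Q)=c\sin^2\theta$. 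Their velocities $\bm D$ are unit-norm (they rotate one orthonormal pair of columns), and the curvature equals $2c$. Here $c$ ranges over finitely many positive values of the form $(w_j-w_i)(\lambda_i-\lambda_j)$ or $w_i\lambda_j$, whose minimum directly gives $\delta_2$.
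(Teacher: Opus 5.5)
Your proposal is correct. Part (i) follows the paper's computation: your block form $[\bm\Omega;\bm Z]$ is the paper's split $\bm D=(\bm I-\bm Q^\star\bm Q^{\star T})\bm D+\bm Q^\star\widehat{\bm D}$ after rotating by $[\bm Q^\star\ \bar{\bm Q}^\star]^T$. Your constant $\min_{i<j}(\lambda_i-\lambda_j)(w_i-w_j)$ is marginally sharper than the paper's, and you should set it to $+\infty$ when $K=1$.

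Part (ii) takes a different route. The paper defines $\rho(\bm Q)$ as the largest Hessian Rayleigh quotient on the unit sphere of $\mathrm{T}(\bm Q)$. It then uses compactness of $\mathcal Q_G\setminus\mathcal Q_{\mathrm{opt}}$ and continuity of $\rho$, and sets $\delta_2=\frac12\min\rho$. You instead use that $\psi$ from \Cref{lem:equallemma} is a linear orthogonal map of $\mathbb R^{d\times K}$ that preserves $\mathrm{St}(d,K)$ and satisfies $G\circ\psi=G$. Hence $\psi$ is equivariant for $\nabla G$, $\nabla^2G$ and the tangent projection, so $\rho$ is constant on each class $\mathcal Q_{\bm h,\bm\Pi}$. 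Since $\mathcal H_G\times\mathcal P^K$ is finite, $\rho$ takes only finitely many positive values on the saddle set.

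What each approach buys: your version avoids having to justify continuity of $\rho$ over moving tangent spaces, and it gives an attained constant without the factor $\frac12$. The paper's version does not need orbit-wise constancy of the curvature, and it would apply to any compact saddle set on which $\rho$ is continuous and positive.

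There is a small slip in your fallback remark. For the Case~1 Givens curve in \Cref{prop:saddle}, $\bm Q'(0)$ has two nonzero unit columns, so $\|\bm D\|_F^2=2$ and the normalized curvature is $c$, not $2c$. Also, the attainable values of $c$ are $(w_a-w_b)(\mu-\mu')$ and $w_a(\mu-\mu')$, with $a<b$ and $\mu>\mu'$ eigenvalues of $\mathrm{BlkD}(\bLambda,\bm 0)$ (so $\mu'=0$ is possible). This is broader than the two forms you list. None of this affects positivity or finiteness, and your main argument does not depend on it.
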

\begin{proof}
(i) It suffices to consider $\bm Q=\bm Q^\star$, since the map 
$\bm Q\mapsto \bm Q\mathrm{diag}(\bm s)$ is an isometry on $\mathrm{St}(d,K)$ and leaves $G$ invariant for any $\bm s\in\{1,-1\}^K$.

For any direction $\bm D  \in \mathrm{T}(\bm Q^\star)$, decompose it as
\begin{align*}
    \bm D = \underbrace{(\bm I - \bm Q^\star \bm Q^{\star T})\bm D}_{\bm D_1} + \underbrace{\bm Q^\star \bm Q^{\star T}\bm D}_{\bm D_2}.
\end{align*}
Let $\widehat{\bm D} := \bm Q^{\star T} \bm D$. Since $\bm D\in \mathrm{T}(\bm Q^\star)$, $\widehat{\bm D}$ is skew-symmetric and $\bm D_2=\bm Q^\star\widehat{\bm D}$. We have
\begin{align*}
    \nabla G(\bm Q) &= 2\bm Q^{\star} \bLambda \bm Q^{\star T} \bm Q \bW,\ \nabla^2 G(\bm Q)[\bm D]=2\bm Q^{\star} \bLambda \bm Q^{\star T} \bm D \bW.
\end{align*}
Therefore, by the formula for the Riemannian Hessian on the Stiefel manifold,
\begin{align*}
    \mathrm{Hess}\ G(\bm Q^\star)[\bm D]
    &= \mathrm{Proj}_{\mathrm{T}(\bm Q^\star)}
    \left(\nabla^2 G(\bm Q^\star)[\bm D]
    - \bm D \mathrm{sym}\left(\bm Q^{\star T} \nabla G(\bm Q^\star)\right)\right) \\
    &=2\left(\bm Q^\star \bLambda\widehat{\bm D} \bW
    - \bm D\bLambda\bW \right) 
    -2\bm Q^\star\mathrm{sym}\left(\bLambda \widehat{\bm D} \bW - \widehat{\bm D}{\bLambda\bW}
    \right).
\end{align*}
For any matrix $\bm M$, we have $\langle \bm D, \bm Q^\star \mathrm{sym}(\bm M)\rangle
    = \langle \widehat{\bm D}, \mathrm{sym}(\bm M)\rangle = 0,$ where the second equality follows from the fact that the inner product of a symmetric matrix and a skew-symmetric matrix is zero. Hence,
\begin{align}\label{eq:inner_product}
    \left\langle\bm D,\mathrm{Hess}\ G(\bm Q^\star)[\bm D]\right\rangle
    =2\mathrm{Tr}\left(
    \widehat{\bm D}^T{\bLambda}\widehat{\bm D}\bW
    - (\bm D_1^T \bm D_1 + \widehat{\bm D}^T\widehat{\bm D})\bLambda\bW\right).
\end{align}
Since $\lambda_1 > \cdots > \lambda_K$ and $w_1 > \cdots > w_K$, we first have
\begin{align}
\label{eq:part1_inequality}
-\mathrm{Tr}\left(\bm{D}_1^T \bm{D}_1 \bLambda\bW\right)
\le -\lambda_K w_K \| \bm{D}_1 \|_F^2.
\end{align}
Writing $\widehat{\bm D}=(\widehat d_{ij})_{i,j=1}^K$, and using the
skew-symmetry of $\widehat{\bm D}$, we compute
\begin{align}\label{eq:part2_inequality}
\mathrm{Tr}\left(\widehat{\bm{D}}^T \bLambda \widehat{\bm{D}} \bW
- \widehat{\bm{D}}^T \widehat{\bm{D}} \bLambda\bW\right) 
& = \sum_{i,j} \left(\lambda_j w_i\widehat{d}_{ji}^2 -  \lambda_i w_i\widehat{d}_{ji}^2 \right) \notag\\
& = -\frac{1}{2}\sum_{i,j}(w_j-w_i)(\lambda_j-\lambda_i)\widehat{d}_{ji}^2 \le 0.
\end{align}
Set
\[
\delta_{\rm rot}:=
\begin{cases}
\displaystyle \left(\min_{i\in[K-1]}(w_i-w_{i+1})\right)
\left(\min_{i\in[K-1]}(\lambda_i-\lambda_{i+1})\right),& K\ge 2,\\[1ex]
+\infty,& K=1.
\end{cases}
\]
For $K=1$, we have $\widehat{\bm D}= \bm 0$. Plugging \eqref{eq:part1_inequality} and \eqref{eq:part2_inequality} into \eqref{eq:inner_product} yields
\begin{align*}
     \left\langle\bm D,\mathrm{Hess}\ G(\bm Q^\star)[\bm D]\right\rangle
    &\le - \sum_{i,j}(w_j-w_i)(\lambda_j-\lambda_i)\widehat{d}_{ji}^2
    - 2\lambda_K w_K\|\bm D_1\|_F^2 \\
    &\le -\min\left\{\delta_{\rm rot},2\lambda_K w_K \right\}\|\bm D\|_F^2,
\end{align*}
where we used the skew-symmetry of $\widehat{\bm D}$, the fact that its diagonal entries are zero, and
$\|\bm D\|_F^2=\|\widehat{\bm D}\|_F^2+\|\bm D_1\|_F^2$. This proves \eqref{eq:opt} with $\delta_1=\min\{\delta_{\rm rot},2\lambda_Kw_K\}$.

(ii) If \(\mathcal{Q}_G\setminus\mathcal{Q}_{\mathrm{opt}}=\emptyset\), the claim is vacuous and we set \(\delta_2=1\). Otherwise, define
\[
\rho(\bm Q):=
\max_{\bm D\in \mathrm{T}(\bm Q),\ \|\bm D\|_F=1}
\left\langle
\bm D,\mathrm{Hess}\,G(\bm Q)[\bm D]
\right\rangle .
\]
The maximum is attained by compactness of the unit sphere in \(\mathrm{T}(\bm Q)\). By \Cref{prop:saddle}, \(\rho(\bm Q)>0\) on \(\mathcal{Q}_G\setminus\mathcal{Q}_{\mathrm{opt}}\). This set is compact by \Cref{prop:crit,lem:equallemma,prop:opt}, and \(\rho\) is continuous because the Riemannian Hessian depends continuously on \(\bm Q\). Hence
$
\delta_2:=
\frac12
\min_{\bm Q\in\mathcal{Q}_G\setminus\mathcal{Q}_{\mathrm{opt}}}
\rho(\bm Q)>0.
$
For each \(\bm Q\in\mathcal{Q}_G\setminus\mathcal{Q}_{\mathrm{opt}}\), choosing a unit maximizer in the definition of \(\rho(\bm Q)\) gives \eqref{eq:strict-saddle}.
\end{proof}

\subsection{Benign Optimization Landscape of HePPCA}\label{subsec:empi} 

In this subsection, we study the empirical problem of HePPCA, namely Problem~\eqref{eq:QPOC1}. It is generally impossible to obtain a closed-form expression for the critical points of \( F(\bm Q) \). Fortunately, the critical points of its population counterpart \( G(\bm Q) \) can be fully characterized, as shown in \Cref{prop:crit}. For ease of exposition, we denote the critical point set of \(F\) by
\begin{align*}
    \mathcal{Q}_F := \{\bm Q \in \mathrm{St}(d,K):\mathrm{grad}\ F(\bm Q) =\bm 0\}.
\end{align*} 
For each $k \in [K]$, define
\begin{align}\label{eq:Delta}
    \bm \Delta_k := \bm A_k - \E\left[ \bm A_k\right].
\end{align}
We first recall a global error bound obtained from \cite[Corollary 1]{liu2019quadratic}.
\begin{lemma} 
\label{lem:plg}
There exists a constant $\kappa_1 > 0$ such that, for all $\bm Q \in \mathrm{St}(d,K)$,
\begin{align*}
    \mathrm{dist}(\bm Q, \mathcal{Q}_G) \le \kappa_1 \|\mathrm{grad}\ G(\bm Q)\|_F.
\end{align*}
\end{lemma}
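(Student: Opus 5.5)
The plan is to obtain the bound from the \L{}ojasiewicz-type error bound of \cite{liu2019quadratic}, specialized to the population objective $G$, and then upgrade it from a local to a global statement by compactness.

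\textbf{Step 1 (reduce to the local statement).} Problem~\eqref{eq:QPOC2} is an instance of Problem~\eqref{eq:QPOC}, with $\bm A=\bm Q^\star\bLambda\bm Q^{\star T}$ and $\bm B=\bW$. By \cite[Corollary 1]{liu2019quadratic}, which rests on the \L{}ojasiewicz exponent $1/2$ at every critical point, there are constants $\epsilon>0$ and $\kappa_0>0$ such that
\[
\mathrm{dist}(\bm Q,\mathcal{Q}_G)\le \kappa_0\|\mathrm{grad}\,G(\bm Q)\|_F
\quad\text{whenever } \mathrm{dist}(\bm Q,\mathcal{Q}_G)\le\epsilon .
\]
If the cited corollary is only stated near each individual critical point, we make the constants uniform as follows. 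The set $\mathcal{Q}_G$ is a finite union of orbits $\mathcal{Q}_{\bm h,\bm\Pi}$ (\Cref{prop:crit}), so it is compact. Cover it by finitely many neighborhoods on which the local bound holds, and use a Lebesgue-number argument to extract a single $\epsilon$ and a single $\kappa_0$.

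\textbf{Step 2 (the far region).} Consider the set $\mathcal{K}:=\{\bm Q\in\mathrm{St}(d,K):\mathrm{dist}(\bm Q,\mathcal{Q}_G)\ge\epsilon\}$. It is closed in the compact manifold, hence compact. The map $\bm Q\mapsto\|\mathrm{grad}\,G(\bm Q)\|_F$ is continuous and vanishes only on $\mathcal{Q}_G$, which is disjoint from $\mathcal{K}$. Therefore
\[
\eta:=\min_{\bm Q\in\mathcal{K}}\|\mathrm{grad}\,G(\bm Q)\|_F>0 .
\]
If $\mathcal{K}$ is empty, this step is vacuous.

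\textbf{Step 3 (combine).} Every $\bm Q\in\mathrm{St}(d,K)$ satisfies $\|\bm Q\|_F=\sqrt K$, so $\mathrm{dist}(\bm Q,\mathcal{Q}_G)\le 2\sqrt K$. Hence on $\mathcal{K}$ we have
\[
\mathrm{dist}(\bm Q,\mathcal{Q}_G)\le 2\sqrt K\le \frac{2\sqrt K}{\eta}\,\|\mathrm{grad}\,G(\bm Q)\|_F .
\]
Setting $\kappa_1:=\max\{\kappa_0,\,2\sqrt K/\eta\}$ gives the claimed inequality on all of $\mathrm{St}(d,K)$.

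The main obstacle is Step 1, specifically confirming that the cited result gives the distance to the \emph{full} critical set, uniformly over its connected components, rather than only near a single critical point. If this turns out to be only local, the finite-cover compactness argument above closes the gap. An alternative is to verify the bound directly by combining the closed-form orbit structure of \Cref{prop:crit} with the quadratic growth of $\|\mathrm{grad}\,G\|_F$ in the normal directions at each orbit, which is where the exponent $1/2$ enters.
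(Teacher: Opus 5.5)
Your proposal is correct and follows essentially the paper's route. The paper gives no separate argument: it cites \cite[Corollary 1]{liu2019quadratic} for $G$, viewed as an instance of Problem~\eqref{eq:QPOC} with $\bm B=\bW$ nonsingular, and your Steps 1--3 just spell out the standard compactness argument that turns a local error bound into this global one. One small aside on your closing remark: the exponent $1/2$ corresponds to linear growth of $\|\mathrm{grad}\,G\|_F$ in the normal directions, not quadratic growth, but this does not affect your proof.
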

If $\mathcal{Q}_G\setminus\mathcal{Q}_{\mathrm{opt}}=\emptyset$, we set $\mathrm{dist}(\bm Q,\emptyset)=+\infty$ so the corresponding neighborhood assertions are vacuous.
\Cref{lem:plg} controls the distance to \(\mathcal Q_G\) by the norm of Riemannian gradient of \(G\). We next apply this bound to critical points of \(F\), using the following perturbation identity. By \eqref{eq:QPOC1}, \eqref{eq:EA}, and \eqref{eq:QPOC2}, for any \(\bm Q\in\mathrm{St}(d,K)\),
\begin{align}\label{eq:F-G}
    F(\bm Q)
    =
    G(\bm Q)
    +
    \sum_{k=1}^K
    \mathrm{Tr}\!\left(\bm Q^T\bm\Delta_k\bm Q\bm E_{kk}\right)
    +
    \sum_{k=1}^K\sum_{l=1}^L \frac{w_{l,k}n_l}{n},
\end{align}
where \(\bm E_{kk}:=\bm e_k\bm e_k^T \in \mathbb{R}^{K \times K}\).

\begin{lemma}
\label{lem:dist_twosets}
For each \( \bm Q \in \mathcal{Q}_F \), we have
\begin{align}
\label{eq:distpl}
\mathrm{dist}(\bm Q, \mathcal{Q}_G) \le {2\kappa_1\sum_{k=1}^K \|\bm\Delta_k\|},
\end{align}
where \( \kappa_1 \) is defined in \Cref{lem:plg}.
\end{lemma}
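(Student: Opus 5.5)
The plan is to combine \Cref{lem:plg} with the perturbation identity \eqref{eq:F-G}. Fix $\bm Q\in\mathcal{Q}_F$, so $\mathrm{grad}\, F(\bm Q)=\bm 0$. Applying \Cref{lem:plg} at $\bm Q$ gives $\mathrm{dist}(\bm Q,\mathcal{Q}_G)\le \kappa_1\|\mathrm{grad}\, G(\bm Q)\|_F$. It therefore suffices to show that
\[
\|\mathrm{grad}\, G(\bm Q)\|_F \le 2\sum_{k=1}^K\|\bm\Delta_k\|.
\]

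To obtain this bound, I will differentiate \eqref{eq:F-G}. The last sum is constant, so write $P(\bm Q):=\sum_{k}\mathrm{Tr}(\bm Q^T\bm\Delta_k\bm Q\bm E_{kk})=\sum_k \bm q_k^T\bm\Delta_k\bm q_k$. Then $F=G+P+\text{const}$ as functions on $\R^{d\times K}$, and the Riemannian gradient is linear. Hence $\mathrm{grad}\, F(\bm Q)=\mathrm{grad}\, G(\bm Q)+\mathrm{grad}\, P(\bm Q)$. At a critical point of $F$ this gives $\mathrm{grad}\, G(\bm Q)=-\mathrm{grad}\, P(\bm Q)$. Since $\bm\Delta_k$ is symmetric, the Euclidean gradient is $\nabla P(\bm Q)=2\sum_k \bm\Delta_k\bm Q\bm E_{kk}$, and its $k$-th column is $2\bm\Delta_k\bm q_k$.

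The remaining step is to bound the norm of the projection. The projection $\mathrm{Proj}_{\mathrm{T}(\bm Q)}$ is an orthogonal projection in the Frobenius inner product, so it is nonexpansive. Therefore $\|\mathrm{grad}\, P(\bm Q)\|_F\le\|\nabla P(\bm Q)\|_F$. Each column satisfies $\|2\bm\Delta_k\bm q_k\|\le 2\|\bm\Delta_k\|$ because $\|\bm q_k\|=1$. Using $\|\nabla P(\bm Q)\|_F\le \sum_k \|2\bm\Delta_k\bm q_k\|$, which follows from the triangle inequality applied to the column-wise decomposition $\sum_k 2\bm\Delta_k\bm q_k\bm e_k^T$, we get $\|\mathrm{grad}\, G(\bm Q)\|_F\le 2\sum_k\|\bm\Delta_k\|$. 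Combining this with \Cref{lem:plg} yields \eqref{eq:distpl}.

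I expect no serious obstacle. The only point needing care is confirming that the projection formula $\bm C-\bm Q\,\mathrm{sym}(\bm Q^T\bm C)$ is indeed the Frobenius-orthogonal projection onto $\mathrm{T}(\bm Q)$, so that nonexpansiveness applies. This holds because its complement $\bm Q\,\mathrm{sym}(\bm Q^T\bm C)$ lies in the normal space $\{\bm Q\bm S:\bm S\in\S^K\}$.
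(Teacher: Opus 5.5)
Your proof is correct and follows the paper's argument: criticality of $F$ together with \eqref{eq:F-G} gives $\mathrm{grad}\,G(\bm Q) = -\sum_k \mathrm{Proj}_{\mathrm{T}(\bm Q)}(2\bm\Delta_k\bm Q\bm E_{kk})$. Nonexpansiveness of the projection and $\|\bm\Delta_k\bm q_k\|\le\|\bm\Delta_k\|$ then bound this by $2\sum_k\|\bm\Delta_k\|$, and \Cref{lem:plg} finishes.

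One small slip: \eqref{eq:F-G} holds only on $\mathrm{St}(d,K)$, not on all of $\R^{d\times K}$, since off the manifold the remainder is $\sum_{k}\sum_{l}\frac{w_{l,k}n_l}{n}\|\bm q_k\|^2$ rather than a constant. This is harmless, because the Riemannian gradient depends only on the restriction of the function to the manifold; equivalently, the projected gradient of that remainder vanishes on $\mathrm{St}(d,K)$.
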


\begin{proof}
For each $\bm Q \in \mathcal{Q}_F$, \eqref{eq:F-G} gives
\begin{align*}
    \bm 0 = \mathrm{grad}\ F(\bm Q)
    = \mathrm{grad}\ G(\bm Q)
    + \sum_{k=1}^K \mathrm{grad}\,\mathrm{Tr}\left(\bm Q^T\bm \Delta_k \bm Q \bm E_{kk}\right).
\end{align*}
Since \(\bm\Delta_k\) and \(\bm E_{kk}\) are symmetric, we have
\begin{align*}
    \|\mathrm{grad}\ G(\bm Q)\|_F
    &= 2\left\|\sum_{k=1}^K \mathrm{Proj}_{\mathrm{T}(\bm Q)}
    \left( \bm \Delta_k \bm Q \bm E_{kk}\right)\right\|_F \\
    &\le 2\sum_{k=1}^K \left\|\bm \Delta_k \bm Q \bm E_{kk}\right\|_F
    = 2\sum_{k=1}^K \left\|\bm \Delta_k \bm q_k\right\|
    \le 2\sum_{k=1}^K \left\|\bm \Delta_k\right\|,
\end{align*}
where \(\bm q_k\) is the \(k\)-th column of \(\bm Q\). Together with \Cref{lem:plg}, this gives \eqref{eq:distpl}.
\end{proof}

We next perturb the landscape of Problem \eqref{eq:QPOC2} to analyze Problem \eqref{eq:QPOC1} when $\bm A_k$ is close to $\E[\bm A_k]$.
\begin{proposition}
\label{prop:F_classify}
Suppose that \( \{\bm \Delta_k\}_{k=1}^K \) satisfy
\begin{align}\label{eq:delta_condition}
\sum_{k=1}^K \| \bm \Delta_k \| \le \frac{\min\{\delta_1,\delta_2\}}{12},
\end{align}
where $\delta_1$ and $\delta_2$ are defined in \Cref{prop:opt_saddle_G}. Then the following statements hold: \\
(i) There exists \( \gamma_1 > 0\) such that, for every $\bm Q \in \mathrm{St}(d,K)$ satisfying
\[
\dist(\bm Q, \mathcal Q_{\rm opt})\le \gamma_1,
\]
we have
\begin{align}\label{eq1:prop F class}
    \langle\bm D, \mathrm{Hess}\ F(\bm Q)[\bm D]\rangle\le -\frac{\delta_1}{3} \|\bm D\|_F^2,\quad \forall \bm D \in \mathrm{T}(\bm Q),
\end{align}
where $\mathcal Q_{\rm opt}$ is defined in \Cref{prop:opt}.\\
(ii) There exists \( \gamma_2 > 0\) such that, for every $\bm Q \in \mathrm{St}(d,K)$ satisfying $\mathrm{dist}(\bm Q, \mathcal{Q}_G\setminus\mathcal{Q}_{\mathrm{opt}}) \le \gamma_2$, there exists a nonzero $\bm D \in \mathrm{T}(\bm Q)$ such that
\begin{align}\label{eq2:prop F class}
    \langle \bm D, \mathrm{Hess}\ F(\bm Q)[\bm D]\rangle
    \ge \frac{\delta_2}{3} \|\bm D\|_F^2.
\end{align}
\end{proposition}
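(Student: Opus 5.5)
The plan is a two-step perturbation argument. First, compare the Riemannian Hessians of $F$ and $G$ at the same point. Second, use continuity of $\mathrm{Hess}\,G$ in $\bm Q$ to move from the critical sets of $G$ to nearby points.

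\textbf{Step 1 (Hessian comparison).} By \eqref{eq:F-G}, $F-G$ differs by a constant from $P(\bm Q):=\sum_{k}\mathrm{Tr}(\bm Q^T\bm\Delta_k\bm Q\bm E_{kk})$, so $\mathrm{Hess}\,F(\bm Q)=\mathrm{Hess}\,G(\bm Q)+\mathrm{Hess}\,P(\bm Q)$. We have $\nabla P(\bm Q)=2\sum_k\bm\Delta_k\bm Q\bm E_{kk}$ and $\nabla^2P(\bm Q)[\bm D]=2\sum_k\bm\Delta_k\bm D\bm E_{kk}$. Substituting into \eqref{eq:Re He}, and using that the projection does not change the inner product with $\bm D\in\mathrm{T}(\bm Q)$, gives
\[
\bigl|\langle\bm D,\mathrm{Hess}\,P(\bm Q)[\bm D]\rangle\bigr|\le 2\sum_k\|\bm\Delta_k\|\|\bm D\|_F^2+2\|\bm D\|_F^2\Bigl\|\mathrm{sym}\Bigl(\sum_k\bm Q^T\bm\Delta_k\bm Q\bm E_{kk}\Bigr)\Bigr\|\le 4\sum_k\|\bm\Delta_k\|\,\|\bm D\|_F^2 .
\]
This bound holds uniformly over $\bm Q\in\mathrm{St}(d,K)$. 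Under \eqref{eq:delta_condition} it is at most $\frac{\min\{\delta_1,\delta_2\}}{3}\|\bm D\|_F^2$.

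\textbf{Step 2 (continuity of $\mathrm{Hess}\,G$).} Consider the quadratic form $\bm D\mapsto\langle\bm D,\mathrm{Hess}\,G(\bm Q)[\bm D]\rangle$. Extend it to $\bm D\in\R^{d\times K}$ through the formula $\langle\bm D,\nabla^2G(\bm Q)[\bm D]-\bm D\,\mathrm{sym}(\bm Q^T\nabla G(\bm Q))\rangle$, which agrees with it on $\mathrm{T}(\bm Q)$. This extension is Lipschitz in $\bm Q$: there is $L_G>0$ (depending on $\|\bm\Lambda\|$ and $\|\bm W\|$) such that the form changes by at most $L_G\|\bm Q-\bm Q_0\|_F\|\bm D\|_F^2$ when $\bm Q_0$ is replaced by $\bm Q$.

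The main obstacle is that $\bm D\in\mathrm{T}(\bm Q)$ need not lie in $\mathrm{T}(\bm Q_0)$. The remedy is to compare against $\bm D_0:=\mathrm{Proj}_{\mathrm{T}(\bm Q_0)}(\bm D)$, where $\bm Q_0$ is a nearest point of the relevant compact set ($\mathcal Q_{\rm opt}$ or $\mathcal Q_G\setminus\mathcal Q_{\rm opt}$). Then
\[
\|\bm D-\bm D_0\|_F=\|\bm Q_0\mathrm{sym}(\bm Q_0^T\bm D)\|_F=\|\mathrm{sym}((\bm Q_0-\bm Q)^T\bm D)\|_F\le\|\bm Q-\bm Q_0\|_F\|\bm D\|_F .
\]
Since the quadratic form is bounded, replacing $\bm D_0$ by $\bm D$ costs $O(\|\bm Q-\bm Q_0\|_F)\|\bm D\|_F^2$. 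For part (ii), the map is used in the other direction: start from the good direction $\bm D_0\in\mathrm{T}(\bm Q_0)$ given by \Cref{prop:opt_saddle_G}(ii) and take $\bm D:=\mathrm{Proj}_{\mathrm{T}(\bm Q)}(\bm D_0)$. This $\bm D$ is nonzero once $\|\bm Q-\bm Q_0\|_F<1$, since $\|\bm D\|_F\ge(1-\|\bm Q-\bm Q_0\|_F)\|\bm D_0\|_F$.

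\textbf{Step 3 (combining).} Putting the two steps together:
\begin{itemize}
\item For (i), every $\bm D\in\mathrm{T}(\bm Q)$ satisfies $\langle\bm D,\mathrm{Hess}\,G(\bm Q)[\bm D]\rangle\le-\delta_1\|\bm D\|_F^2+C\gamma_1\|\bm D\|_F^2$, with $C$ an absolute multiple of $L_G$ plus the bound on $\|\mathrm{Hess}\,G\|$.
\item For (ii), one similarly gets $\langle\bm D,\mathrm{Hess}\,G(\bm Q)[\bm D]\rangle\ge\delta_2\|\bm D\|_F^2-C\gamma_2\|\bm D\|_F^2$. Here the ratio $\|\bm D_0\|_F^2/\|\bm D\|_F^2\ge1$ only helps the leading term.
\end{itemize}
Choose $\gamma_1$ with $C\gamma_1\le\delta_1/3$ and $\gamma_2$ with $C\gamma_2\le\delta_2/3$ and $\gamma_2\le 1/2$. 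Adding the Step 1 bound of $\frac{\delta_i}{3}\|\bm D\|_F^2$ then yields $-\delta_1/3$ in (i) and $+\delta_2/3$ in (ii), which are \eqref{eq1:prop F class} and \eqref{eq2:prop F class}. The only delicate part is tracking the constants through the tangent-space projections in Step 2; everything else is bookkeeping.
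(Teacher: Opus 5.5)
Your proof is correct and follows the same plan as the paper. It uses the same uniform perturbation bound $\bigl|\mathrm{Hess}(F-G)(\bm Q)[\bm D,\bm D]\bigr|\le 4\|\bm D\|_F^2\sum_k\|\bm\Delta_k\|$, the same intermediate targets $\mp\tfrac{2}{3}\delta_i$, and in (ii) the same transported direction $\mathrm{Proj}_{\mathrm T(\bm Q)}(\bm D_0)$.

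The difference is in the continuity step. The paper argues qualitatively. In (i) it invokes compactness of $\mathcal Q_{\rm opt}$ and smoothness of $G$. In (ii) it uses continuity of $\mathrm{Hess}\,G$ and of the tangent projection to get a radius $r_{\bm Q_0}$ around each saddle, then extracts $\gamma_2$ from a finite subcover of the compact set $\mathcal Q_G\setminus\mathcal Q_{\rm opt}$. You instead use a uniform Lipschitz constant for the extended quadratic form (one can take $L_G=4\lambda_1 w_1$, since $\nabla^2 G$ does not depend on $\bm Q$), together with $\|\bm D-\bm D_0\|_F\le\|\bm Q-\bm Q_0\|_F\|\bm D\|_F$.

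What your route buys:
\begin{itemize}
\item explicit radii $\gamma_1,\gamma_2$ depending only on $\delta_i$, $\lambda_1$, $w_1$;
\item no covering argument, since any sufficiently close $\bm Q_0$ works once $\delta_2$ is uniform;
\item an explicit comparison of $\mathrm T(\bm Q)$ with $\mathrm T(\bm Q_0)$ in (i), which the paper's one-line justification leaves implicit.
\end{itemize}
The paper's route is shorter and avoids computing constants.

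One bookkeeping point in (i). You flagged the ratio $\|\bm D_0\|_F/\|\bm D\|_F$ only for (ii). In (i) the inequality $\|\bm D_0\|_F\le\|\bm D\|_F$ goes the wrong way for an upper bound: $-\delta_1\|\bm D_0\|_F^2\ge-\delta_1\|\bm D\|_F^2$. You need Pythagoras, $\|\bm D\|_F^2-\|\bm D_0\|_F^2=\|\bm D-\bm D_0\|_F^2\le\gamma_1^2\|\bm D\|_F^2$. This gives an extra error term $\delta_1\gamma_1^2\|\bm D\|_F^2$. It is harmless: it is absorbed into $C\gamma_1$ for $\gamma_1\le 1$, since $\delta_1$ is at most the uniform bound on $\|\mathrm{Hess}\,G\|$.
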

\begin{proof}
From \eqref{eq:F-G}, we obtain
\begin{align}\label{eq1:prop F}
    \mathrm{Hess}\ F(\bm Q)
    =
    \mathrm{Hess}\ G(\bm Q)
    +\sum_{k=1}^K \mathrm{Hess}\,
    \mathrm{Tr}(\bm Q^T\bm \Delta_k\bm Q \bm E_{kk}).
\end{align}
For each $\bm D \in \mathrm{T}(\bm Q)$, we have
\begin{align*}
\nabla \mathrm{Tr}(\bm Q^T\bm \Delta_k\bm Q\bm E_{kk})
&= 2\bm \Delta_k\bm Q\bm E_{kk},\\
\nabla^2 \mathrm{Tr}(\bm Q^T\bm \Delta_k\bm Q\bm E_{kk})[\bm D]
&= \bm \Delta_k\bm D\bm E_{kk} + \bm \Delta_k^T\bm D\bm E_{kk}^T
 = 2 \bm \Delta_k\bm D \bm E_{kk},
\end{align*}
where $\bm\Delta_k$ and $\bm E_{kk}$ are symmetric. By \eqref{eq:Re He}, nonexpansiveness of the tangent projection, $\|\bm\Delta_k\bm D\bm E_{kk}\|_F\le\|\bm\Delta_k\|\,\|\bm D\|_F$ and $\|\mathrm{sym}(\bm Q^T\bm\Delta_k\bm Q\bm E_{kk})\|_F\le \|\bm\Delta_k\|$, we obtain
\[
   \left|\left\langle\bm D, \mathrm{Hess}\,
   \mathrm{Tr}(\bm Q^T\bm \Delta_k\bm Q \bm E_{kk})[\bm D]\right\rangle \right|
   \le 4\|\bm \Delta_k\|\|\bm D\|_F^2.
\]
This, together with \eqref{eq1:prop F}, implies
\begin{equation}
\label{eq:hessian_def}
\begin{aligned}
&\left|\mathrm{Hess}(F\!-\!G)(\bm Q)\![\bm D,\bm D]\right|\le 4\|\bm D\|_F^2\!\sum_{k=1}^K\|\bm \Delta_k\|.
\end{aligned}
\end{equation}

(i) It follows from \Cref{prop:opt_saddle_G}(i), compactness of \( \mathcal{Q}_{\mathrm{opt}} \), and smoothness of \(G\) that there exists \( \gamma_1 > 0 \) such that for any \( \widetilde{\bm Q} \) in a neighborhood of \( \mathcal{Q}_{\mathrm{opt}} \), i.e.,
\[
\widetilde{\bm Q} \in \left\{ \bm Q \in \mathrm{St}(d, K) : \mathrm{dist}(\bm Q, \mathcal{Q}_{\mathrm{opt}}) \le \gamma_1 \right\},
\]
the following inequality holds:
\begin{align}\label{eq2:prop F}
  \langle \widetilde{\bm D}, \mathrm{Hess}\ G(\widetilde{\bm Q})[\widetilde{\bm D}] \rangle
  \le -\frac{2}{3} \delta_1 \| \widetilde{\bm D} \|_F^2,
  \quad \forall \widetilde{\bm D} \in \mathrm{T}(\widetilde{\bm Q}).  
\end{align}
For each $\bm Q\in \mathrm{St}(d,K)$ such that $\mathrm{dist}(\bm Q, \mathcal{Q}_{\mathrm{opt}}) \le \gamma_1$, we have for all $\bm D \in \mathrm{T}(\bm Q)$,
\begin{align*}
    \langle \bm D, \mathrm{Hess}\ F(\bm Q)[\bm D]\rangle
    &\le 4\sum_{k=1}^K \|\bm \Delta_k\|\cdot \|\bm D\|_F^2
    + \langle  \bm D, \mathrm{Hess}\ G(\bm Q)[\bm D]\rangle \le  -\frac{1}{3}\delta_1 \|\bm D\|_F^2,
\end{align*}
where the first inequality follows from \eqref{eq:hessian_def}, and the last inequality uses \eqref{eq:delta_condition} and \eqref{eq2:prop F}.

(ii) If \(\mathcal{Q}_G\setminus\mathcal{Q}_{\mathrm{opt}}=\emptyset\), fix any \(\gamma_2>0\). Otherwise, set \(\mathcal S:=\mathcal{Q}_G\setminus\mathcal{Q}_{\mathrm{opt}}\). By \Cref{prop:opt_saddle_G}(ii), for each \(\bm Q_0\in\mathcal S\), there is a unit direction \(\bm D_0\in \mathrm{T}(\bm Q_0)\) with \(\langle \bm D_0,\mathrm{Hess}\,G(\bm Q_0)[\bm D_0]\rangle\ge \delta_2\). For \(\bm Q\in\mathrm{St}(d,K)\), define \(\bm D_{\bm Q_0}(\bm Q):=\mathrm{Proj}_{\mathrm{T}(\bm Q)}(\bm D_0)\). Since \(\bm D_{\bm Q_0}(\bm Q_0)=\bm D_0\neq \bm 0\), from continuity of the tangent projection and the Riemannian Hessian, it follows that there exists \(r_{\bm Q_0}>0\) such that \(\bm D_{\bm Q_0}(\bm Q)\neq \bm 0\) and\[
\frac{\left\langle
\bm D_{\bm Q_0}(\bm Q),
\mathrm{Hess}\,G(\bm Q)[\bm D_{\bm Q_0}(\bm Q)]
\right\rangle}
{\|\bm D_{\bm Q_0}(\bm Q)\|_F^2}
\ge
\frac{2\delta_2}{3},\quad \forall \bm Q\in U_{\bm Q_0},
\]
where \(U_{\bm Q_0}:=\{\bm Q\in\mathrm{St}(d,K):\|\bm Q-\bm Q_0\|_F<r_{\bm Q_0}\}\). The sets $\{U_{\bm Q_0}: \bm Q_0 \in \mathcal S\}$ form an open cover of compact \(\mathcal S\), so there exist \(\bm Q_1,\ldots,\bm Q_m\in\mathcal S\) such that \(\mathcal S \subseteq \bigcup_{i=1}^m U_{\bm Q_i}\). Set \(U:=\bigcup_{i=1}^m U_{\bm Q_i}\). Since \(U\) is an open neighborhood of compact \(\mathcal S\) in \(\mathrm{St}(d,K)\), choose \(\gamma_2>0\) such that
\[
\left\{\bm Q\in\mathrm{St}(d,K):\mathrm{dist}(\bm Q,\mathcal S)\le \gamma_2\right\} \subseteq U.
\]
Thus, if \(\mathrm{dist}(\bm Q,\mathcal S)\le\gamma_2\), then \(\bm Q\in U_{\bm Q_i}\) for some \(i\). Taking \(\bm D:=\bm D_{\bm Q_i}(\bm Q)=\mathrm{Proj}_{\mathrm{T}(\bm Q)}(\bm D_0^i)\), where \(\bm D_0^i\in\mathrm{T}(\bm Q_i)\) is the corresponding unit direction, gives \(\bm D\neq\bm0\) and \(\langle \bm D,\mathrm{Hess}\,G(\bm Q)[\bm D]\rangle\ge(2\delta_2/3)\|\bm D\|_F^2\). Combining this with \eqref{eq:delta_condition} and \eqref{eq:hessian_def} gives \eqref{eq2:prop F class}.
\end{proof}
 
To proceed, we introduce the following definitions. Let \(\mathcal B_{\rm geo}(\bm Q_0,\eta)\) denote a geodesic ball in \(\mathrm{St}(d,K)\)  centered at $\bm Q_0$ with radius $\eta$ with respect to the Riemannian metric. A subset \(S\subseteq\mathrm{St}(d,K)\) is called geodesically
convex if any two points in \(S\) can be joined by a geodesic segment lying
entirely in \(S\) (see \cite[Chapter~11]{boumal2023optimization}). 

\begin{lemma}\label{lem:local_unique_critical}
Let \(H:\mathrm{St}(d,K)\to\mathbb R\) be a smooth function. Let \(\eta>0\)
be such that the geodesic ball
\(\mathcal B_{\rm geo}(\bm Q_0,\eta)\) is  geodesically convex. Suppose that there exists \(\mu>0\) such that 
\[
    \left\langle \bm D,\mathrm{Hess}\,H(\bm Z)[\bm D]\right\rangle
    \le -\mu\|\bm D\|_F^2,
    \quad
    \forall \bm Z\in\mathcal B_{\rm geo}(\bm Q_0,\eta),\
    \bm D\in\mathrm{T}(\bm Z).
\]
Then \(\mathcal B_{\rm geo}(\bm Q_0,\eta)\) contains at most one critical point
of \(H\).
\end{lemma}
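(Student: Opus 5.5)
We argue by contradiction. Suppose $\bm Z_1\neq\bm Z_2$ are two critical points of $H$ in $\mathcal B_{\rm geo}(\bm Q_0,\eta)$. Since the ball is geodesically convex, there is a geodesic $\gamma:[0,1]\to\mathcal B_{\rm geo}(\bm Q_0,\eta)$ with $\gamma(0)=\bm Z_1$ and $\gamma(1)=\bm Z_2$. Because $\bm Z_1\neq\bm Z_2$, the constant-speed geodesic has $\gamma'(t)\neq\bm 0$ for all $t$, and $\|\gamma'(t)\|_F=\ell>0$ is constant.

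Define the scalar function $\phi(t):=H(\gamma(t))$. For a geodesic, $\gamma''(t)$ has zero Riemannian (tangential) acceleration, and the standard second-order formula \cite[Chapter~5]{boumal2023optimization} gives
\[
\phi'(t)=\langle \mathrm{grad}\,H(\gamma(t)),\gamma'(t)\rangle,\qquad
\phi''(t)=\left\langle \gamma'(t),\mathrm{Hess}\,H(\gamma(t))[\gamma'(t)]\right\rangle .
\]
This is the same identity already used in the proof of \Cref{prop:saddle}. Since $\gamma(t)$ stays in the ball, the assumed Hessian bound gives $\phi''(t)\le-\mu\ell^2<0$ on $[0,1]$.

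Criticality of the endpoints gives $\phi'(0)=\langle\mathrm{grad}\,H(\bm Z_1),\gamma'(0)\rangle=0$ and $\phi'(1)=0$. On the other hand,
\[
\phi'(1)-\phi'(0)=\int_0^1\phi''(t)\,dt\le-\mu\ell^2<0,
\]
which is a contradiction. Hence the ball contains at most one critical point.

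The only delicate step is justifying $\phi''(t)=\mathrm{Hess}\,H(\gamma(t))[\gamma'(t),\gamma'(t)]$. This holds because $\gamma$ is a geodesic of the Levi-Civita connection induced by the embedded metric, so the term $\langle\mathrm{grad}\,H,\tfrac{D}{dt}\gamma'\rangle$ vanishes. It also needs $\gamma$ to be a smooth geodesic, which geodesic convexity guarantees. The remaining steps are routine.
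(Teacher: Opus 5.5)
Your proposal is correct and follows essentially the same route as the paper. The paper also joins the two critical points by a geodesic in the convex ball, uses the Hessian bound to get $\varphi''<0$ for $\varphi=H\circ\gamma$, and contradicts $\varphi'(0)=\varphi'(1)=0$; you phrase the last step through the integral of $\varphi''$ and spell out why the acceleration term vanishes, but the argument is the same.
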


\begin{proof}
{Suppose that two distinct critical points $\bm Q_a,\bm Q_b$ of $H$ lie
in $\mathcal B_{\rm geo}(\bm Q_0,\eta)$. By geodesic convexity, they
are connected by a nonconstant geodesic $\gamma$ contained in this ball. For
$\varphi:=H\circ\gamma$, the Hessian bound gives
\[
    \varphi''(t)
    =
    \left\langle \dot\gamma(t),
    \mathrm{Hess}\,H(\gamma(t))[\dot\gamma(t)]\right\rangle
    \le -\mu\|\dot\gamma(t)\|_F^2 <0 .
\]
Hence $\varphi'$ is strictly decreasing. On the other hand, since
$\bm Q_a$ and $\bm Q_b$ are critical points of $H$, we have
$\varphi'(0)=\varphi'(1)=0$, a contradiction.}
\end{proof}


We next fix several constants used later. Since $\mathcal Q_{\mathrm{opt}}$ is finite, by the local geodesic convexity of sufficiently small geodesic balls, \Cref{prop:opt_saddle_G}(i), and the continuity of the Riemannian Hessian of $G$, we can choose $\eta>0$ such that, for every $\bm Q_1\in\mathcal Q_{\mathrm{opt}}$, the geodesic ball 
$\mathcal B_{\rm geo}(\bm Q_1,\eta)$ is geodesically convex, is contained in the $\gamma_1$-neighborhood of $\mathcal Q_{\mathrm{opt}}$, and satisfies
\[
    \mathrm{Hess}\,G(\bm Z)[\bm D,\bm D]
    \le
    -\frac{\delta_1}{2}\|\bm D\|_F^2,
    \quad
    \forall \bm Z\in\mathcal B_{\rm geo}(\bm Q_1,\eta),\
    \bm D\in\mathrm T(\bm Z).
\]
Moreover, since each $\mathcal B_{\rm geo}(\bm Q_1,\eta)$ is an open neighborhood of $\bm Q_1$ in $\mathrm{St}(d,K)$ and $\mathcal Q_{\mathrm{opt}}$ is finite, there exists $\tau>0$ such that
\[
    \mathbb B(\bm Q_1,\tau)\cap\mathrm{St}(d,K)
    \subseteq
    \mathcal B_{\rm geo}(\bm Q_1,\eta),
    \quad
    \forall \bm Q_1\in\mathcal Q_{\mathrm{opt}},
\]
where $\mathbb B(\bm Q_1,\tau) := \left\{ \bm Q: \|\bm Q - \bm Q_1\|_F \le \tau \right\}$.

Set
\[
\sigma_0:=
\begin{cases}
\operatorname{dist}\!\left(\mathcal Q_{\mathrm{opt}},
\mathcal Q_G\setminus\mathcal Q_{\mathrm{opt}}\right),
&\mathcal Q_G\setminus\mathcal Q_{\mathrm{opt}}\neq\emptyset,\\
1,&\mathcal Q_G\setminus\mathcal Q_{\mathrm{opt}}=\emptyset,
\end{cases}
\qquad
c_0:=1/2.
\]
Then, we prove that \(\sigma_0>0\). If
\(\mathcal Q_G\setminus\mathcal Q_{\mathrm{opt}}=\emptyset\), this follows
directly from the definition. Otherwise, fix any
\(\bm Q_1\in\mathcal Q_{\mathrm{opt}}\). By the choice of \(\eta\), the ball
\(\mathcal B_{\rm geo}(\bm Q_1,\eta)\) is geodesically convex and
\(\mathrm{Hess}\,G\) is negative definite on this ball. Applying \Cref{lem:local_unique_critical} with \(H=G\), we conclude that each \(\bm Q_1\in\mathcal Q_{\mathrm{opt}}\) is isolated in \(\mathcal Q_G\). Since \(\mathcal Q_{\mathrm{opt}}\) is finite and \(\mathcal Q_G\) is compact, it follows that $\sigma_0>0$. We now state the perturbation result for Problem \eqref{eq:QPOC1}. 
\begin{theorem}\label{thm:Delta}
Suppose that \( \{\bm \Delta_k\}_{k=1}^K \) satisfies
\begin{align}\label{eq:final_condition}
\sum_{k=1}^K \| \bm \Delta_k \| \le \delta_3 := \min \left\{\frac{\min\{\gamma_1,\gamma_2,\tau,\sigma_0/3,c_0\}}{2\kappa_1}, \frac{\min\{\delta_1,\delta_2\}}{12}\right\},
\end{align}
where $\gamma_1,\gamma_2$ are defined in \Cref{prop:F_classify},
$\delta_1,\delta_2$ are defined in \Cref{prop:opt_saddle_G}, $\tau,\sigma_0,c_0$ are fixed above, and $\kappa_1$ is defined in
\Cref{lem:plg}. Then every \( \bm{Q} \in \mathcal{Q}_F \) is either a global
maximizer or a strict saddle point. In particular, if \( \bm Q \) is a global
maximizer, then \(F\) is  locally geodesically  strongly concave near
$\bm Q$.
\end{theorem}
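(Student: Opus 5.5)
Let $\bm Q\in\mathcal Q_F$ be an arbitrary critical point of $F$. By \Cref{lem:dist_twosets} and \eqref{eq:final_condition},
\[
\mathrm{dist}(\bm Q,\mathcal Q_G)\le 2\kappa_1\sum_{k=1}^K\|\bm\Delta_k\|\le \min\{\gamma_1,\gamma_2,\tau,\sigma_0/3,c_0\}.
\]
Since $\mathcal Q_G$ is compact, choose $\bm Q_0\in\mathcal Q_G$ attaining this distance. The plan is to split into two cases according to whether $\bm Q_0\in\mathcal Q_{\rm opt}$ or $\bm Q_0\in\mathcal Q_G\setminus\mathcal Q_{\rm opt}$. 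Note also that \eqref{eq:final_condition} implies \eqref{eq:delta_condition}, so \Cref{prop:F_classify} applies.

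\textbf{Saddle case.} If $\bm Q_0\in\mathcal Q_G\setminus\mathcal Q_{\rm opt}$, then $\mathrm{dist}(\bm Q,\mathcal Q_G\setminus\mathcal Q_{\rm opt})\le\gamma_2$. \Cref{prop:F_classify}(ii) then supplies a nonzero $\bm D\in\mathrm T(\bm Q)$ with $\mathrm{Hess}\,F(\bm Q)[\bm D,\bm D]\ge\frac{\delta_2}{3}\|\bm D\|_F^2>0$, so $\bm Q$ is a strict saddle point. The choice $\sigma_0/3$ makes this dichotomy clean: $\bm Q$ cannot be within $\sigma_0/3$ of both parts, since those parts are $\sigma_0$ apart.

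\textbf{Optimal case.} If $\bm Q_0=\bm Q_1\in\mathcal Q_{\rm opt}$, then $\|\bm Q-\bm Q_1\|_F\le\tau$, so $\bm Q\in\mathcal B_{\rm geo}(\bm Q_1,\eta)$. This ball lies in the $\gamma_1$-neighborhood of $\mathcal Q_{\rm opt}$, so \Cref{prop:F_classify}(i) gives $\mathrm{Hess}\,F\preceq-\frac{\delta_1}{3}$ on the whole ball. This yields local geodesic strong concavity of $F$ on the geodesically convex ball, and by \Cref{lem:local_unique_critical} the ball contains at most one critical point of $F$. It remains to show that $\bm Q$ is a global maximizer of $F$.

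First, $F$ attains its maximum on the compact manifold at some $\bm Q^\dagger\in\mathcal Q_F$. Applying the dichotomy above to $\bm Q^\dagger$: it cannot be a strict saddle (a direction of positive curvature contradicts maximality), so it lies in some ball $\mathcal B_{\rm geo}(\bm Q_1',\eta)$ with $\bm Q_1'\in\mathcal Q_{\rm opt}$. The remaining step is to transfer optimality from $\bm Q^\dagger$ to $\bm Q$, and this is the main obstacle.

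The plan is to use the sign symmetry. Write $\bm Q_1=\bm Q^\star\mathrm{diag}(\bm s)$ and $\bm Q_1'=\bm Q^\star\mathrm{diag}(\bm s')$. Since $F(\bm Q)=\sum_k\bm q_k^T\bm A_k\bm q_k$ is invariant under column sign flips $\bm Q\mapsto\bm Q\,\mathrm{diag}(\bm s)$ for any $\bm s\in\{1,-1\}^K$, and these flips are isometries permuting the balls around $\mathcal Q_{\rm opt}$, the point $\bm Q^\dagger\mathrm{diag}(\bm s'\circ\bm s)$ is a global maximizer of $F$. It is also a critical point lying in $\mathcal B_{\rm geo}(\bm Q_1,\eta)$. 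Uniqueness from \Cref{lem:local_unique_critical} then forces $\bm Q=\bm Q^\dagger\mathrm{diag}(\bm s'\circ\bm s)$, so $\bm Q$ is a global maximizer, and it enjoys local strong concavity by the Hessian bound above.

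I expect this last symmetry and uniqueness step to need the most care. Specifically, one must check that flips map geodesic balls to geodesic balls; this holds since the flips are Riemannian isometries. The constant $c_0$ is not needed in this plan beyond guaranteeing smallness.
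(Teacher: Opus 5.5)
Your proposal is correct and follows essentially the same route as the paper: it uses the same dichotomy from \Cref{lem:dist_twosets} and \Cref{prop:F_classify}, uniqueness of critical points in $\mathcal B_{\rm geo}(\bm Q_\star,\eta)$ via \Cref{lem:local_unique_critical}, and the sign-flip invariance \eqref{eq:FQdiag} to move a global maximizer into the same ball. The differences are cosmetic: you split cases by a nearest point of $\mathcal Q_G$ and transport geodesic balls by the flip isometry, whereas the paper uses $c_0$ to obtain a unique nearest point of $\mathcal Q_{\rm opt}$ and uses Frobenius distances with $\tau$ to place $\widehat{\bm Q}\mathrm{diag}(\bm s)$ in the ball.
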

\begin{proof}
Obviously, it follows from \eqref{eq:final_condition} that \eqref{eq:delta_condition} holds. This, together with \Cref{prop:F_classify}, yields that \eqref{eq1:prop F class} and \eqref{eq2:prop F class} hold. Moreover, using \Cref{lem:dist_twosets} and \eqref{eq:final_condition}, we have \(\mathrm{dist}(\bm{Q}, \mathcal{Q}_G)\le \min\left\{\gamma_1,\gamma_2,c_0,\tau, {\sigma_0}/{3}\right\}\) for any $\bm Q \in \mathcal{Q}_F$. For simplicity, let $r := \min\{\gamma_1,\gamma_2,c_0,\tau,\sigma_0/3\}$. Thus every \( \bm Q\in\mathcal Q_F\) is \(r\)-close to \(\mathcal Q_{\rm opt}\) or to \(\mathcal Q_G\setminus\mathcal Q_{\rm opt}\).
The choice $r\le\sigma_0/3$ separates the two population critical sets when both are nonempty.

\textbf{Case 1.} Suppose that \( \bm{Q} \in \mathcal{Q}_F \) satisfies $\mathrm{dist}(\bm Q,\mathcal{Q}_{G}\setminus\mathcal{Q}_{\rm opt})\le r.$ This, together with \eqref{eq2:prop F class}, implies that $\bm Q$ is a strict saddle point, and thus not a global maximizer.

\textbf{Case 2.} Suppose that \( \bm{Q} \in \mathcal{Q}_F \) satisfies \({\mathrm{dist}\!(\bm Q,\mathcal{Q}_{\rm opt})\!\le\! r}\).
Since \(\mathcal Q_{\mathrm{opt}}\) consists of finitely many isolated points as defined in \eqref{eq:opti}, whose pairwise distances are at least \(2\) due to \(\bm s,\bm t\in\{1,-1\}^K\), and \(r\le c_0=1/2<1\), there exists a unique \(\bm Q_\star\in\mathcal Q_{\mathrm{opt}}\) such that
\[
\|\bm Q_\star-\bm Q\|_F
=
\min\left\{\|\bm U-\bm Q\|_F:\bm U\in\mathcal Q_{\mathrm{opt}}\right\}.
\]
By \(r\le \tau\), we have
\(\bm Q\in\mathcal B_{\rm geo}(\bm Q_\star,\eta)\).
Since \eqref{eq1:prop F class} holds throughout
\(\mathcal B_{\rm geo}(\bm Q_\star,\eta)\),
\Cref{lem:local_unique_critical} shows that
\(\bm Q\) is the unique critical point of \(F\) in this neighborhood.

It remains to show that \(\bm Q\) is a global maximizer. Suppose, for contradiction, that it is not. Since \(\mathrm{St}(d,K)\) is compact and \(F\) is continuous, there exists a global maximizer \(\widehat{\bm Q}\). Then \(\widehat{\bm Q}\in\mathcal Q_F\) and
$    F(\widehat{\bm Q})>F(\bm Q).$
The point \(\widehat{\bm Q}\) cannot fall under \textbf{Case 1}, since every point in \textbf{Case 1} is a strict saddle point. Therefore, $    \operatorname{dist}(\widehat{\bm Q},\mathcal Q_{\mathrm{opt}})\le r.$
Using the same argument as above, there exists a unique $\widehat{\bm Q}_\star\in\mathcal{Q}_{\rm opt}$ such that
\[
\|\widehat{\bm Q}_\star-\widehat{\bm Q}\|_F
=
\min\left\{\|\bm U-\widehat{\bm Q}\|_F:\bm U\in\mathcal{Q}_{\rm opt}\right\}.
\]
By \Cref{prop:opt}, there exists \( \bm{s} \in \{1,-1\}^K \) such that $\bm Q_\star = \widehat{\bm Q}_\star \mathrm{diag}(\bm{s}).$ Moreover, by the definition of \(F\) in \eqref{eq:QPOC1} and \eqref{eq:Ak}, we have
\begin{align}\label{eq:FQdiag}
    F(\bm Z) = F(\bm Z\mathrm{diag}(\bm{s})),\quad \forall \bm Z \in \mathrm{St}(d,K).
\end{align}
Thus, we obtain $F(\widehat{\bm Q})=F(\widehat{\bm Q}\mathrm{diag}(\bm{s})).$ Also, by \( \bm{s} \in \{1,-1\}^K \), we have $\| \bm Q_\star - \widehat{\bm Q} \mathrm{diag}(\bm{s}) \|_F=\|\widehat{\bm Q}_\star-\widehat{\bm Q}\|_F\le r.$
Therefore, \( \widehat{\bm Q}\mathrm{diag}(\bm{s}) \) is also a global maximizer by \eqref{eq:FQdiag} and {$\widehat{\bm Q}\mathrm{diag}(\bm{s}) \in\mathcal B_{\rm geo}(\bm Q_\star,\eta)$ because $r\le\tau$}. Moreover, \( \widehat{\bm Q}\mathrm{diag}(\bm{s})\neq \bm Q \); otherwise, \eqref{eq:FQdiag} would imply $F(\widehat{\bm Q})=F(\bm Q),$ contradicting \(F(\widehat{\bm Q})>F(\bm Q)\). Thus, {$\mathcal B_{\rm geo}(\bm Q_\star,\eta)$} contains two distinct critical points, namely $\bm Q$ and $\widehat{\bm Q}\mathrm{diag}(\bm{s})$, contradicting the uniqueness established above. Hence, $\bm Q$ must be a global maximizer.

Finally, let $\bm Q$ be any global maximizer of \(F\). {{\bf By Case 2}, $\bm Q$ lies near some $\bm Q_\star\in\mathcal Q_{\rm opt}$; applying \eqref{eq1:prop F class} on a smaller neighborhood gives local geodesic strong concavity.} This completes the proof.
\end{proof}

It is worth noting that \cite[Lemma F.14]{gilman2025semidefinite} and \cite[Lemma 3]{wang2023estimation} have established concentration bounds for the deviation of the sample covariance matrix of Gaussian random vectors from its expectation as follows. Recall that $\bm y_{l,i} \sim \mathcal{N}\left(\bm 0, \bm Q^\star { \bLambda} \bm Q^{\star T} + v_l\bm I_d\right),$ as defined in \eqref{eq:HPCA} and let 
\begin{align}\label{eq:Cl}
    \bm C_l := \frac{1}{n}\sum_{i=1}^{n_l} \bm y_{l,i}\bm y_{l,i}^T,\qquad
    \xi_{l} := \frac{\sum_{k=1}^{K}\lambda_{k}+v_{l}d}{\lambda_1+v_l}. 
\end{align}

\begin{lemma}
 \label{lem:delta_lemma}   
For each fixed $l\in[L]$ and each $t>0$, it holds with probability at least $1-e^{-t}$ that
\begin{equation*}
\|\bm C_{l}-\mathbb{E}\left[\bm{C}_{l}\right]\| \leq \frac{c_1 n_l}{n}(\lambda_1+v_l)\max\left\{\sqrt{\frac{\xi_l \log d+t}{n_l}}, \frac{\xi_{l}\log d+t}{n_l}\log n\right\},
\end{equation*}
where  $c_{1}>0$ is a universal constant.
\end{lemma}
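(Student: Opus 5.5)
The proof rewrites $\bm C_l$ as a rescaled sample covariance of i.i.d.\ Gaussian vectors and then applies a dimension-free matrix concentration inequality for sample covariance matrices, stated in terms of the effective rank. Fix $l\in[L]$ and set $\bm\Sigma_l:=\E[\bm y_{l,i}\bm y_{l,i}^T]=\bm Q^\star\bLambda\bm Q^{\star T}+v_l\bm I_d$, which follows from \eqref{eq:HPCA} and the independence of $\bm z_{l,i}$ and $\bm\delta_{l,i}$. Write
\[
\bm C_l-\E[\bm C_l]=\frac{n_l}{n}\Big(\widehat{\bm\Sigma}_l-\bm\Sigma_l\Big),\qquad \widehat{\bm\Sigma}_l:=\frac{1}{n_l}\sum_{i=1}^{n_l}\bm y_{l,i}\bm y_{l,i}^T .
\]
This accounts for the prefactor $n_l/n$, so it remains to bound $\|\widehat{\bm\Sigma}_l-\bm\Sigma_l\|$ with $n_l$ i.i.d.\ samples $\bm y_{l,i}\sim\mathcal N(\bm 0,\bm\Sigma_l)$.

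The next step is to identify the parameters. We have $\|\bm\Sigma_l\|=\lambda_1+v_l$ and $\mathrm{Tr}(\bm\Sigma_l)=\sum_k\lambda_k+v_l d$. Hence the effective rank is $r(\bm\Sigma_l)=\mathrm{Tr}(\bm\Sigma_l)/\|\bm\Sigma_l\|=\xi_l$, exactly as defined in \eqref{eq:Cl}. This shows that the stated bound has the standard form $\|\bm\Sigma\|\max\{\sqrt{(r\log d+t)/n},\,(r\log d+t)\log n/n\}$.

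The route I would take is the matrix Bernstein inequality with a truncation step, as in \cite[Lemma F.14]{gilman2025semidefinite} and \cite[Lemma 3]{wang2023estimation}. Alternatively one could directly invoke Koltchinskii--Lounici, whose bound is even sharper and implies this one; if that result is admissible, the proof is a single line after the identification above. Otherwise I would proceed as follows.

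\begin{enumerate}
\item Control the norms of the samples. Since $\|\bm y_{l,i}\|^2$ is a Gaussian quadratic form with mean $\mathrm{Tr}(\bm\Sigma_l)$, the Hanson--Wright inequality gives $\|\bm y_{l,i}\|^2\lesssim \mathrm{Tr}(\bm\Sigma_l)+\|\bm\Sigma_l\|(\log n+t)$ simultaneously for all $i$, with probability $1-e^{-t}/2$.
\item Apply matrix Bernstein to the truncated centered summands $\bm y\bm y^T\mathbb 1\{\|\bm y\|^2\le R\}$ minus their expectation. These have uniform norm bound $\lesssim R$ and matrix variance $\|\E\|\bm y\|^2\bm y\bm y^T\|\lesssim \mathrm{Tr}(\bm\Sigma_l)\|\bm\Sigma_l\|$.
\item Observe that the bias introduced by truncation is exponentially small in the truncation level, so it can be absorbed into the constant.
\item Combine. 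Bernstein yields a variance term $\|\bm\Sigma_l\|\sqrt{\xi_l(\log d+t)/n_l}$ and a range term $R(\log d+t)/n_l$. With $R\asymp \|\bm\Sigma_l\|(\xi_l+\log n+t)$, the range term is dominated by $\|\bm\Sigma_l\|(\xi_l\log d+t)\log n/n_l$.
\end{enumerate}

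Multiplying by $n_l/n$ and merging the constants into $c_1$ gives the claim.

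The main obstacle is the bookkeeping in step 4: the logarithmic factors must match the form $(\xi_l\log d+t)\log n$ exactly. In particular, the cross term $\xi_l t$ has to be dominated, which relies on $\xi_l\ge1$, and the two tail events must be combined by a union bound without degrading the probability below $1-e^{-t}$. This is handled by adjusting $t$ by a constant and absorbing that change into $c_1$.
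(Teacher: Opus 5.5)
Your reduction is exactly what the statement needs, and it is all the paper does. You write $\bm C_l-\E[\bm C_l]=\frac{n_l}{n}(\widehat{\bm\Sigma}_l-\bm\Sigma_l)$, with $\|\bm\Sigma_l\|=\lambda_1+v_l$ and $\xi_l=\mathrm{Tr}(\bm\Sigma_l)/\|\bm\Sigma_l\|$ the effective rank; the paper gives no argument beyond citing \cite[Lemma F.14]{gilman2025semidefinite} and \cite[Lemma 3]{wang2023estimation}. Your one-line alternative is also sound. The Koltchinskii--Lounici bound $\|\bm\Sigma_l\|\bigl(\sqrt{\xi_l/n_l}\vee\xi_l/n_l\vee\sqrt{t/n_l}\vee t/n_l\bigr)$ implies the stated one as soon as $\log d$ and $\log n$ are bounded below by a constant (e.g.\ $d,n\ge 2$).

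Your self-contained truncation-plus-Bernstein route, however, does not give the bound as stated. It breaks precisely at the step you call bookkeeping.
\begin{itemize}
\item \emph{Variance term.} At confidence $e^{-t}$, matrix Bernstein multiplies the whole level $\log d+t$ by the matrix variance $n_l\|\E[\|\bm y\|^2\bm y\bm y^T]\|\asymp n_l\xi_l\|\bm\Sigma_l\|^2$. Its variance term is therefore $\|\bm\Sigma_l\|\sqrt{\xi_l(\log d+t)/n_l}$. The inequality $\xi_l\ge 1$ points the wrong way ($\xi_l t\ge t$), and $\xi_l t$ is not bounded by any constant times $\xi_l\log d+t$. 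For example, take $v_l\ge\lambda_1$ (so $\xi_l\ge d/2$), $t=d$ and $n_l=n=d^3$. Then this term is of order $\|\bm\Sigma_l\|d^{-1/2}$, while the claimed bound is of order $\|\bm\Sigma_l\|\sqrt{\log d}/d$.
\item \emph{Range term.} It has the same defect. With $R\asymp\|\bm\Sigma_l\|(\xi_l+\log n+t)$, the quantity $R(\log d+t)/n_l$ grows like $t^2/n_l$, whereas the claimed bound grows only linearly in $t$.
\end{itemize}
What your route actually proves is the weaker product form with $\xi_l(\log d+t)$.

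To get the additive $\xi_l\log d+t$, you must separate mean from fluctuation.
\begin{itemize}
\item Bound $\E\|\widehat{\bm\Sigma}_l-\bm\Sigma_l\|$. This is where $\xi_l\log d$ legitimately enters, and truncated Bernstein in expectation form suffices.
\item Control the deviation from the mean by Gaussian concentration. Its scale is set by $\|\bm\Sigma_l\|$ alone, rather than by the matrix variance or by the envelope $\|\bm y\|^2\approx\mathrm{Tr}(\bm\Sigma_l)$. As a result, $t$ enters only through $\sqrt{t/n_l}$ and $t/n_l$, up to a harmless cross term $\sqrt{\xi_l t}/n_l\le(\xi_l+t)/(2n_l)$.
\end{itemize}
Generic tools, such as Bernstein or Talagrand--Adamczak-type inequalities with an envelope term, reintroduce $\xi_l t$. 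The Gaussian argument is exactly the content of Koltchinskii--Lounici, so the citation route is the one to keep.
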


By setting $t=\log n$ in the above lemma and applying the union bound, we obtain the following corollary. 
\begin{corollary}\label{coro:concen}
    It holds with probability at least $1-L/n$ that for all $l \in [L]$,
    \begin{align}\label{eq:cov}
\|\bm C_{l}-\mathbb{E}\left[\bm{C}_{l}\right]\| \le \frac{c_1 n_l(\lambda_1+v_l)}{n}\max\left\{\sqrt{\frac{\xi_l \log d + \log n}{n_l}}, \frac{\xi_{l}\log d + \log n}{n_l}\log n\right\},
\end{align}
where $c_{1}>0$ is a universal constant.
\end{corollary}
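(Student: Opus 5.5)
The corollary follows from \Cref{lem:delta_lemma} by one substitution and a union bound.

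First, fix $l\in[L]$ and apply \Cref{lem:delta_lemma} with the choice $t=\log n$. This choice is admissible as soon as $n\ge 2$, so that $t>0$; the degenerate case $n=1$ forces $L=1$, and the claimed probability $1-L/n=0$ is then trivial. Since $e^{-t}=e^{-\log n}=1/n$, the lemma yields an event $\mathcal E_l$ with $\mathbb P(\mathcal E_l^c)\le 1/n$. On $\mathcal E_l$ we have
\[
\|\bm C_{l}-\mathbb{E}[\bm{C}_{l}]\| \le \frac{c_1 n_l}{n}(\lambda_1+v_l)\max\left\{\sqrt{\frac{\xi_l \log d+\log n}{n_l}}, \frac{\xi_{l}\log d+\log n}{n_l}\log n\right\},
\]
which is exactly \eqref{eq:cov} for this $l$. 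The same universal constant $c_1$ from the lemma is used.

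Second, intersect over groups. The event $\bigcap_{l=1}^L\mathcal E_l$ is precisely the event that \eqref{eq:cov} holds simultaneously for all $l\in[L]$. By the union bound,
\[
\mathbb P\Bigl(\bigcup_{l=1}^L \mathcal E_l^c\Bigr)\le \sum_{l=1}^L \mathbb P(\mathcal E_l^c)\le \frac{L}{n}.
\]
Hence the intersection has probability at least $1-L/n$. No independence across groups is needed, because the union bound holds for arbitrary events.

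There is no real obstacle here. The only points that need care are that $t=\log n$ is a legitimate (positive) choice, and that $c_1$ does not depend on $t$, so the bound takes the stated form without altering the constant.
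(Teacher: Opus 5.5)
Your proposal is correct and is exactly the paper's argument: substitute $t=\log n$ into \Cref{lem:delta_lemma}, so that each group's bound fails with probability at most $1/n$, and then take a union bound over the $L$ groups. Your remark on the degenerate case $n=1$ is a harmless refinement the paper leaves implicit. There $t=\log n$ is not positive, but the claimed probability $1-L/n=0$ makes the statement trivial.
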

Before proceeding, we note that for every $l\in[L]$,
\begin{equation}\label{eq:xi}
    \xi_l = \frac{\sum_{k=1}^K \lambda_k + v_l d}{\lambda_1+v_l}
    \le
    \frac{d\lambda_1+v_l d}{\lambda_1+v_l}
    = d,
\end{equation} 
where the inequality follows from $\sum_{k=1}^K\lambda_k\le K\lambda_1\le d\lambda_1$. Using \Cref{thm:Delta} and \Cref{coro:concen}, we are ready to show the optimization landscape of the HePPCA problem \eqref{eq:QPOC1} when the number of training samples satisfies a suitable lower bound. 

\begin{theorem}\label{thm:theorem5}Let $n_{\min}:=\min_l n_l$ and $v_{\min}:=\min_l v_l$.
Suppose that 
\begin{align}\label{eq:n}
    n_{\min} \ge
    \max\left\{
    \log^2 n,
    \left( \frac{c_1\lambda_1 K}{v_{\min}\delta_3} \right)^2
    \right\}\left(d\log d+\log n\right),
\end{align}
where $\delta_3$ is defined in \Cref{thm:Delta} and $c_1$ in \Cref{coro:concen}. Then, with probability at least $1-L/n$, every \( \bm{Q} \in \mathcal{Q}_F \) is either a global maximizer or a strict saddle point. In particular, if \( \bm Q \) is a global maximizer, then \(F\) is locally geodesically strongly concave near $\bm Q$. 
\end{theorem}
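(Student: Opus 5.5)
The plan is to reduce the statement to \Cref{thm:Delta}. It suffices to show that, on the event of \Cref{coro:concen} (probability at least $1-L/n$), the perturbations satisfy $\sum_{k=1}^K\|\bm\Delta_k\|\le\delta_3$. On that event the deterministic conclusion of \Cref{thm:Delta} applies verbatim: every critical point is a global maximizer or a strict saddle, and $F$ is locally geodesically strongly concave near each global maximizer.

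First, express $\bm\Delta_k$ through the $\bm C_l$. By \eqref{eq:Ak} and \eqref{eq:Cl}, $\bm A_k=\sum_{l}\frac{w_{l,k}}{v_l}\bm C_l$, so
\[
\bm\Delta_k=\sum_{l=1}^L\frac{w_{l,k}}{v_l}\left(\bm C_l-\E[\bm C_l]\right),\qquad \|\bm\Delta_k\|\le\sum_{l=1}^L\frac{w_{l,k}}{v_l}\|\bm C_l-\E[\bm C_l]\|.
\]
Use $w_{l,k}=\lambda_k/(\lambda_k+v_l)\le\lambda_1/(\lambda_1+v_l)$, since $x\mapsto x/(x+v_l)$ is increasing. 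Then the factor $\lambda_1+v_l$ in \eqref{eq:cov} cancels:
\[
\frac{w_{l,k}}{v_l}(\lambda_1+v_l)\le\frac{\lambda_1}{v_{\min}}.
\]

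Second, bound the max term in \eqref{eq:cov} uniformly. By \eqref{eq:xi}, $\xi_l\log d+\log n\le d\log d+\log n=:M$. The assumption $n_{\min}\ge \log^2 n\cdot M$ gives $M/n_l\le 1$ and $\log n\sqrt{M/n_l}\le 1$. Hence
\[
\frac{M}{n_l}\log n=\sqrt{\frac{M}{n_l}}\cdot\log n\sqrt{\frac{M}{n_l}}\le\sqrt{\frac{M}{n_l}},
\]
so the maximum equals the square-root term. The second part of \eqref{eq:n} gives $\sqrt{M/n_l}\le v_{\min}\delta_3/(c_1\lambda_1K)$.

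Combining the two steps,
\[
\|\bm\Delta_k\|\le\sum_{l}\frac{n_l}{n}\cdot\frac{c_1\lambda_1}{v_{\min}}\cdot\frac{v_{\min}\delta_3}{c_1\lambda_1K}=\frac{\delta_3}{K},
\]
using $\sum_l n_l=n$. Summing over $k\in[K]$ yields $\sum_k\|\bm\Delta_k\|\le\delta_3$, which is \eqref{eq:final_condition}, and \Cref{thm:Delta} concludes.

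The main point needing care is the second step, where the $\log^2 n$ factor in \eqref{eq:n} has to dominate the subexponential branch of the max. I expect this to go through as written, using $\log n\ge 1$ when needed. If it fails, the fallback is to bound both branches by the square-root term up to a constant factor absorbed into $c_1$.
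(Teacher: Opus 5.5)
Your proposal is correct and follows essentially the same route as the paper. On the event of \Cref{coro:concen}, you bound $\sum_k\|\bm\Delta_k\|$ using the triangle inequality, $w_{l,k}\le\lambda_1/(\lambda_1+v_l)$, $\xi_l\le d$, and the $\log^2 n$ part of \eqref{eq:n} to select the square-root branch, then invoke \Cref{thm:Delta}; your per-$k$ bound $\delta_3/K$ is just a reordering of the paper's summation. Your worry about the second branch is unfounded, since $(M/n_l)\log n\le\sqrt{M/n_l}$ is equivalent to $n_l\ge M\log^2 n$, so no fallback or change to $c_1$ is needed.
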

\begin{proof}
Suppose that \eqref{eq:cov} holds for each $l \in [L]$, which happens with probability at least $1-L/n$. According to \eqref{eq:Delta} and \eqref{eq:Cl}, we have
\[\|\bm \Delta_k\|= \left\|\sum_{l\in[L]} \frac{w_{l,k}}{v_l}(\bm C_l-\E[\bm C_l])\right\|.
\]
This implies 
\begin{align*} 
    \sum_{k=1}^K \|\bm \Delta_k\|
    & \le \sum_{k=1}^K\sum_{l=1}^L \frac{w_{l,k}}{v_l}\left\| \bm C_l -\mathbb{E}[\bm C_l] \right\| \\
    & \le \frac{c_1}{n} \sum_{k=1}^K\sum_{l=1}^L \frac{w_{l,k}n_l}{v_l}(\lambda_1+v_l)
    \max\left\{\sqrt{\frac{\xi_l \log d + \log n}{n_l}}, \frac{\xi_{l}\log d + \log n}{n_l}\log n\right\} \\
    & \le \frac{c_1 \lambda_1 K}{n} \sum_{l=1}^L \frac{ n_l}{v_l}
    \max\left\{\sqrt{\frac{d \log d + \log n}{n_l}}, \frac{d \log d + \log n}{n_l}\log n\right\},
\end{align*}
where the second inequality uses \eqref{eq:cov}, the third inequality follows from $w_{l,k} \le \lambda_1/(\lambda_1 + v_l)$ by \eqref{eq:Ak} and \eqref{eq:xi}. 
Using the above inequality and $n_l\ge n_{\min} \ge (d\log d + \log n)\log^2 n$ from \eqref{eq:n} yields
\begin{align*}
    \sum_{k=1}^K \|\bm \Delta_k\|
    & \le \frac{c_1\lambda_1 K}{n} \sum_{l=1}^L \frac{n_l}{v_l}\sqrt{\frac{d \log d + \log n}{n_l}} \le \frac{c_1\lambda_1 K}{v_{\min}}\sqrt{\frac{d \log d + \log n}{n_{\min}}}.   
\end{align*}
This, together with \eqref{eq:n}, implies $\sum_{k=1}^K \| \bm \Delta_k \| \le \delta_3.$ It follows from this and \Cref{thm:Delta} that the desired result holds.
\end{proof}

\section{\texorpdfstring{Experimental Results}{Experimental Results}}\label{sec:exp} 
 
In this section, we conduct numerical experiments on the QPOC and HePPCA problems to illustrate the convergence behavior predicted by our theoretical results.
\subsection{Benign optimization landscape of QPOC Problems}\label{subsec:QPOC}
\begin{figure*}[t]
\begin{center}
	\begin{subfigure}{0.32\textwidth}
    	\centering\includegraphics[width = 1\linewidth]{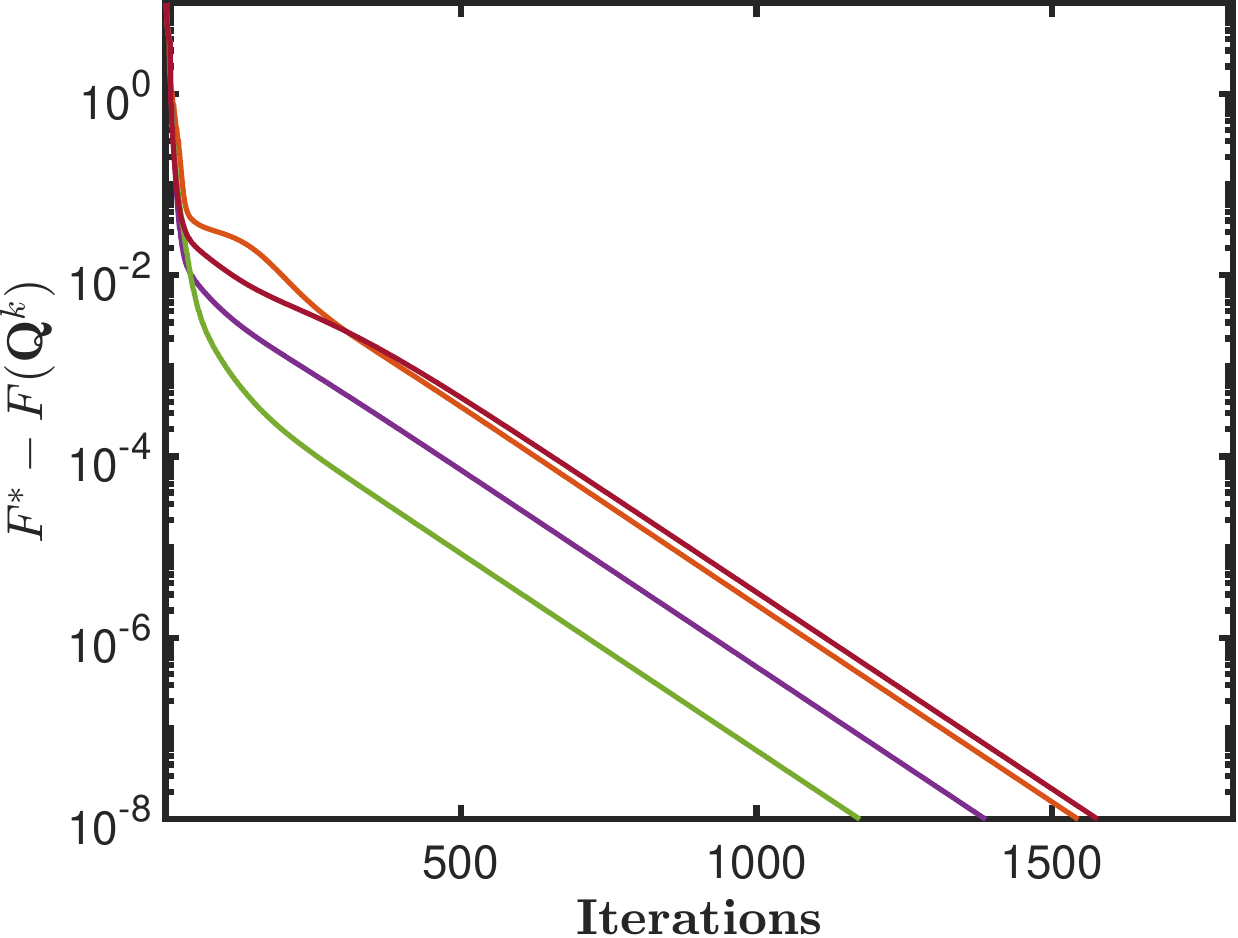}  
    \caption{$d=100, K=20$} 
    \end{subfigure} \hfill
    \begin{subfigure}{0.32\textwidth}
    	\centering\includegraphics[width = 1\linewidth]{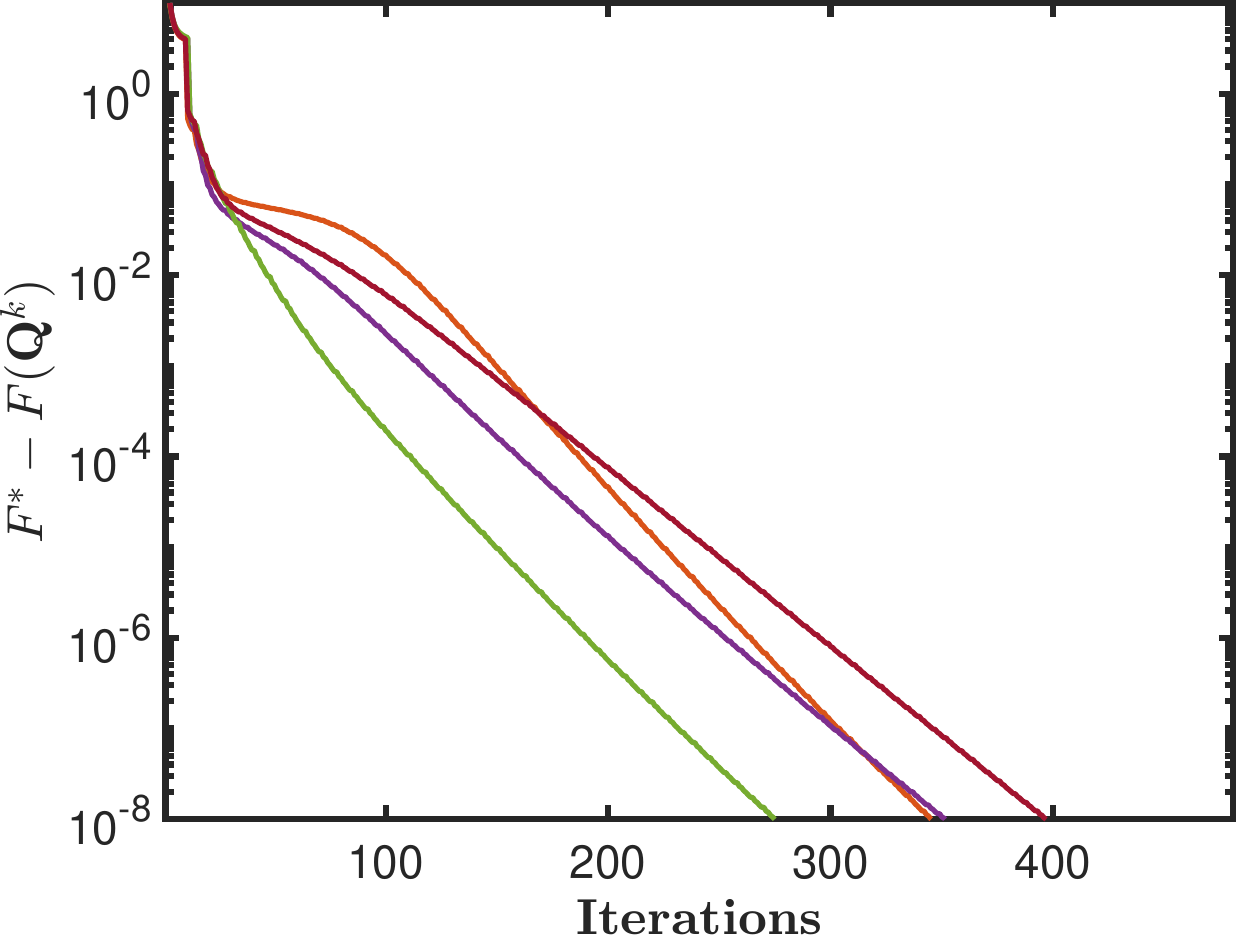}  
    \caption{$d=100, K=40$} 
    \end{subfigure}\hfill
    \begin{subfigure}{0.32\textwidth}
    	\centering\includegraphics[width = 1\linewidth]{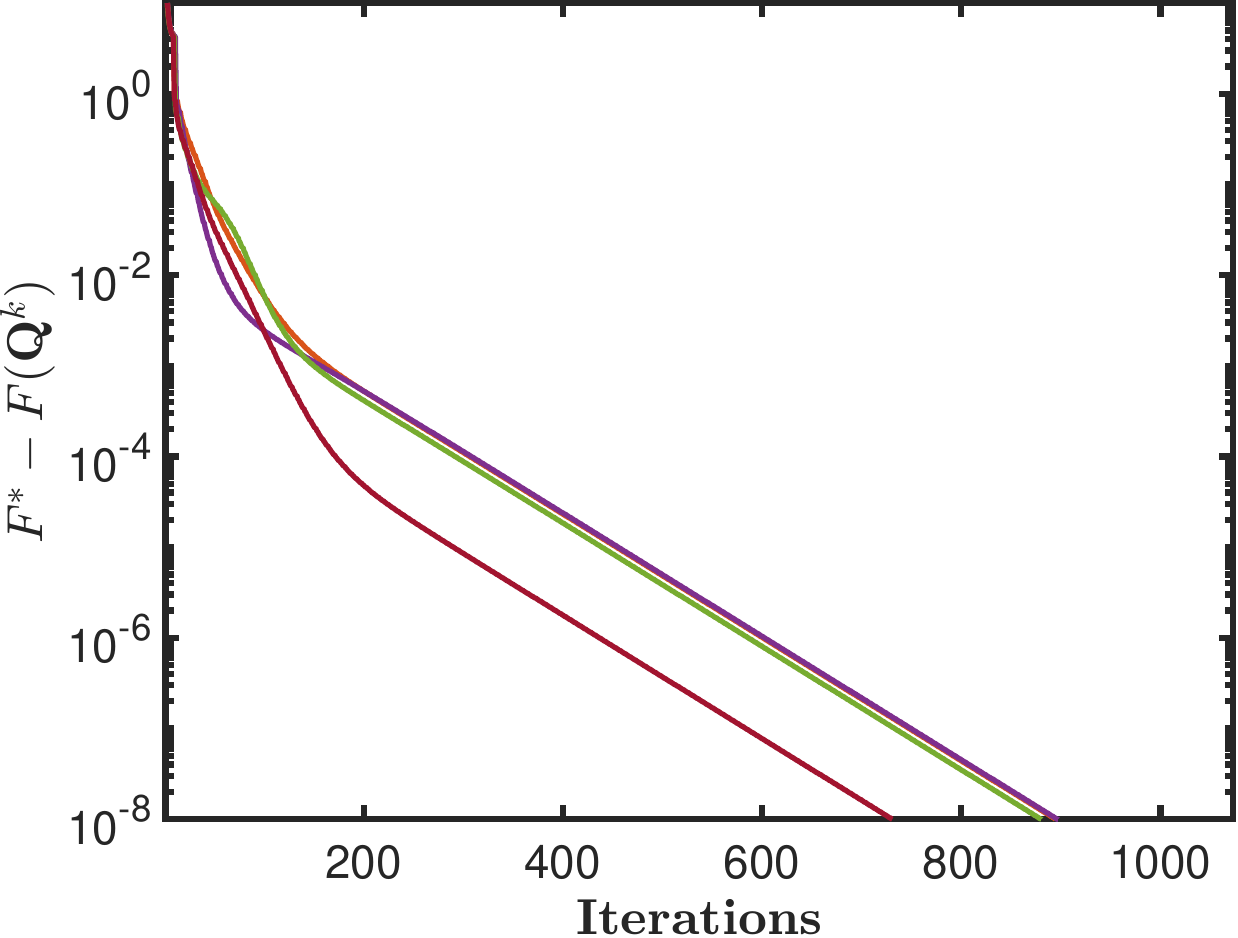}  
    \caption{$d=100, K=60$} 
    \end{subfigure}
    \caption{\textbf{Convergence performance of Riemannian GA for solving QPOC.} The $x$-axis is number of iterations $k$, and $y$-axis is the function value gap $F^*-F^k$. Here, \(F^k:=F(\bm Q^k)\) denotes the function value at the \(k\)-th iterate \(\bm Q^k\) generated by Riemannian GA, and \(F^*\) is the optimal value of Problem~\eqref{eq:QPOC}. 
    } 
    \label{fig:1}
\end{center}
\end{figure*}

In this subsection, we conduct numerical experiments on synthetic data to validate the theoretical results in \Cref{thm:land}. To this end, we apply Riemannian gradient ascent (RGA) to solve {Problem~\eqref{eq:QPOC}} and examine whether it converges to a global maximizer from different random initializations. For each random initialization, we generate a Gaussian random matrix $\bm Z \in \R^{d\times K}$ and compute its compact SVD $\bm Z = \bm U \bm \Sigma \bm V^T$, where $\bm U \in \mathrm{St}(d,K)$ and $\bm V \in \mathcal{O}^{K}$. We then set $\bm Q^0 = \bm U \bm V^T$.

In our experiments, we set $d=100$ and $K \in \{20,40,60\}$, and generate the matrices $\bm A \in \S^{d}$ and $\bm B \in \S^K$ as follows. Specifically, we generate a vector $\bm a \in \R^d$ by sampling its entries independently from the uniform distribution on $[5,10]$ and then sorting them in descending order. Similarly, we generate a vector $\bm b \in \R^K$ by sampling the first $K/2$ entries from the uniform distribution on $[1,1.1]$ and the remaining entries from the uniform distribution on $[-1,-0.9]$, after which the entries are sorted in descending order. We then set $\bm A = \mathrm{diag}(\bm a)$ and $\bm B = \mathrm{diag}(\bm b)$.
 
Based on the above setup, we run RGA from four different random initializations and visualize the convergence behavior in terms of the optimality gap \(F^*-F^k\) for solving {Problem~\eqref{eq:QPOC}}; see \Cref{fig:1}. {Here, \(F^k:=F(\bm Q^k)\) denotes the function value at the \(k\)-th iterate \(\bm Q^k\) generated by RGA, and \(F^*\) is the optimal value of {Problem~\eqref{eq:QPOC}}, computed according to \eqref{eq:FQ}.} It can be observed that, regardless of the random initialization, RGA consistently converges to a global maximizer at a linear rate. This behavior empirically supports the theoretical results in \Cref{thm:land} regarding the benign optimization landscape of {Problem~\eqref{eq:QPOC}}.

\subsection{Experiments on HePPCA}

In this subsection, we conduct numerical experiments on synthetic data to validate the theoretical results in  \Cref{thm:theorem5}. First, we generate the samples $\{\bm y_{l,i}\}$ according to the HePPCA model \eqref{eq:HPCA}. Specifically, we set the total dimension $d=20$, the subspace dimension $K = 3$, the number of groups $L=2$, $\bm \lambda = (3, 2, 0.5)$ and $\bm v = (0.2, 0.1)$. The ground-truth subspace matrix $\bm Q^\star \in \mathrm{St}(d,K)$ is generated by first sampling a Gaussian random matrix $\bm G \in \mathbb{R}^{d\times K}$ with i.i.d. standard normal entries and then computing its QR decomposition $\bm G = \bm Q \bm R$, where $\bm Q \in \mathrm{St}(d,K)$ and $\bm R\in \mathbb{R}^{K\times K}$ is upper triangular. Then, we set $\bm Q^\star = \bm Q$, which has orthonormal columns. Next, we generate three independent sets of samples $\{\bm y_{l,i}\}$ and construct the corresponding data matrices $\{\bm A_k\}_{k=1}^K$ according to \eqref{eq:Ak}.

Based on the above setup, we run RGA from four different random initializations, as described in \Cref{subsec:QPOC}, and stop the algorithm when the Riemannian gradient norm is smaller than $10^{-6}$. 
{We first visualize the convergence performance of Riemannian GA for solving Problem~\eqref{eq:QPOC1} in terms of $\hat F-F^k$ in \Cref{fig:2}, where $F^k=F(\bm Q^k)$ denotes the function value at the $k$-th iterate $\bm Q^k$, and $\hat F$ denotes the function value at the final iterate.} 
It can be observed that RGA appears to converge to an optimal solution, reaching the same function value across different runs with a linear rate. This behavior empirically supports the theoretical results in \Cref{thm:theorem5} regarding the benign optimization landscape of Problem \eqref{eq:QPOC1}.

\begin{figure*}[t]
\begin{center}
	\begin{subfigure}{0.32\textwidth}
    	\centering\includegraphics[width = 1\linewidth]{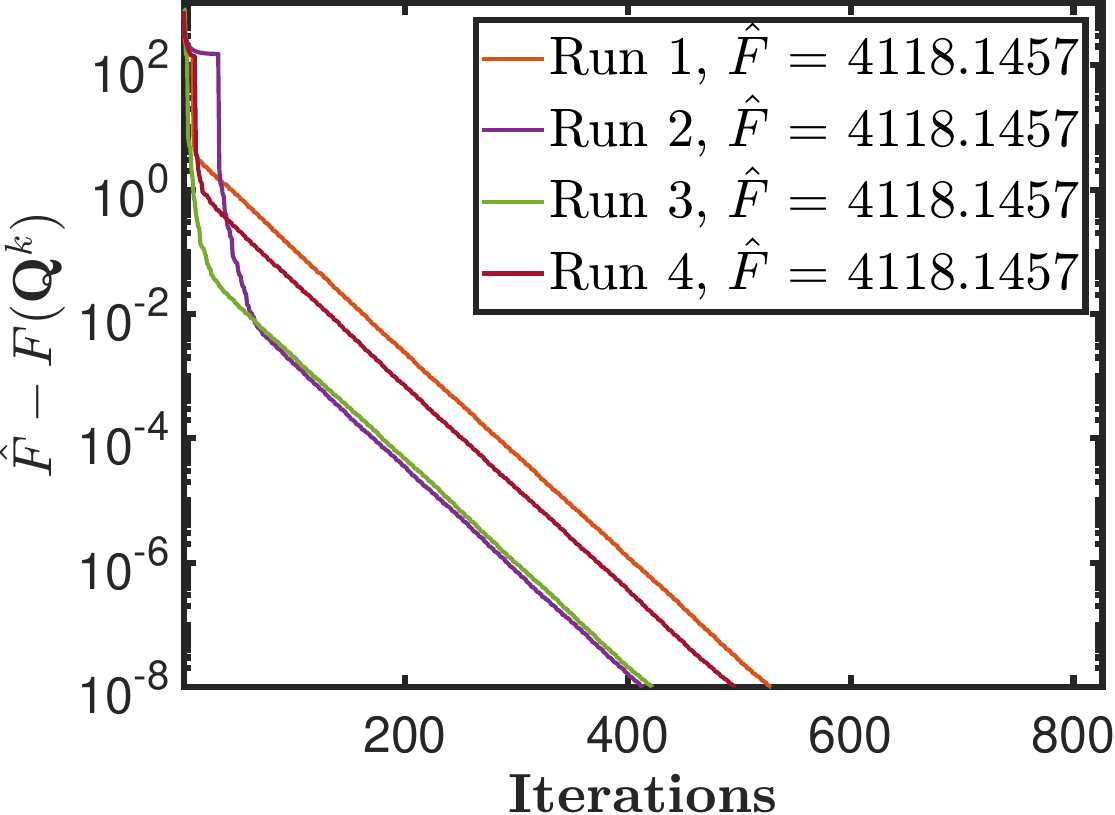}  
    \caption{First set of samples} 
    \end{subfigure} 
    \begin{subfigure}{0.32\textwidth}
    	\centering\includegraphics[width = 1\linewidth]{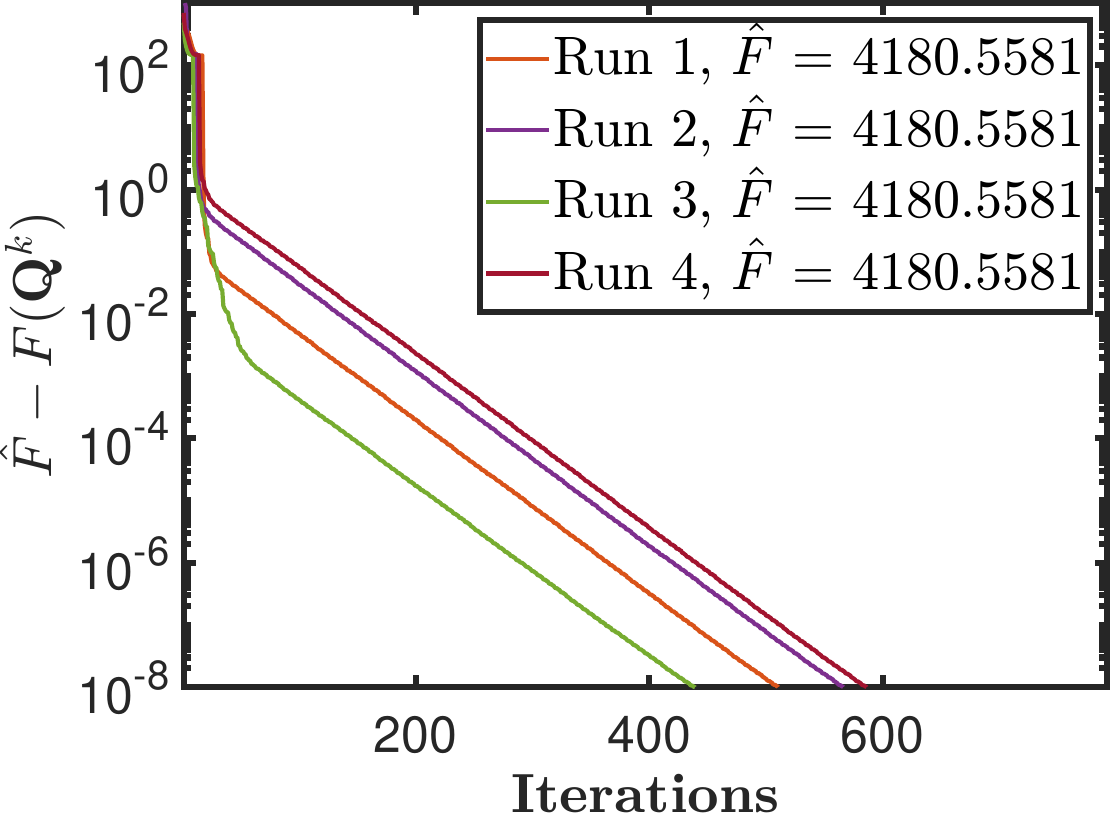}  
    \caption{Second set of samples} 
    \end{subfigure} 
    \begin{subfigure}{0.32\textwidth}
    	\centering\includegraphics[width = 1\linewidth]{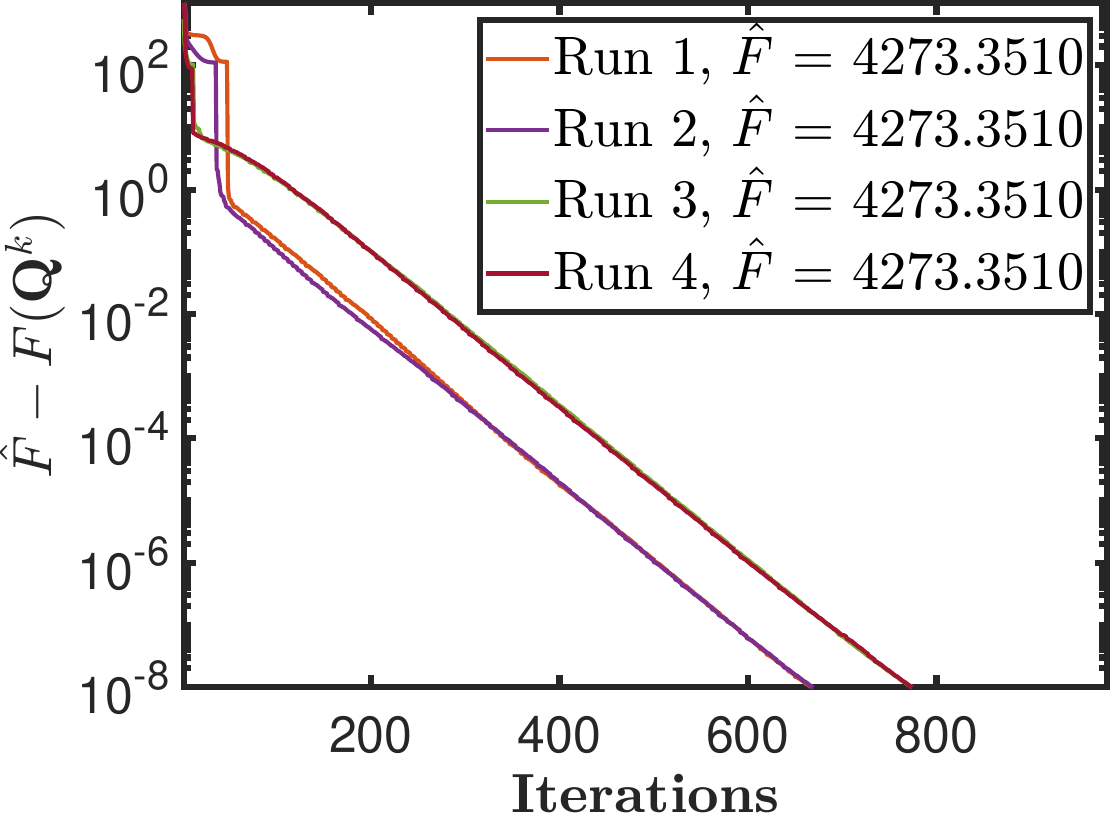}  
    \caption{Third set of samples} 
    \end{subfigure}
    \caption{\textbf{Convergence performance of Riemannian GA for solving HePPCA.} The $x$-axis is number of iterations $k$, and $y$-axis is the function value gap $\hat F - F^k$. Here, $F^k=F(\bm Q^k)$ denotes the function value at the $k$-th iterate $\bm Q^k$, and $\hat F$ denotes the function value at the final iterate. 
    }
    \label{fig:2}
\end{center}
\end{figure*}

\section{Conclusion}\label{sec:con}

In this work, we establish a complete characterization of the optimization landscape of QPOC by showing that every critical point is either a global maximizer or a strict saddle point. As an application, we show that the population objective of HePPCA is a special instance of QPOC and enjoys both a benign landscape and local geodesic strong concavity near every global maximizer. We further prove that, under a sufficiently large groupwise sample-size condition, the finite-sample HePPCA objective inherits these properties with high probability, thereby supporting the observed local linear convergence of retraction-based methods. A natural future direction is to extend the benign landscape analysis to other manifold optimization problems arising in modern machine learning, such as maximal coding rate reduction \cite{chan2022redunet} and neural collapse \cite{yaras2022neural}. Another interesting direction is to investigate nonhomogeneous QPOC problems that include an additional linear term in the objective.

\bibliographystyle{abbrvnat}
\bibliography{reference}

\end{document}